\DeclareMathOperator{\im}{im}
\DeclareMathOperator{\Pic}{Pic}
\DeclareMathOperator{\Spec}{Spec}
\DeclareMathOperator{\fieldchar}{char}
\DeclareMathOperator{\HH}{H}
\DeclareMathOperator{\Hom}{Hom}
\DeclareMathOperator{\Aut}{Aut}
\newcommand{\mg}[1]{\mathcal{M}_{#1}}
\newcommand{\mgbar}[1]{\overline{\mathcal{M}}_{#1}}
\newtheorem{theorem}{Theorem}[section]
\newtheorem{lemma}[theorem]{Lemma}
\newtheorem{proposition}[theorem]{Proposition}
\newtheorem{corollary}[theorem]{Corollary}
\newtheorem*{theorem*}{Theorem}
\theoremstyle{definition}
\newtheorem{definition}[theorem]{Definition}
\newtheorem{note}[theorem]{Note}
\newtheorem{example}[theorem]{Example}
\newtheorem{remark}[theorem]{Remark}
\title[Brauer groups of stacky curves]{Brauer groups of tame stacky curves and their $\mu_r$-gerbes}
\author{Martin Bishop}
\date{}
\begin{document}
\begin{abstract}
    We fit the Brauer group of a $\mu_r$-gerbe over
    a (possibly arbitrarily singular) tame stacky curve into an exact sequence and give characterizations for
    when it is short exact and conditions for when it splits. We also
    give a precise formula for the Brauer group of a $\mu_r$-gerbe
    in the case that the base curve is smooth.
    \end{abstract}

\maketitle

\section{Introduction}
To date, a significant portion of the current literature on Brauer groups
has been dedicated to the following picture:
given a specific smooth stacky curve $\mathcal C$ and a specific
$\mu_r$-gerbe $\mathcal G$ over $\mathcal C$,
compute the Brauer group of $\mathcal G$. See for instance
\cite{AM20}, with further progress in \cite{DLP25, Shi19}, which studied the Brauer group of
$\mg{1,1}$ -- a $\mu_2$-gerbe over a stacky curve -- and laid the foundations
for modern work on Brauer groups of stacks. See also \cite{ABJJL24}, which studied the
Brauer group of $\mathcal Y_0(2)$, also a $\mu_2$-gerbe over a stacky curve.
Most work has focused, very successfully, on generalizing the
base scheme $S$ for a fixed $\mathcal C$ and $\mathcal G$.

In this paper, we instead approach this problem from the opposite direction: we fix our base
$S$ as an algebraically closed field, and then allow $\mathcal C$ to be an arbitrary tame
stacky curve and $\mathcal G\rightarrow\mathcal C$ to be an arbitrary $\mu_r$-gerbe, with no smoothness
requirements and no requirements on the possible (tame) stabilizers.
To this end, our primary results are: complete computations
of the Brauer groups of tame stacky curves; complete computations of
the Brauer groups of $\mu_r$-gerbes over \textit{smooth} tame stacky curves; and
an exact sequence for the Brauer group of a $\mu_r$-gerbe over
a tame stacky curve along with precise conditions for when it is
short-exact and conditions for when it splits.

The main inspiration for, and the fundamental technical basis of, this work is the results
in the note \cite{Mei18}. Inspiration was also drawn from \cite{AM20, Lie11, Lop23, Shi21}.
While preparing this paper, the author learned of the work \cite{Ach24} which
uses a similar approach (leveraging the results of Section \ref{Meier section}
in conjunction with spectral sequences), although the papers operate on different levels of generality.
The paper \cite{Ach24} makes weaker assumptions on the base stack and the \'{e}tale-ness of stabilizers,
offering techniques which work for a wide variety of stacks provided they are \textit{locally Brauerless}
(\cite[Definition 4.13]{Ach24}). In the case of abelian stabilizers, this implies that the \'{e}tale
part must be cyclic (\cite[Remark 4.10]{Ach24}).

In contrast, this paper works in the more restrictive setting
of one-dimensional Deligne-Mumford stacks over algebraically closed fields, but with these stronger
assumptions is able to state results for stacks which are highly singular and \textit{not} locally Brauerless
(see Example \ref{stacky Tsen example} for an interesting example of a stack which is not
locally Brauerless).
In short,
we insist that our stabilizers must be tame and \'{e}tale, but impose no other conditions. Whereas
being locally Brauerless allows for non-\'{e}tale stabilizers but places strong requirements on
the tame \'{e}tale part. In this way, these two papers work in complementary levels of generality.
The research in this paper was conducted independently
of \cite{Ach24}.

\subsection{Main results}\label{main results}
Our main result is the following thorough classification of Brauer groups of $\mu_r$-gerbes over
tame stacky curves. Note that
here $\mathcal C$ is allowed to be arbitrarily singular.

\begin{theorem*}[Theorem \ref{main Brauer statement}]
Suppose that $\mathcal C$ is a tame separated one-dimensional Deligne-Mumford stack
with trivial generic stabilizer
of finite type over an
algebraically closed field $\mathbbm k$. Let
$f:\mathcal C\rightarrow C$ be the coarse moduli space of $\mathcal C$.
Suppose that $p_1,\dots,p_n$ are the geometric points
in $C$ whose preimage in $\mathcal C$ have non-trivial stabilizers $G_1,\dots,G_n$.
Let $\pi:\mathcal G\rightarrow\mathcal C$ be a
$\mu_r$-gerbe, and denote the stabilizers in $\mathcal G$ over $p_1,\dots,p_n$ as $E_1,\dots,E_n$.
Let $G_i$ and $E_i$ act trivially on $\mathbbm k^{\times}$ for each $i$.
Then the following sequence is exact, where the left cohomology is group cohomology and the right two
are \'{e}tale cohomology:
$$
\bigoplus_{i=1}^n\HH^2(G_i,\mathbbm k^{\times})\rightarrow
\HH^2(\mathcal G,\mathbb G_m)\rightarrow
\HH^1(\mathcal C,\mathbb Z/r).
$$
Moreover,
\begin{enumerate}
\item the left map is injective if and only if $\mathcal G$ is a root gerbe over $\mathcal C$,
\item $\mathcal G$ is a root gerbe over $\mathcal C$ if and only if it is a root gerbe on each fiber
$BE_i\rightarrow BG_i$,
\item the right map is surjective if and only if the inflation map
$$
\inf:\HH^3(G_i,\mathbbm k^{\times})\rightarrow\HH^3(E_i,\mathbbm k^{\times})
$$
is injective for all $i=1,\dots,n$,
\item if $\gcd(r,\left|G_i\right|)=1$ for all $i=1,\dots,n$, then:
\begin{enumerate}
\item $\pi:\mathcal G\rightarrow\mathcal C$
is automatically a root gerbe,
\item the inflation map is automatically injective, and
\item we have the following splitting
$$
\HH^2(\mathcal G,\mathbb G_m)=\HH^1(C,\mathbb Z/r)\oplus\bigoplus_{i=1}^n\HH^2(G_i,\mathbbm k^{\times}),
$$
\end{enumerate}
\item more generally,
in the situation that the sequence is right-exact, to show that the sequence splits it suffices
to show that on each fiber $BE_i\rightarrow BG_i$ there is a section
$$
\begin{tikzcd}
\HH^2(G_i,\mathbbm k^{\times})\arrow[r, "\inf"] & \HH^2(E_i,\mathbbm k^{\times})\arrow[l, dashed,
bend right].
\end{tikzcd}
$$
\end{enumerate}
\end{theorem*}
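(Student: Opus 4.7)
The plan is to build the exact sequence from two Leray-type spectral sequences, then identify each of its assertions with a condition on a single differential. The primary input is the Leray spectral sequence
$$E_2^{p,q}=\HH^p(\mathcal C, R^q\pi_*\mathbb G_m)\Longrightarrow \HH^{p+q}(\mathcal G,\mathbb G_m)$$
for $\pi:\mathcal G\to\mathcal C$. Because $\pi$ is a $\mu_r$-gerbe and $\mathbbm k$ is algebraically closed, the stalks of $R^q\pi_*\mathbb G_m$ are $\HH^q(B\mu_r,\mathbb G_m)$, which equal $\mathbb G_m,\,\mathbb Z/r,\,0$ for $q=0,1,2$. Vanishing of $E_2^{0,2}$ extends the usual five-term sequence by one step and produces
$$\HH^0(\mathcal C,\mathbb Z/r)\xrightarrow{d_2}\HH^2(\mathcal C,\mathbb G_m)\xrightarrow{\pi^*}\HH^2(\mathcal G,\mathbb G_m)\to\HH^1(\mathcal C,\mathbb Z/r)\xrightarrow{d_2}\HH^3(\mathcal C,\mathbb G_m)$$
in which $d_2(1)=[\mathcal G]$. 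Substituting Meier's spectral sequence for $f:\mathcal C\to C$ (Section~\ref{Meier section}), whose $R^qf_*\mathbb G_m$ are skyscrapers at $\{p_i\}$ with stalks $\HH^q(G_i,\mathbbm k^{\times})$ for $q>0$, and using Tsen's theorem $\HH^2(C,\mathbb G_m)=0$, gives $\HH^2(\mathcal C,\mathbb G_m)\cong\bigoplus_i\HH^2(G_i,\mathbbm k^{\times})$ and produces the three-term exact sequence claimed.

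Parts (1) and (2) then follow directly. The kernel of $\pi^*$ is the cyclic subgroup generated by $[\mathcal G]$, so the left map is injective iff $[\mathcal G]=0$, i.e.\ iff $\mathcal G\to\mathcal C$ is a root gerbe. Meier's identification is natural under the residual-gerbe inclusions $BG_i\hookrightarrow\mathcal C$, and the $i$-th component of $[\mathcal G]$ is the image under $\mu_r\hookrightarrow\mathbb G_m$ of the extension class classifying $BE_i\to BG_i$; so $[\mathcal G]$ vanishes iff every fibre $BE_i\to BG_i$ is a root gerbe.

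For (3), I would compare the global $d_2:\HH^1(\mathcal C,\mathbb Z/r)\to\HH^3(\mathcal C,\mathbb G_m)$ with the Lyndon--Hochschild--Serre differential of $1\to\mu_r\to E_i\to G_i\to 1$. The local $d_2:\HH^1(G_i,\mathbb Z/r)\to\HH^3(G_i,\mathbbm k^{\times})$ is cup product with $[E_i]$ and its image is exactly $\ker(\inf:\HH^3(G_i,\mathbbm k^{\times})\to\HH^3(E_i,\mathbbm k^{\times}))$. Naturality of Leray under the cartesian squares $BE_i\to\mathcal G$ over $BG_i\to\mathcal C$, combined with Meier's description of $\HH^3(\mathcal C,\mathbb G_m)$, identifies the global $d_2$ componentwise with these local ones, making the right map surjective iff every inflation is injective.

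Finally, for (4), coprimality and triviality of the $G_i$-action on $\mathbb G_m$ make the central extensions $1\to\mu_r\to E_i\to G_i\to 1$ split by Schur--Zassenhaus, so $E_i\cong\mu_r\times G_i$; this immediately gives (a) via (2), (b) by collapse of the LHS spectral sequence, and (c) by canonical splitting via product projections (using also $\HH^1(\mathcal C,\mathbb Z/r)=\HH^1(C,\mathbb Z/r)$, since $R^{>0}f_*\mathbb Z/r=0$ under coprimality). For (5), a section $\HH^2(E_i,\mathbbm k^{\times})\to\HH^2(G_i,\mathbbm k^{\times})$ of each local inflation, together with Meier's spectral sequence applied to $f\pi:\mathcal G\to C$ and naturality under $BE_i\hookrightarrow\mathcal G$, assembles into a retraction $\HH^2(\mathcal G,\mathbb G_m)\to\bigoplus_i\HH^2(G_i,\mathbbm k^{\times})$ of the left map. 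The hardest step is confirming the componentwise decomposition of the global $d_2$ into the local Lyndon--Hochschild--Serre differentials; this requires controlling the possibly-nontrivial $\HH^{\geq 2}(C,\mathbb G_m)$ when $C$ is singular and leans heavily on the naturality of Meier's spectral sequence under residual-gerbe inclusions.
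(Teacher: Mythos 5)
Most of your outline tracks the paper's proof: the three-term sequence itself, parts (1), (2) and (5) are obtained essentially as in the paper (Leray for $\pi$ with $\mathbf R^2\pi_*\mathbb G_m=0$, Meier's skyscraper description of $\mathbf R^qf_*\mathbb G_m$, Tsen's theorem, the description of $d_2$ on $E_2^{0,1}$ as pushforward of $[\mathcal G]$, and restriction to the residual gerbes), and your Schur--Zassenhaus argument for (4) is a clean alternative to the paper's torsion argument. Two small imprecisions: in (1) the left map is injective iff the image of $[\mathcal G]$ in $\HH^2(\mathcal C,\mathbb G_m)$ vanishes, not iff $[\mathcal G]=0$ in $\HH^2(\mathcal C,\mu_r)$; and $\HH^{\geq 2}(C,\mathbb G_m)$ is not ``possibly nontrivial'' for singular $C$ --- it vanishes by Tsen's theorem for arbitrary one-dimensional schemes, so that is not where the difficulty lies.

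The genuine gap is in your part (3), in the direction ``right map surjective $\Rightarrow$ all inflations injective.'' Naturality of Leray under $BG_i\hookrightarrow\mathcal C$ gives a commutative square relating the global $d_2\colon\HH^1(\mathcal C,\mathbb Z/r)\to\HH^3(\mathcal C,\mathbb G_m)\cong\bigoplus_i\HH^3(G_i,\mathbbm k^{\times})$ to the local Lyndon--Hochschild--Serre differentials $d_2^{(i)}\colon\HH^1(G_i,\mathbb Z/r)\to\HH^3(G_i,\mathbbm k^{\times})$, and hence only the containment $\im(d_2)\subseteq\bigoplus_i\im\bigl(d_2^{(i)}\bigr)=\bigoplus_i\ker(\inf_i)$. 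The equality you assert --- which is what the ``only if'' direction needs --- additionally requires the restrictions $\HH^1(\mathcal C,\mathbb Z/r)\to\HH^1(G_i,\mathbb Z/r)$ to be surjective. They need not be: the Leray sequence for $f$ shows the cokernel of $\HH^1(\mathcal C,\mathbb Z/r)\to\bigoplus_i\HH^1(G_i,\mathbb Z/r)$ injects into $\HH^2(C,\mathbb Z/r)$, which is nonzero for proper $C$; concretely, for a teardrop $\mathbb P^1$ with a single $B\mu_n$ point and $r=n$ one has $\HH^1(\mathcal C,\mathbb Z/n)=0$ while $\HH^1(\mu_n,\mathbb Z/n)\neq0$. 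So vanishing of the global $d_2$ does not, by your argument, force vanishing of the local ones. The paper sidesteps this entirely: it uses $E_2^{0,2}=0$ to extend the low-degree sequence one more term, converting surjectivity of the right map into injectivity of $\pi^*\colon\HH^3(\mathcal C,\mathbb G_m)\to\HH^3(\mathcal G,\mathbb G_m)$, and then analyzes that map via the Leray spectral sequence of the composite $g\colon\mathcal G\to C$ (Proposition \ref{H^3 injectivity} together with Lemma \ref{R^3 injection}), where source and target are global sections of skyscraper sheaves at the $p_i$ and the relevant map is $\bigoplus_i\inf_i$ on the nose. You should either adopt that route or supply an argument replacing the missing surjectivity.
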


In the case where $\mathcal C$ is smooth, this vastly simplifies into the following
(the interested reader can find more cases in Section \ref{Leray for pi}).

\begin{theorem*}[Corollary \ref{weak smooth corollary}]
Suppose that $\mathcal C$ is a smooth separated tame one-dimensional Deligne-Mumford stack
with trivial generic stabilizer
of finite type over an algebraically closed field $\mathbbm k$.
Let $\pi:\mathcal G\rightarrow\mathcal C$ be a $\mu_r$-gerbe.
Then
$$
\HH^2(\mathcal G,\mathbb G_m)=\HH^1(\mathcal C,\mathbb Z/r).
$$
\end{theorem*}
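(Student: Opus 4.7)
The plan is to apply Theorem~\ref{main Brauer statement} and show that, under the smoothness hypothesis on $\mathcal{C}$, the three-term exact sequence collapses to an isomorphism. The key structural input is that smoothness forces every stabilizer $G_i$ to be cyclic: \'{e}tale-locally at a stacky point, a smooth tame one-dimensional Deligne--Mumford stack with trivial generic stabilizer looks like $[\mathbb{A}^1 / G_i]$ with $G_i$ acting faithfully and linearly (tameness providing the linearization), so $G_i \hookrightarrow \mathrm{GL}_1(\mathbbm{k}) = \mathbb{G}_m$ must be cyclic.

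With cyclicity in hand, the left-hand term of the sequence vanishes: since $\mathbbm{k}$ is algebraically closed and $|G_i|$ is invertible in $\mathbbm{k}$, the Schur multiplier of a cyclic group with trivial action on $\mathbbm{k}^{\times}$ is
$$
H^2(G_i, \mathbbm{k}^{\times}) = \mathbbm{k}^{\times} / (\mathbbm{k}^{\times})^{|G_i|} = 0.
$$
Theorem~\ref{main Brauer statement} thus reduces to the injection $H^2(\mathcal{G}, \mathbb{G}_m) \hookrightarrow H^1(\mathcal{C}, \mathbb{Z}/r)$.

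To upgrade this injection to an isomorphism, I would invoke criterion~(3) of the main theorem, which demands injectivity of the inflation $H^3(G_i, \mathbbm{k}^{\times}) \to H^3(E_i, \mathbbm{k}^{\times})$ for every $i$. The cleanest route is via integral homology: since $\mathbbm{k}^{\times}$ is divisible, the universal coefficient theorem collapses to $H^n(-, \mathbbm{k}^{\times}) = \mathrm{Hom}(H_n(-, \mathbb{Z}), \mathbbm{k}^{\times})$, so it suffices to show that $H_n(E_i, \mathbb{Z}) \to H_n(G_i, \mathbb{Z})$ is surjective. Lifting a generator of the cyclic group $G_i$ to an element $e \in E_i$, the cyclic subgroup $\langle e \rangle \subseteq E_i$ surjects onto $G_i$, and a surjection of finite cyclic groups induces surjections on integral group homology in every degree (by naturality of the standard periodic resolution). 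The surjection $H_n(E_i, \mathbb{Z}) \twoheadrightarrow H_n(G_i, \mathbb{Z})$ then factors through $H_n(\langle e \rangle, \mathbb{Z})$. The main difficulty in this plan is exactly verifying criterion~(3): the cyclic-subgroup trick is what lets us sidestep a direct analysis of the Hochschild--Serre spectral sequence of the (possibly non-split) central extension $1 \to \mu_r \to E_i \to G_i \to 1$, where one would otherwise have to control a potentially non-trivial $d_2$ differential given by cup product with the extension class.
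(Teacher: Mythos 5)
Your setup is fine: smoothness does force each $G_i$ to be cyclic, so $\HH^2(G_i,\mathbbm k^{\times})=0$ and the left-hand term of the sequence in Theorem \ref{main Brauer statement} vanishes; and the universal coefficient reduction correctly translates injectivity of $\inf\colon\HH^3(G_i,\mathbbm k^{\times})\to\HH^3(E_i,\mathbbm k^{\times})$ into surjectivity of $\HH_3(E_i,\mathbb Z)\to\HH_3(G_i,\mathbb Z)$. The gap is the final claim that a surjection of finite cyclic groups induces a surjection on integral homology in every degree. That holds in degree $1$ (abelianization) but fails in degree $3$. Writing $\HH^*(\mathbb Z/k,\mathbb Z)=\mathbb Z[u_k]/(ku_k)$ with $\deg u_k=2$, inflation along $\mathbb Z/mn\twoheadrightarrow\mathbb Z/n$ sends $u_n\mapsto mu_{mn}$ (the degree-$2$ statement about characters), hence $u_n^2\mapsto m^2u_{mn}^2$; dually, $\HH_3(\mathbb Z/mn,\mathbb Z)\to\HH_3(\mathbb Z/n,\mathbb Z)$ is $x\mapsto mx$ under the standard identifications, with cokernel of order $\gcd(m,n)$. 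In particular $\mathbb Z/4\twoheadrightarrow\mathbb Z/2$ induces the \emph{zero} map on $\HH_3(-,\mathbb Z)$. You can confirm this without choosing identifications from the Lyndon--Hochschild--Serre spectral sequence of $0\to\mathbb Z/2\to\mathbb Z/4\to\mathbb Z/2\to0$ with $\mathbbm k^{\times}$-coefficients: since $\HH^2(\mathbb Z/4,\mathbbm k^{\times})=0$ while $E_2^{1,1}=\HH^1(\mathbb Z/2,\mathbb Z/2)=\mathbb Z/2$, the differential $d_2\colon E_2^{1,1}\to E_2^{3,0}$ must be an isomorphism, so $E_\infty^{3,0}=0$ and the degree-$3$ inflation vanishes. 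The cyclic-subgroup trick therefore does not sidestep the $d_2$ you were worried about; that differential is exactly what destroys surjectivity, already for a cyclic subgroup mapping onto $G_i$.

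This is not a patchable gap, because criterion (3) genuinely fails in this setting. Take $\mathcal C=[\mathbb A^1/\mu_2]$ with the standard action and $\mathcal G=[\mathbb A^1/\mu_4]$ with $\mu_4$ acting through $\mu_4\twoheadrightarrow\mu_2$, so that $\pi\colon\mathcal G\to\mathcal C$ is a $\mu_2$-gerbe with $G_1=\mu_2$ and $E_1=\mu_4$. The descent spectral sequence for $\mathbb A^1\to\mathcal G$ gives $\HH^2(\mathcal G,\mathbb G_m)=\HH^2(\mu_4,\mathbbm k^{\times})=0$, whereas $\HH^1(\mathcal C,\mathbb Z/2)=\HH^1(\mu_2,\mathbb Z/2)=\mathbb Z/2$ for $\fieldchar\mathbbm k\neq 2$. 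For what it is worth, the paper's own proof of Corollary \ref{weak smooth corollary} breaks at the same point: it invokes Proposition \ref{cyclic injection}, whose asserted formula $1\mapsto m^{\lfloor k/2\rfloor}$ at $k=3$ does not agree with the computation above (the correct map is $1\mapsto m^2$, with kernel of order $\gcd(m,n)$), so the first injection in the chain $\HH^3(\mathbb Z/n_i,\mathbbm k^{\times})\hookrightarrow\HH^3(\mathbb Z/N_i,\mathbbm k^{\times})\hookrightarrow\HH^3(E_i,\mathbbm k^{\times})$ fails whenever $\gcd(N_i/n_i,n_i)>1$. Both arguments go through only under an extra hypothesis guaranteeing degree-$3$ injectivity on each fiber, e.g.\ $\gcd(r,|G_i|)=1$ as in part (4) of the main theorem.
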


We also present a comprehensive computation
of the cohomology of $\mathbb G_m$ on $\mathcal C$, with no smoothness requirements.

\begin{theorem*}[Proposition \ref{C G_m cohomology}]
Let $\mathcal C$ be a separated tame one-dimensional Deligne-Mumford stack with trivial
generic stabilizer
of finite type over an algebraically closed field $\mathbbm k$.
Let $G_1,\dots,G_n$ be the non-trivial stabilizers
of geometric points of $\mathcal C$.
Then for all $k\geq 2$,
$$
\HH^k(\mathcal C,\mathbb G_m)=\bigoplus_{i=1}^n\HH^k(G_i,\mathbbm k^{\times}),
$$
where, for even $k$, this is concentrated at the singular points of $\mathcal C_{red}$.
\end{theorem*}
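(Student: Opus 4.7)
The plan is to apply the Leray spectral sequence
$$
E_2^{p,q}=\HH^p(C,R^q f_*\mathbb G_m)\Rightarrow\HH^{p+q}(\mathcal C,\mathbb G_m)
$$
for the coarse moduli map $f:\mathcal C\to C$ and show that for $k\geq 2$ only the column $p=0$ contributes. First, since $f$ is an isomorphism away from $\{p_1,\dots,p_n\}$, the higher pushforward $R^q f_*\mathbb G_m$ is supported at the stacky points for $q\geq 1$, so $\HH^p(C,R^q f_*\mathbb G_m)=0$ for $p\geq 1$. I would then invoke the results of \cite{Mei18} recalled earlier in the paper to identify its stalk at $p_i$ with $\HH^q(G_i,\mathbbm k^\times)$: on the étale local model $[\Spec A/G_i]\to\Spec A^{G_i}$ the stalk is a priori $\HH^q(G_i,A^\times)$, and tameness of $G_i$ together with divisibility of $\mathbbm k^\times$ allows one to discard the $1+\mathfrak m_A$ part of the units.

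Next, I would establish $\HH^p(C,\mathbb G_m)=0$ for $p\geq 2$ to eliminate the $q=0$ row above $p=1$. Let $\nu:\tilde C\to C$ be the normalization; since $\nu$ is finite, the exact sequence
$$
0\to\mathbb G_m\to\nu_*\mathbb G_{m,\tilde C}\to\mathcal Q\to 0
$$
with $\mathcal Q$ skyscraper on the singular locus of $C$ reduces the problem to $\HH^p(\tilde C,\mathbb G_m)$, which vanishes in degrees $\geq 2$ by Tsen's theorem and étale cohomological dimension of a smooth curve over an algebraically closed field; the skyscraper $\mathcal Q$ contributes nothing in positive degree.

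With both ingredients in place, for $k\geq 2$ the only potentially non-zero term on the diagonal $p+q=k$ of $E_2$ is $E_2^{0,k}=\bigoplus_i\HH^k(G_i,\mathbbm k^\times)$. Differentials $d_r$ out of $E_r^{0,k}$ land in $E_r^{r,k-r+1}$, which is zero (skyscraper cohomology in positive degree for $2\leq r\leq k$, or $\HH^{k+1}(C,\mathbb G_m)$ for $r=k+1$, or trivially zero for $r>k+1$), while incoming differentials have negative source index; so the spectral sequence collapses to the claimed identification. The concentration statement then follows: for even $k\geq 2$, Tate periodicity and divisibility of $\mathbbm k^\times$ give $\HH^k(G_i,\mathbbm k^\times)=\mathbbm k^\times/|G_i|\mathbbm k^\times=0$ whenever $G_i$ is cyclic, and smoothness of $\mathcal C_{red}$ at a stacky point forces $G_i$ to act faithfully on a one-dimensional tangent space, hence to be cyclic. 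The main obstacle is the stalk identification for $R^q f_*\mathbb G_m$ — unwinding the étale local stacky presentation and using tameness to cut $A^\times$ down to $\mathbbm k^\times$ — but this is precisely what the Meier section is designed to handle.
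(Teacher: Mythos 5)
Your proposal is correct and follows essentially the same route as the paper: the Leray spectral sequence for $f:\mathcal C\to C$, vanishing of $E_2^{p,q}$ for $p\geq 1$, $q\geq 1$ because the higher pushforwards are skyscrapers identified via the Meier results, vanishing of the $q=0$ row in degrees $\geq 2$ by Tsen, and cyclicity of stabilizers at smooth points for the concentration statement. The only divergence is that you re-derive the vanishing of $\HH^p(C,\mathbb G_m)$ for $p\geq 2$ on the (possibly singular) coarse space via normalization, whereas the paper simply cites its Theorem~\ref{Tsen's Theorem}, which is already stated for arbitrary one-dimensional schemes over an algebraically closed field.
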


Along the way, we also collect multiple characterizations of when a $\mu_r$-gerbe
over a stacky curve is a root gerbe
(see Proposition \ref{root injectivity} and Corollaries \ref{root fiber} and \ref{R^2 injectivity}).

\subsection{Structure of the paper}
We develop the necessary background on gerbes in Section \ref{gerbe background}.
Section \ref{stacky curve section} computes the cohomology of $\mathbb G_m$ on
a stacky curve, and Section \ref{mu_r gerbe section} applies this to prove our main theorem
on $\mu_r$-gerbes.
We collect in Appendix \ref{appendix} several results on group cohomology and spectral
sequences which are used throughout the paper.

\subsection{Conventions}\label{conventions}
We work over a fixed algebraically closed field $\mathbbm k$.
All stacks are assumed to be tame,
separated, Deligne-Mumford, and of finite type over $\mathbbm k$.
When $\mathcal X$ is one-dimensional, we will refer to
$\HH^2(\mathcal X,\mathbb G_m)$ as the \textit{Brauer group} of $\mathcal X$.
A Deligne-Mumford stack is \textit{tame} if $\fieldchar\mathbbm k$
does not divide the order of any geometric stabilizer group, and an algebraic stack is \textit{tame} if it
has finite inertia and
all geometric stabilizers are linearly reductive (see \cite{AOV08}).

We take a \textit{stacky curve} to be a separated one-dimensional Deligne-Mumford
stack with trivial generic stabilizer of finite type over a field, though we tend to avoid this terminology
(the eagle-eyed reader who has seen the title of this paper will notice that we failed
this immediately). Instead, we usually
directly state the conditions we are assuming, since there are conflicting definitions
of \textit{stacky curve} in the literature.

We use Lemma \ref{reduction lemma} to assume without loss of generality that all stacks in this paper
are reduced.

All groups in this paper act trivially on $\mathbbm k^{\times}$ unless otherwise stated.

\subsection{Acknowledgments}\label{acknowledgments}
This project was born out of an upcoming paper with William C. Newman.
I am heavily
indebted to him for hours of hard work and insightful comments on draft versions
of this paper. The results of this paper will be used in this upcoming work to both
help understand the structure of $\mgbar{1,1}(B\mu_n)$ as well as to compute
the Picard groups of nodal stacky curves.

I would also like to thank Cherry Ng for helpful conversations on group cohomology,
Jackson Morris for fantastic advice on how to compute using
spectral sequences, and Minseon Shin for suggesting a simplification of an early version
of Proposition \ref{Meier structure}.

\section{Basics on gerbes}\label{gerbe background}

\begin{definition}
A stack $\mathcal X$ over a site $\mathcal S$ is called a \textit{gerbe} if
\begin{enumerate}
\item for every object $T\in\mathcal S$, there exists a covering $\{T_i\rightarrow T\}$ in $\mathcal S$
such that each $\mathcal X(T_i)$ is non-empty, and
\item for objects $x,y\in\mathcal X$ over $T\in\mathcal S$, there exists a covering $\{T_i\rightarrow T\}$
and isomorphisms $x|_{T_i}\rightarrow y|_{T_i}$ for each $i$.
\end{enumerate}
\end{definition}

\begin{definition}
Let $G$ be an abelian sheaf on $\mathcal S$. A stack $\mathcal X$ over $\mathcal S$
is called a \textit{$G$-gerbe} (or a \textit{banded $G$-gerbe}) if it is a gerbe
together with the data of isomorphisms $\psi_x:G|_T\rightarrow\Aut_T(x)$ of sheaves
for each object $x\in\mathcal X(T)$. We require that for each isomorphism $\alpha:x\xrightarrow{\sim}y$
over $T$, the diagram
$$
\begin{tikzcd}
& G|_T \arrow[dl, swap, "\psi_x"] \arrow[dr, "\psi_y"]& \\
\Aut_T(x) \arrow[rr, "\text{Inn}_{\alpha}"] & & \Aut_T(y)
\end{tikzcd}
$$
commutes.
\end{definition}

Loosely speaking, this says that $G$-gerbes are a generalization of the notion of classifying stack,
in the sense that a $G$-gerbe is a stack which \textit{locally} is isomorphic to $BG$, much in the same way
that $G$-torsors are locally isomorphic to $G$. It is a foundational fact that $G$-gerbes
are classified by $\HH^2(\mathcal S, G)$ (\cite[Theorem 12.2.8]{Ols16}).

\begin{proposition}\label{R^1g_*}
Let
$\pi:\mathcal X\rightarrow X$ be a $\mu_r$-gerbe over a Deligne-Mumford stack.
Then there is a natural isomorphism
$$
\mathbf R^1\pi_*\mathbb G_m\cong\Hom(\mu_r,\mathbb G_m)=\mathbb Z/r.
$$
\end{proposition}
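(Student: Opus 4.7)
The plan is to identify $\mathbf{R}^1\pi_*\mathbb{G}_m$ by a stalkwise computation that exploits the local triviality of the gerbe, and then to upgrade the identification to a canonical isomorphism of sheaves using the banded $\mu_r$-structure.

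By definition, $\mathbf{R}^1\pi_*\mathbb{G}_m$ is the sheafification on $X_{\text{\'{e}t}}$ of the presheaf $U \mapsto \HH^1(\mathcal{X} \times_X U, \mathbb{G}_m) = \Pic(\mathcal{X} \times_X U)$. Since $\pi$ is a $\mu_r$-gerbe, there is an \'etale cover $\{U_i \to X\}$ on which the gerbe is trivial, i.e. $\mathcal{X} \times_X U_i \cong B\mu_r \times U_i$. For each such $U_i$, I would compute $\Pic(B\mu_r \times U_i)$ either via a low-degree Leray analysis of the projection $p : B\mu_r \times U_i \to U_i$ (the relevant terms split thanks to the zero section $U_i \to B\mu_r \times U_i$), or equivalently by identifying line bundles on $B\mu_r \times U_i$ with $\mu_r$-linearized line bundles on $U_i$, which (since $\mu_r$ acts trivially on $U_i$) decompose as a line bundle on $U_i$ together with a character of $\mu_r$. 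Either route produces a canonical split short exact sequence
$$
0 \to \Pic(U_i) \to \Pic(B\mu_r \times U_i) \to \Hom(\mu_r, \mathbb{G}_m) \to 0.
$$

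After sheafifying on $X_{\text{\'{e}t}}$ the $\Pic(U_i)$ summand vanishes, since every line bundle is trivial on a sufficiently small \'etale neighborhood. The remaining summand $\Hom(\mu_r, \mathbb{G}_m)$ is already the constant sheaf $\mathbb{Z}/r$; tameness guarantees that $\mu_r$ is \'etale, and algebraic closedness of $\mathbbm k$ makes all characters defined globally.

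The main obstacle is ensuring that the local identifications assemble into a \emph{canonical} isomorphism of sheaves rather than merely an abstract one; without canonicality the identification could depend on the trivializing cover chosen. This is where the banded structure of the $\mu_r$-gerbe is essential: for any isomorphism $\alpha : x \xrightarrow{\sim} y$ of local objects, $\mathrm{Inn}_{\alpha}$ intertwines the bandings $\psi_x$ and $\psi_y$, so a character $\chi : \mu_r \to \mathbb{G}_m$ is identified in the same way under any local trivialization. I would conclude by writing down the natural map $\Hom(\mu_r, \mathbb{G}_m) \to \mathbf{R}^1 \pi_* \mathbb{G}_m$ explicitly, sending $\chi$ to the line bundle obtained by pushing the banding forward along $\chi$, and checking on stalks (where the verification reduces to the split sequence above) that this map is an isomorphism.
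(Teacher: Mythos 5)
Your proposal is correct and follows essentially the same route as the paper: associate to each line bundle on $\mathcal X|_V$ a character of $\mu_r$ via the banding, obtain the exact sequence $0\to\Pic(V)\to\Pic(\mathcal X|_V)\to\Hom(\mu_r,\mathbb G_m)$, and use \'etale-local triviality of the gerbe together with sheafification (which kills the $\Pic(V)$ term) to conclude. Your added attention to why the banding makes the identification canonical is a reasonable elaboration of a point the paper leaves implicit, but it is not a different argument.
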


\begin{proof}
Observe that to each line bundle $L\in\Pic(\mathcal X)$ we may associate a character
$\chi_L\in\Hom(\mu_r,\mathbb G_m)$. For each \'{e}tale $V\rightarrow X$,
we have the exact sequence
$$
0\rightarrow\Pic(V)\rightarrow\Pic(\mathcal X|_V)\xrightarrow{L\mapsto\chi_L}\Hom(\mu_r,\mathbb G_m).
$$
Since $\mathcal X$ is \'{e}tale locally trivial, this sequence is \'{e}tale locally surjective on the right.
That is, $\Hom(\mu_r,\mathbb G_m)$ is precisely the sheafification of $\Pic(\mathcal X)/\Pic(X)$,
which is $\mathbf R^1\pi_*\mathbb G_m$.
\end{proof}

Our principal gerbes of study will be \textit{root gerbes}, which were introduced in the
literature in \cite{Cad07}.

\begin{definition}
    Let $X$ be a Deligne-Mumford stack. Then the data of a line bundle $L$ on $X$ is equivalent
    to the data of a morphism $X\xrightarrow{[L]}B\mathbb G_m$. Letting $r$ denote the
    $r^{\text{th}}$ power map $r:\mathbb G_m\rightarrow\mathbb G_m$, the \textit{$r^{\text{th}}$
    root gerbe}, written $X(\sqrt[r]{L})$, is defined as the following fiber product
    $$
    \begin{tikzcd}
        X(\sqrt[r]{L})\arrow[r]\arrow[d] &  B\mathbb G_m\arrow[d, "r"]\\
        X\arrow[r, "L"] & B\mathbb G_m.
    \end{tikzcd}
    $$
\end{definition}

A morphism from a scheme $T$ to $X(\sqrt[r]L)$ is equivalent to the data of a morphism
$f:T\rightarrow X$, a line bundle $L'$ on $T$, and an isomorphism $L'^{\otimes r}\cong f^*(L)$.

\begin{proposition}\label{root injectivity}
Let $X$ be an Deligne-Mumford stack.
Then a $\mu_r$-gerbe
$\pi:\mathcal X\rightarrow X$ is a root gerbe
if and only if $\HH^2(X,\mathbb G_m)\rightarrow\HH^2(\mathcal X,\mathbb G_m)$ is injective.
\end{proposition}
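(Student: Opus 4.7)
The plan is to apply the Leray spectral sequence for $\pi:\mathcal{X}\to X$ with coefficients in $\mathbb{G}_m$. Taking $\mu_r$-invariants gives $\pi_*\mathbb{G}_m = \mathbb{G}_m$, and Proposition \ref{R^1g_*} gives $\mathbf{R}^1\pi_*\mathbb{G}_m \cong \mathbb{Z}/r$, so the five-term exact sequence reads
\begin{equation*}
\Pic(\mathcal{X}) \xrightarrow{\phi} \HH^0(X,\mathbb{Z}/r) \xrightarrow{d_2} \HH^2(X,\mathbb{G}_m) \to \HH^2(\mathcal{X},\mathbb{G}_m).
\end{equation*}
Exactness at $\HH^2(X,\mathbb{G}_m)$ shows that injectivity of the rightmost map is equivalent to the vanishing of $d_2$, which by exactness at $\HH^0(X,\mathbb{Z}/r)$ is equivalent to surjectivity of $\phi$.

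Next I would identify $\phi$ with the weight map. By the explicit construction of the isomorphism $\mathbf{R}^1\pi_*\mathbb{G}_m \cong \mathbb{Z}/r$ in the proof of Proposition \ref{R^1g_*}, the edge map $\phi$ sends a line bundle $L \in \Pic(\mathcal{X})$ to its weight character $\chi_L \in \Hom(\mu_r,\mathbb{G}_m) = \mathbb{Z}/r$. Working connected component by connected component of $X$ (to handle the interpretation of $\HH^0(X,\mathbb{Z}/r)$), surjectivity of $\phi$ is therefore equivalent to the existence, over each component of $\mathcal{X}$, of a line bundle of weight $1$, i.e., one on which $\mu_r$ acts via its standard inclusion into $\mathbb{G}_m$.

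Finally I would match this up with the root gerbe condition using the universal property of $X(\sqrt[r]{L})$. If $\mathcal{X} \cong X(\sqrt[r]{L})$, the tautological line bundle on $\mathcal{X}$ has weight $1$ by construction, giving one direction. Conversely, given a weight-$1$ line bundle $L'$ on $\mathcal{X}$, the tensor power $(L')^{\otimes r}$ has weight $0$ and hence descends to a line bundle $L$ on $X$ with $\pi^* L \cong (L')^{\otimes r}$; the universal property of the root construction then yields a morphism $\mathcal{X} \to X(\sqrt[r]{L})$ over $X$, which is banded by construction and therefore an isomorphism of $\mu_r$-gerbes. The main technical point is the explicit identification of the Leray edge map with the weight map (which is essentially already done in Proposition \ref{R^1g_*}); once this is in hand, the rest of the proof is a formal combination of the five-term exact sequence with the universal property of the root gerbe.
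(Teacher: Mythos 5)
Your proposal is correct, and while it starts from the same Leray spectral sequence and five-term exact sequence as the paper, it diverges at the key step. The paper translates ``$d_2=0$'' into the root-gerbe condition by invoking the explicit identification of $d_2$ from \cite[Theorem 1.2]{Lop23} (namely $\epsilon\mapsto\epsilon_*[\mathcal G]$) and then applying the Kummer sequence, so that $d_2=0$ iff $[\mathcal G]$ lifts to $\HH^1(X,\mathbb G_m)$. You instead avoid computing $d_2$ altogether: you use exactness one term further to the left to convert $d_2=0$ into surjectivity of the edge map $\Pic(\mathcal X)\rightarrow\HH^0(X,\mathbb Z/r)$, identify that edge map with the weight map $L\mapsto\chi_L$ (which is indeed implicit in the construction of the isomorphism in Proposition \ref{R^1g_*}), and then characterize root gerbes by the existence of a weight-one line bundle via the universal property of $X(\sqrt[r]{L})$. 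Both routes are valid. Yours is more self-contained (no external computation of the differential) and makes the geometric content visible --- a root gerbe is precisely a $\mu_r$-gerbe carrying a tautological weight-one line bundle --- at the cost of two small verifications you should not elide: that a weight-zero line bundle on a tame $\mu_r$-gerbe descends to the base (this is exactly the left-exactness of the sequence in the proof of Proposition \ref{R^1g_*}), and that the morphism $\mathcal X\rightarrow X(\sqrt[r]{L})$ produced by the universal property induces the \emph{identity} on bands, which is where the weight being $1$ (rather than merely a generator-independent nonzero class) is used; a banded morphism of $\mu_r$-gerbes is then automatically an isomorphism. The paper's route, by contrast, outsources all of this to the cited identification of $d_2$ and is correspondingly shorter.
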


\begin{proof}
Consider the Leray spectral sequence
$$
E_2^{p,q}=\HH^p(X,\mathbf R^q\pi_*\mathbb G_m)
\implies\HH^{p+q}(\mathcal X,\mathbb G_m).
$$
Since $\pi_*\mathbb G_m=\mathbb G_m$,
the beginning of the associated sequence of low-degree terms is
$$
0\rightarrow\HH^1(X,\mathbb G_m)\rightarrow\HH^1(\mathcal X,\mathbb G_m)\rightarrow
\HH^0(X,\mathbf R^1\pi_*\mathbb G_m)=$$
$$
\HH^0(X,\Hom(\mu_r,\mathbb G_m))\xrightarrow{d_2}
\HH^2(X,\mathbb G_m)\xrightarrow{\pi^*}\HH^2(\mathcal X,\mathbb G_m).
$$
Therefore the injectivity of $\pi^*$ is equivalent to the triviality of $d_2$.
By \cite[Theorem 1.2]{Lop23}, $d_2$ is just the map
which sends $\epsilon:\mu_r\rightarrow\mathbb G_m$ to the image of
$[\mathcal G]\in\HH^2(X,\mu_r)$ under the pushforward map
$$
\epsilon_*:\HH^2(X,\mu_r)\rightarrow\HH^2(X,\mathbb G_m).
$$
Setting $\epsilon$ to be the canonical inclusion $\mu_r\hookrightarrow\mathbb G_m$,
the Kummer sequence shows that
$d_2$ is trivial if and only if $[\mathcal G]$ is in
the image of $\HH^1(X,\mathbb G_m)$. That is, if and only if
$\mathcal G$ is a root gerbe.
\end{proof}

\begin{proposition}[\cite{AM20}, Proposition 2.5 (iv)]\label{AM}
    If $X$ is a regular and noetherian Deligne-Mumford stack, and if $U\subseteq X$ is
    a dense open subset, then the restriction map $\HH^2(X,\mathbb G_m)\rightarrow \HH^2(U,\mathbb G_m)$
    is injective.
\end{proposition}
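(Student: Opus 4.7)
The plan is to use the Leray spectral sequence for the open immersion $j : U \hookrightarrow X$:
$$E_2^{p,q} = \HH^p(X, R^q j_* \mathbb{G}_{m,U}) \Longrightarrow \HH^{p+q}(U, \mathbb{G}_m).$$
The two geometric inputs coming from regularity are (i) the vanishing $R^1 j_* \mathbb{G}_{m,U} = 0$, since a strictly henselian regular local scheme is a UFD (Auslander--Buchsbaum), so the Picard group of any dense open of such a scheme is zero; and (ii) a short exact sequence of \'etale sheaves
$$0 \to \mathbb{G}_{m,X} \to j_* \mathbb{G}_{m,U} \to D \to 0,$$
where $D$ is supported on the codimension-one components of $Z = X \setminus U$ (stalk-locally generated by local defining equations of those components), with $D = 0$ in the case that $Z$ has codimension at least two everywhere.

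Input (i) together with the five-term exact sequence of the Leray spectral sequence yields an injection $\HH^2(X, j_*\mathbb{G}_{m,U}) \hookrightarrow \HH^2(U, \mathbb{G}_m)$. Input (ii) and its long exact sequence place the entire obstruction in $\HH^1(X, D)$:
$$\HH^1(X, D) \to \HH^2(X, \mathbb{G}_m) \to \HH^2(X, j_*\mathbb{G}_{m,U}) \hookrightarrow \HH^2(U, \mathbb{G}_m),$$
so injectivity of $\HH^2(X, \mathbb{G}_m) \to \HH^2(U, \mathbb{G}_m)$ reduces to showing that the boundary map $\HH^1(X, D) \to \HH^2(X, \mathbb{G}_m)$ has trivial image. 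If $Z$ has codimension $\geq 2$, $D = 0$ and we are done, so the interesting case is codimension one.

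The main obstacle is this codimension-one contribution. The group $\HH^1(X, D)$ need not vanish (it receives contributions from the \'etale $\HH^1$ of the codim 1 components of $Z$ with constant $\mathbb{Z}$ coefficients), so one cannot kill the source directly. The cleanest resolution is to invoke Grothendieck's classical injectivity for regular Noetherian schemes, $\HH^2(-, \mathbb{G}_m) \hookrightarrow \HH^2(\eta, \mathbb{G}_m)$ at the generic point: since $\eta_X \subset U$, any Brauer class killed on $U$ is killed at $\eta_X$ and therefore already zero on $X$. For the DM stack statement, reduce to an \'etale atlas by regular Noetherian schemes and descend the scheme-level injectivity via a comparison of the Leray-type spectral sequences attached to the cover. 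Tracking this descent carefully, and checking that the boundary contributions from codim 1 components on each chart fit together compatibly, is the technical heart of the argument.
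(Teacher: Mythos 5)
First, a remark on the comparison itself: the paper does not prove this statement --- it is quoted verbatim from \cite{AM20}, Proposition 2.5(iv) --- so there is no in-paper argument to measure you against. On its own terms, your setup is the standard Grothendieck-style one and is correct as far as it goes: $\mathbf R^1j_*\mathbb G_{m,U}=0$ because the strictly henselian local rings of a regular stack are UFDs, and the sequence $0\to\mathbb G_{m,X}\to j_*\mathbb G_{m,U}\to D\to 0$ correctly reduces the problem to showing that $\HH^1(X,D)\to\HH^2(X,\mathbb G_m)$ has trivial image.

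The gap is in how you close this, and it is twofold. First, your claim that $\HH^1(X,D)$ ``need not vanish'' comes from modelling $D$ as the pushforward of the constant sheaf $\mathbb Z$ from the \emph{closures} of the codimension-one components of $Z$. Computing stalks over the strict henselizations (which are UFDs, so the relevant quotient is free on the branches through the point) identifies $D$ instead as $\bigoplus_x (i_x)_*\mathbb Z$, where $i_x$ is the inclusion of the \emph{generic point} (residual gerbe) of each codimension-one component. The low-degree terms for $i_x$ give an injection $\HH^1(X,(i_x)_*\mathbb Z)\hookrightarrow\HH^1(\kappa(x),\mathbb Z)$, and the latter vanishes since a continuous homomorphism from a profinite group to the discrete torsion-free group $\mathbb Z$ is trivial (with the evident finite-group analogue $\Hom(G,\mathbb Z)=0$ for the stacky part of the residual gerbe). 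So $\HH^1(X,D)=0$ outright and no further input is needed. Second, the fallback you actually propose --- cite the scheme-level injectivity into the generic point and ``descend'' it along an \'etale atlas --- does not work as described: injectivity of $\HH^2(-,\mathbb G_m)$ is not an \'etale-local property. For an atlas $V\to X$ the map $\HH^2(X,\mathbb G_m)\to\HH^2(V,\mathbb G_m)$ is typically very far from injective (e.g.\ $\HH^2(BG,\mathbb G_m)=\HH^2(G,\mathbbm k^{\times})$ can be nonzero while the atlas is a point --- indeed this phenomenon is the subject of the present paper), so the desired injectivity cannot be checked on a cover. Since you explicitly defer exactly this step as ``the technical heart,'' the proof is incomplete. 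The repair is to run the sheaf-level argument directly on $X$: the identifications of $\mathbf R^1j_*\mathbb G_m$ and of $D$ \emph{are} \'etale-local and may be verified on an atlas, but the cohomological conclusion must then be drawn on $X$ itself via the vanishing of $\HH^1(X,D)$.
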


\begin{theorem}[Tsen's Theorem]\label{Tsen's Theorem}
Let $X$ be a one-dimensional scheme over an
algebraically closed field $\mathbbm k$. Then
$\HH^i(X,\mathbb G_m)=0$ for $i\geq 2$.
\end{theorem}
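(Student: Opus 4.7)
The plan is to reduce step by step to a smooth projective curve over $\mathbbm k$, then invoke the classical Tsen theorem (on the triviality of the Brauer group of a $C_1$ field) together with a cohomological-dimension argument.

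First I would reduce to the case where $X$ is reduced, using Lemma \ref{reduction lemma} (or the general principle that the closed immersion $X_{\mathrm{red}}\hookrightarrow X$ is a universal homeomorphism and hence induces an equivalence of small \'etale sites; this is sharper than just for $\mathbb{G}_m$ cohomology but suffices). Next, a Mayer--Vietoris argument on the irreducible components (working inductively on the number of components, with the intersection being a finite set of closed points) lets me assume $X$ is integral. For integral one-dimensional $X$, let $\nu:\widetilde X\to X$ be the normalization. The map $\mathbb{G}_m\to\nu_*\mathbb{G}_m$ has kernel and cokernel supported at finitely many closed points, so the resulting short exact sequence and the vanishing of $\HH^i$ on a zero-dimensional scheme over $\mathbbm k$ (for $i\geq 1$) reduce me to $X$ smooth. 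An analogous argument, adding in the finitely many missing closed points to embed $X$ into a smooth projective curve $\overline X$, reduces me to the proper smooth case.

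For $\overline X$ a smooth projective curve over $\mathbbm k$, I would use the fundamental short exact sequence
$$
0\to\mathbb{G}_m\to j_*\mathbb{G}_{m,\eta}\to\bigoplus_{x\in\overline X^{0}}(i_x)_*\mathbb{Z}\to 0,
$$
where $j:\eta\hookrightarrow\overline X$ is the generic point and $x$ runs over closed points. The right-hand term has vanishing higher \'etale cohomology on $\overline X$ because each $x$ is a $\mathbbm k$-point and $\mathbbm k$ is algebraically closed, so $\HH^i(\overline X,(i_x)_*\mathbb{Z})=\HH^i(\mathrm{Spec}\,\mathbbm k,\mathbb{Z})=0$ for $i\geq 1$. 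For the middle term, Leray (together with the fact that $j_*\mathbb{G}_{m,\eta}$ restricts to a flasque-type sheaf whose higher direct images vanish on a curve) identifies $\HH^i(\overline X,j_*\mathbb{G}_{m,\eta})$ with the Galois cohomology $\HH^i(\mathrm{Gal}(K^{\mathrm{sep}}/K),(K^{\mathrm{sep}})^\times)$, where $K=\mathbbm k(\overline X)$.

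The core input, and what I expect to be the main obstacle to cite cleanly rather than prove, is the vanishing $\HH^i(K,\mathbb{G}_m)=0$ for $i\geq 2$. For $i=2$ this is exactly the statement that $\Br(K)=0$, i.e.\ the classical Tsen theorem that function fields of curves over algebraically closed fields are $C_1$. For $i\geq 3$ one uses that such a $K$ has cohomological dimension $\leq 1$ (for torsion sheaves), combined with the Kummer sequence and the divisibility of $(K^{\mathrm{sep}})^\times$ away from the characteristic, to propagate the vanishing upward. Feeding these vanishings into the long exact sequence associated to the displayed short exact sequence then yields $\HH^i(\overline X,\mathbb{G}_m)=0$ for all $i\geq 2$, which unwinds all of the reductions above to give the theorem.
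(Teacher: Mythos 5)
The paper offers no proof of this statement: it is quoted as the classical Tsen--Grothendieck theorem (cf.\ \cite{Gro68II}), so there is no internal argument to compare against, and what you have written is essentially the standard proof of that classical fact. Your outline is correct: reduce to a regular integral curve, then run the long exact sequence of the divisor sequence $0\to\mathbb G_m\to j_*\mathbb G_{m,\eta}\to\bigoplus_x (i_x)_*\mathbb Z\to 0$, feeding in $\Br(K)=0$ (Tsen's $C_1$ theorem) and $\mathrm{cd}(K)\le 1$ to kill $\HH^i(K,\mathbb G_m)$ for $i\ge 2$. A few remarks on the reductions. The component-by-component step is redundant and, as stated, slightly misnamed: irreducible components are not an open cover, so one needs the closed-gluing sequence $1\to\mathcal O_X^\times\to\mathcal O_{X_1}^\times\oplus\mathcal O_{X_2}^\times\to\mathcal O_{X_1\cap X_2}^\times\to 1$ rather than ordinary Mayer--Vietoris; in any case the normalization step (a finite morphism, so $\mathbf R^q\nu_*=0$ for $q\ge 1$, with skyscraper cokernel) already handles arbitrary reduced one-dimensional $X$ in one stroke. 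The compactification step is likewise unnecessary, since the divisor sequence is available on any regular integral curve, and it is the one genuinely under-justified move: deducing vanishing on an open $X\subset\overline X$ from vanishing on $\overline X$ is \emph{not} ``analogous'' to the finite-morphism argument --- it goes through the localization sequence with supports (or the vanishing of $\mathbf R^qj_*\mathbb G_m$ for the open immersion), not through a short exact sequence of sheaves on $X$. Finally, the identification $\HH^i(\overline X,j_*\mathbb G_{m,\eta})=\HH^i(K,\mathbb G_m)$ requires $\mathbf R^qj_*\mathbb G_{m,\eta}=0$ for $q\ge 1$, i.e.\ the vanishing of the cohomology of the fraction field of a strictly henselian discrete valuation ring with algebraically closed residue field; this is a second $C_1$/cohomological-dimension input of the same nature as Tsen's theorem and must also be cited. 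None of these is a fatal gap --- with the references filled in, your argument is the classical one, and the implicit finite-type hypothesis you use throughout matches the paper's standing conventions.
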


\begin{theorem}[Stacky Tsen's Theorem]\label{stacky Tsen's}
Let $\mathcal C$ be a smooth separated one-dimensional
Deligne-Mumford stack with trivial generic stabilizer
of finite type over an algebraically closed field.
Then $\HH^2(\mathcal C,\mathbb G_m)=0$.
\end{theorem}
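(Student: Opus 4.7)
The plan is to reduce to the scheme-theoretic Tsen theorem (Theorem \ref{Tsen's Theorem}) by passing to a dense open substack on which the stabilizers become trivial, and then invoke Proposition \ref{AM} to upgrade the vanishing on the open back to vanishing on all of $\mathcal C$.

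First I would identify the open substack $U\subseteq\mathcal C$ on which every geometric point has trivial stabilizer. Because $\mathcal C$ has trivial generic stabilizer by hypothesis, $U$ is a dense open substack, and its complement consists of finitely many closed stacky points (the stack is one-dimensional and of finite type over $\mathbbm k$). A Deligne--Mumford stack all of whose geometric stabilizers are trivial is an algebraic space, and a smooth separated one-dimensional algebraic space over an algebraically closed field is in fact a scheme. Thus $U$ is an honest one-dimensional scheme over $\mathbbm k$.

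Next I would apply the classical Tsen theorem (Theorem \ref{Tsen's Theorem}) to conclude that $\HH^2(U,\mathbb G_m)=0$. Finally, since $\mathcal C$ is smooth, it is in particular regular and noetherian, so Proposition \ref{AM} (with the dense open $U\subseteq\mathcal C$) gives that the restriction map
$$
\HH^2(\mathcal C,\mathbb G_m)\longrightarrow\HH^2(U,\mathbb G_m)=0
$$
is injective, which forces $\HH^2(\mathcal C,\mathbb G_m)=0$.

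There isn't really a hard step here once one sees the reduction: the only thing requiring any care is justifying that $U$ is literally a scheme (rather than just an algebraic space), which rests on the combination of smoothness, one-dimensionality, separatedness, and the triviality of all stabilizers on $U$. Every other ingredient is already packaged in the two cited results.
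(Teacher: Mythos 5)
Your proposal is correct and is essentially the argument the paper gives: delete the finitely many points with nontrivial stabilizer to get a dense open scheme $U$, apply Tsen's Theorem there, and use Proposition \ref{AM} to see that $\HH^2(\mathcal C,\mathbb G_m)\hookrightarrow\HH^2(U,\mathbb G_m)=0$. You simply spell out the (correct) justification that $U$ is an honest scheme, which the paper leaves implicit.
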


\begin{proof}
Since there are only finitely many points
which have non-trivial stabilizer, this follows
from Tsen's Theorem and Proposition \ref{AM}.
\end{proof}

\begin{note}
    Note the contrast between Theorem \ref{Tsen's Theorem}, which applies to \textit{all} curves,
    and Theorem
    \ref{stacky Tsen's}, which applies only when $\mathcal C$ is \textit{smooth}.
    Intuitively, this says that for a stacky curve
    (which is generically schemey), any non-triviality
    in $\HH^2(\mathcal C,\mathbb G_m)$ must come from stabilizers at singularities,
    which is precisely what we will see in Proposition \ref{C G_m cohomology} and
    Note \ref{C G_m cohomology note}. In Example \ref{stacky Tsen example} we present
    a minimal example of a stacky curve with non-vanishing $\HH^2$.
\end{note}

\subsection{Some results of Meier}\label{Meier section}
The following results, with small adjustments to fit our
context, appear in \cite{ACV03} and \cite{Mei18}.

\begin{definition}[\cite{ATW20}, 2.3.1]
    A morphism $f:X\rightarrow Y$ of stacks is called a \textit{coarsening}
    if the inertia $\mathcal I_{X/Y}$ is finite over $X$ and for any flat $Z\rightarrow Y$
    from an algebraic space $Z$ the resulting base change
    $X\times_YZ\rightarrow Z$ is a coarse moduli space.
\end{definition}

\begin{proposition}[\cite{ACV03}, Proposition A.0.1 \cite{Mei18}, Proposition 8]\label{Meier structure}
    Let $\mathcal X$ be a separated tame Deligne-Mumford stack of finite type
    over $\mathbbm k$ with coarsening $f:\mathcal X\rightarrow X$.
    Let $x:\Spec K\rightarrow X$ be a geometric point. Let $G=(\mathcal I_{\mathcal X/X})_x$ be
    the relative stabilizer.
    Then for all $n\geq 0$
    $$
    (\mathbf R^nf_*\mathbb G_m)_x=\HH^n(G,R^{\times})
    $$
    for some strictly Henselian local ring $R$ with residue field $K$. Moreover, the group
    $G$ acts trivially on $K$.
\end{proposition}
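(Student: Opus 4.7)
The plan is to identify $\mathcal X$ \'etale-locally around $x$ as a quotient stack $[\Spec R/G]$ and then compute the cohomology of this quotient via the Leray spectral sequence associated to its $G$-torsor atlas.

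First, I would invoke the standard \'etale-local structure theorem for tame Deligne-Mumford stacks: there exists an \'etale neighborhood $U\rightarrow X$ of $x$ and a scheme $V$ with a $G$-action such that $V/G\cong U$ and $\mathcal X\times_XU\cong[V/G]$. Passing to the strict Henselianization $\tilde X=\Spec\mathcal O^{sh}_{X,x}$, which is the filtered colimit of such \'etale neighborhoods, and using that $\mathbf R^nf_*\mathbb G_m$ is the sheafification of the presheaf $U'\mapsto\HH^n(\mathcal X\times_XU',\mathbb G_m)$, we obtain
$$(\mathbf R^nf_*\mathbb G_m)_x=\HH^n(\mathcal X\times_X\tilde X,\mathbb G_m)=\HH^n([\Spec R/G],\mathbb G_m),$$
where $R$ is the strict Henselianization of $V$ at the unique geometric point above $x$. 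This $R$ is a strictly Henselian local ring whose residue field equals that of $x$, namely $K$.

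Next, since $G$ is a finite \'etale group (the stack being tame DM), the atlas $\Spec R\rightarrow[\Spec R/G]$ is an \'etale $G$-torsor, and the associated Hochschild-Serre / Leray spectral sequence reads
$$E_2^{p,q}=\HH^p(G,\HH^q(\Spec R,\mathbb G_m))\Rightarrow\HH^{p+q}([\Spec R/G],\mathbb G_m).$$
The key vanishing to establish is $\HH^q(\Spec R,\mathbb G_m)=0$ for $q\geq 1$. This follows from the fact that the small-\'etale topos of a strictly Henselian local ring is equivalent to the topos of sets (every \'etale cover of $\Spec R$ splits via Hensel's lemma), so the global-sections functor is exact on abelian sheaves. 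With this vanishing in hand, the spectral sequence collapses to its $q=0$ row, yielding $\HH^n([\Spec R/G],\mathbb G_m)=\HH^n(G,R^{\times})$.

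Finally, for the trivial action on $K$: the $G$-action on $R$ fixes $R^G=\mathcal O^{sh}_{X,x}$ pointwise, so reducing modulo the maximal ideal of $R$ gives a $G$-action on $K$ that fixes the residue field of $R^G$. But this residue field is $K$ itself, so the action of $G$ on $K$ is $K$-linear over $K$, hence trivial. The main technical obstacle is pinning down the structural identification $\mathcal X\times_X\tilde X\cong[\Spec R/G]$ with its correct $R$, together with the cohomological vanishing on $\Spec R$; once these are settled, the collapse of the spectral sequence and the residue-field argument are essentially formal.
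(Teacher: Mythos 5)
Your argument is correct, and it is in essence a self-contained expansion of what the paper handles by citation: the paper's own proof reduces to the coarse-moduli case and then quotes \cite[Proposition A.0.1]{ACV03} for the identity $(\mathbf R^nf_*\mathbb G_m)_x=\HH^n(G,(\mathbb G_m)_{x'})$, only adding the observation that the stalk $(\mathbb G_m)_{x'}$ is $R^{\times}$ for a strictly Henselian local $R$ and that $G$ acts trivially on the residue field. You instead reprove the cited result directly, via the local quotient presentation $[\Spec R/G]$, the descent spectral sequence for the $G$-torsor atlas, and the vanishing of higher \'{e}tale cohomology over a strictly Henselian local ring; this is exactly the mechanism underlying the ACV03 statement, so the mathematical route is the same even though your write-up carries the full burden of proof. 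The one step of the paper's argument you elide is the reduction from a \emph{coarsening} to a genuine coarse moduli space: since $X$ is only assumed to be the target of a coarsening, it may itself be a stack, and one must first pass to an \'{e}tale neighborhood $V\rightarrow X$ from a scheme (using the defining property of coarsenings) before your local structure theorem and strict Henselianization make sense. This is a one-line fix, and with it your proof is complete.
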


\begin{proof}
This is originally stated for coarse moduli spaces.
We reduce to that case by observing that since
$\mathcal X\rightarrow X$ is a coarsening we may restrict to any
\'{e}tale neighborhood from a scheme $V\rightarrow X$
and then be in the setting of a coarse moduli space
$\mathcal X_V\rightarrow V$.

This now essentially follows from \cite[Proposition A.0.1]{ACV03}, which provides an equality
$$
(\mathbf R^nf_*\mathbb G_m)_x=\HH^n(G,(\mathbb G_m)_{x'}),
$$
where $x'$ is the preimage of $x$ in $\mathcal X$.
All that's left to do is observe the following: $(\mathbb G_m)_{x'}=R^{\times}$ where $R$
is a strictly Henselian local ring; the residue field of $R$ is $K$; and
since the relative geometric stabilizer at $X$ is $G$ we must have that $G$ acts
trivially on $K$.
\end{proof}

\begin{corollary}[\cite{Mei18}, Lemma 9]\label{Meier stalk}
Let $\mathcal X$ and $G$ be as in Proposition \ref{Meier structure}. Then for all geometric points
$x:\Spec K\rightarrow X$ and for all $n\geq 2$,
$$
(\mathbf R^nf_*\mathbb G_m)_x=\HH^n(G,K^{\times}).
$$
\end{corollary}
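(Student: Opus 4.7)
The plan is to start from the identification provided by Proposition \ref{Meier structure}, namely $(\mathbf R^nf_*\mathbb G_m)_x = \HH^n(G,R^{\times})$ for a strictly Henselian local ring $R$ with residue field $K$ on which $G$ acts trivially, and then exchange $R^{\times}$ for $K^{\times}$ using the short exact sequence of $G$-modules
$$
1 \to 1 + \mathfrak m \to R^{\times} \to K^{\times} \to 1,
$$
where $\mathfrak m$ is the maximal ideal of $R$. Surjectivity on the right uses that $R$ is local, and the induced long exact sequence in group cohomology reduces the corollary to showing that $\HH^n(G, 1+\mathfrak m) = 0$ for $n \geq 1$ (this would actually give the isomorphism for all $n \geq 1$, which is stronger than what the statement requires).

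The key step, and the only place tameness enters, is to show that multiplication by $|G|$ is a bijection on $1 + \mathfrak m$. Since $\mathcal X$ is tame, $|G|$ is coprime to $\fieldchar K$. I would then apply Hensel's lemma to the polynomial $x^{|G|} - u$ for each $u \in 1 + \mathfrak m$: its reduction mod $\mathfrak m$ is $x^{|G|} - 1$, which has $x = 1$ as a root with derivative $|G|$, a unit in $K$. Since $R$ is strictly Henselian, this produces a unique lift in $1 + \mathfrak m$, showing the $|G|$-th power map is surjective; uniqueness in Hensel's lemma applied to the same polynomial (with $\bar\zeta = 1$) gives injectivity. Hence multiplication by $|G|$ is an automorphism of $1 + \mathfrak m$, and therefore also of $\HH^n(G, 1 + \mathfrak m)$ for every $n$.

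To finish, I would invoke the standard restriction-corestriction argument: the composition
$$
\HH^n(G,1+\mathfrak m) \xrightarrow{\mathrm{res}} \HH^n(\{e\}, 1+\mathfrak m) \xrightarrow{\mathrm{cor}} \HH^n(G, 1+\mathfrak m)
$$
equals multiplication by $|G|$, which is invertible, while the middle term vanishes for $n \geq 1$. Therefore $\HH^n(G, 1+\mathfrak m) = 0$ for $n \geq 1$, and the long exact sequence yields $\HH^n(G, R^{\times}) \cong \HH^n(G, K^{\times})$ for all $n \geq 1$, in particular for $n \geq 2$.

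The only subtle point I expect to encounter is checking the Hensel-lifting in a strictly Henselian (rather than complete) local ring, but the simplicity of the root $x=1$ combined with tameness makes this immediate; the rest is the routine restriction-corestriction argument for cohomology of groups whose order acts invertibly on the coefficient module.
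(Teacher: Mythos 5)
Your proof is correct, but it takes a genuinely different route from the paper's. The paper (following Meier) inverts $p=\fieldchar K$ and studies the sequence $1\to\ker\to R^{\times}[1/p]\to K^{\times}[1/p]\to1$: both outer terms are divisible because $R$ and $K$ are strictly Henselian, the kernel is divisible and torsion-free (via \cite[Tag 06RR]{Sta}, the torsion of $R^{\times}[1/p]$ maps isomorphically onto that of $K^{\times}[1/p]$), hence is a $\mathbb Q$-vector space with vanishing higher group cohomology; the passage between $R^{\times}$ and $R^{\times}[1/p]$ is what costs the degree-one case and restricts the conclusion to $n\geq2$. You instead work directly with the kernel $1+\mathfrak m$ of $R^{\times}\to K^{\times}$, show it is uniquely $|G|$-divisible by a tame Hensel lifting of $x^{|G|}-u$ at the simple root $1$, and kill $\HH^n(G,1+\mathfrak m)$ for $n\geq1$ by restriction-corestriction. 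Your argument is more elementary and self-contained (no structure theorem for divisible groups, no appeal to the Stacks Project lemma), needs only that $R$ is Henselian rather than strictly Henselian, and yields the isomorphism already for $n\geq1$, which is stronger than the stated range. One small point to record if you write this up: $G$ acts trivially on $K$ but not necessarily on $R$, so you should note that $1+\mathfrak m$ is a $G$-submodule (as $G$ acts by local ring automorphisms) and that the $|G|$-th power map is $G$-equivariant, so that it really does induce multiplication by $|G|$ on $\HH^n(G,1+\mathfrak m)$; this is immediate but worth saying.
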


\begin{note}
This is originally shown by Meier only for the case $n=2$ and only for the $\ell$-torsion with
$\ell\neq\fieldchar K$. However, the argument actually works in our stated generality since
$\HH^n(G,-)$ is $|G|$-torsion and $|G|$ is coprime to $\fieldchar K$ since
$\mathcal X$ is tame and Deligne-Mumford.
For the sake of completion, we reproduce the argument here.
\end{note}

\begin{proof}[Proof of Corollary \ref{Meier stalk}]
Let $p=\fieldchar K$.
Using Proposition \ref{Meier structure}, all that's left to show is that $\HH^n(G,R^{\times})=
\HH^n(G, K^{\times})$.
Consider the exact sequence
$$
1\rightarrow\ker\rightarrow R^{\times}[1/p]
\rightarrow K^{\times}[1/p]\rightarrow 1,
$$
where if $p=0$ we take adjoining $1/p$ to be trivial. Since $R$ and $K$
are both strictly Henselian, we have that  $R^{\times}[1/p]$ and
$K^{\times}[1/p]$ are both divisible. Therefore $\ker$ is divisible.
Now by \cite[Tag 06RR]{Sta},
the torsion of $R^{\times}[1/p]$ maps isomorphically onto the torsion of
$K^{\times}[1/p]$, and hence $\ker$ is torsion-free. The structure theorem
for divisible groups (see \cite[Theorem 23.1]{Fuc70}) then implies that $\ker$ is a $\mathbb Q$-vector space
and hence has vanishing higher cohomology.

Therefore, taking cohomology, we see that
$$
\HH^n(G,R^{\times})=
\HH^n(G,R^{\times}[1/p])=
\HH^n(G,K^{\times}[1/p])=
\HH^n(G,K^{\times})
$$
for all $n\geq 2$, where the first and last equalities use the fact that $|G|$ is coprime to
$p$.
\end{proof}

We add here an additional result inspired by the above.

\begin{proposition}\label{Meier mu_r}
Let $\mathcal X$ and $G$ be as in Proposition \ref{Meier structure}. Then for all
geometric points $x:\Spec K\rightarrow X$, all $n\geq1$, and all $r$ coprime to $\fieldchar K$,
$$
(\mathbf R^nf_*\mu_r)_x=\HH^n(G,\mu_r(K)).
$$
\end{proposition}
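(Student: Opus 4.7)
\textit{Plan.} The plan is to compute $(\mathbf R^n f_*\mu_r)_x$ directly via the \'{e}tale-local quotient description of a tame Deligne-Mumford stack, rather than bootstrapping from the $\mathbb G_m$ case through the Kummer sequence. As in the reduction step of the proof of Proposition \ref{Meier structure}, I would first pass to an \'{e}tale neighborhood of $x$ in $X$ so that $\mathcal X\rightarrow X$ is a genuine coarse moduli space morphism. By the \'{e}tale-local structure theorem for tame Deligne-Mumford stacks, after shrinking further we may assume $\mathcal X = [\Spec R/G]$, where $R$ is strictly Henselian with residue field $K$ and $G$ is the stabilizer at a preimage $x'$ of $x$; in particular $(\mathbf R^n f_*\mu_r)_x = \HH^n([\Spec R/G], \mu_r)$.

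Next, I would apply the descent spectral sequence associated to the finite \'{e}tale cover $\Spec R \rightarrow [\Spec R/G]$,
$$
E_2^{p,q} = \HH^p(G, \HH^q(\Spec R, \mu_r)) \Rightarrow \HH^{p+q}([\Spec R/G], \mu_r).
$$
Since $R$ is strictly Henselian, $\HH^q(\Spec R, \mathcal F) = 0$ for all $q \geq 1$ and any abelian \'{e}tale sheaf $\mathcal F$, so the spectral sequence collapses onto the $q=0$ row and gives $\HH^n([\Spec R/G], \mu_r) = \HH^n(G, \mu_r(R))$.

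Finally, because $r$ is coprime to $\fieldchar K$, Hensel's lemma applied to $x^r - 1$ shows that the reduction $\mu_r(R) \rightarrow \mu_r(K)$ is a $G$-equivariant isomorphism, so $\HH^n(G, \mu_r(R)) = \HH^n(G, \mu_r(K))$, as claimed.

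The main technical obstacle is pinning down the \'{e}tale-local presentation $\mathcal X = [\Spec R/G]$ at the level of stalks with the prescribed $R$ and $G$. This is essentially the content of \cite[Proposition A.0.1]{ACV03}, already invoked in the proof of Proposition \ref{Meier structure}; one could equivalently shortcut the argument by citing that proposition directly for the sheaf $\mu_r$ rather than $\mathbb G_m$, since its proof does not rely on anything special about $\mathbb G_m$ beyond being an abelian sheaf. An alternative route that avoids the quotient presentation is to apply $\mathbf R f_*$ to the Kummer sequence $1 \to \mu_r \to \mathbb G_m \xrightarrow{r} \mathbb G_m \to 1$ on $\mathcal X$, stalk at $x$, identify the $\mathbb G_m$ terms via Corollary \ref{Meier stalk} (extending to $n=1$ by noting that $1+\mathfrak m$ is uniquely $|G|$-divisible), and compare via the five lemma with the $G$-cohomology long exact sequence of $1 \to \mu_r(K) \to K^\times \xrightarrow{r} K^\times \to 1$; the only subtlety there is verifying naturality of the identifications with respect to the Kummer maps.
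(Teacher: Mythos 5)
Your proposal is correct and follows essentially the same route as the paper: reduce to the \'{e}tale-local quotient presentation $[\Spec R/G]$ with $R$ strictly Henselian, collapse the descent spectral sequence onto $\HH^n(G,\mu_r(R))$, and identify $\mu_r(R)$ with $\mu_r(K)$ using strict Henselianity and the coprimality of $r$ to $\fieldchar K$. The extra details you supply (Hensel's lemma for $x^r-1$, the vanishing of higher cohomology over $\Spec R$) are exactly the implicit justifications in the paper's terser argument.
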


\begin{proof}
This reduces to showing that if $\mathcal X=[\Spec R/G]$,
then $\HH^n(\mathcal X,\mu_r)=\HH^n(G,\mu_r(K))$. By the descent spectral sequence we have
$$
\HH^n(\mathcal X,\mu_r)=\HH^n(G,\mu_r(R))=\HH^n(G,\mu_r(K)),
$$
where the last equality comes from $R$ being a strictly Henselian local ring.
\end{proof}

\section{The cohomology of a stacky curve}\label{stacky curve section}
In this section and the next, we fix the following data:
\begin{itemize}
\item
$\mathcal C$ is a tame one-dimensional separated Deligne-Mumford stack with trivial
generic stabilizer of finite type
over an algebraically closed field $\mathbbm k$,
\item $\pi:\mathcal G\rightarrow\mathcal C$
is a $\mu_r$-gerbe,
\item $g:\mathcal G\rightarrow C$ is the coarse moduli space (note that
$C$ is the coarse moduli space of both $\mathcal G$ and $\mathcal C$),
\item $p_1,\dots,p_n$ are the points of $C$ whose preimage in $\mathcal C$
have non-trivial stabilizers $G_i$,
\item the preimages of $p_1,\dots,p_n$ in $\mathcal G$ have stabilizers $E_1,\dots,E_n$,
so that for each $i$ we have the exact sequence of groups
$$
0\rightarrow \mu_r\rightarrow E_i\rightarrow G_i\rightarrow 0,
$$
\item and the trivial action of $G_i$ and $E_i$ on $\mathbbm k^{\times}$ (see Proposition
\ref{Meier structure}).
\end{itemize}
This can be summarized in the diagram
$$
\begin{tikzcd}
    \mathcal G\arrow[d,"\pi"]\arrow[dd, bend right, swap, "g"]\\
    \mathcal C\arrow[d,"f"]\\
    C.
\end{tikzcd}
$$

We open by making the following observation.

\begin{lemma}\label{reduction lemma}
Let $\mathcal X$ be a separated tame one-dimensional Deligne-Mumford stack
of finite type over a (not necessarily algebraically closed field) $\mathbbm k$.
If $\dim X\leq 1$, then
we have
$$
\HH^2(\mathcal X,\mathbb G_m)=\HH^2(\mathcal X_{red},\mathbb G_m).
$$
\end{lemma}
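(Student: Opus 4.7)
The plan is to compare $\mathbb G_m$-cohomology on $\mathcal X$ and $\mathcal X_{red}$ via the closed immersion $i:\mathcal X_{red}\hookrightarrow\mathcal X$, using the fact that the kernel of $\mathcal O_{\mathcal X}^\times\to i_*\mathcal O_{\mathcal X_{red}}^\times$ is built from quasi-coherent sheaves to which one can apply Grothendieck vanishing after descending to the coarse moduli space. Concretely, let $\mathcal N\subset\mathcal O_{\mathcal X}$ denote the nilradical. There is a short exact sequence of \'{e}tale sheaves on $\mathcal X$
$$1\to 1+\mathcal N\to\mathbb G_{m,\mathcal X}\to i_*\mathbb G_{m,\mathcal X_{red}}\to 1,$$
and since $i$ is a closed immersion, $\mathbf R^ki_*=0$ for $k>0$, so that $\HH^j(\mathcal X,i_*\mathbb G_m)=\HH^j(\mathcal X_{red},\mathbb G_m)$. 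The long exact sequence in cohomology reduces the claim to showing that $\HH^j(\mathcal X,1+\mathcal N)=0$ for $j=2,3$.

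For this, I would filter $1+\mathcal N$ by the subsheaves $1+\mathcal N^k$, a filtration that terminates in finitely many steps since $\mathcal X$ is noetherian and $\mathcal N$ is nilpotent. For each $k\geq 1$, the map $1+x\mapsto x$ yields a natural isomorphism
$$(1+\mathcal N^k)/(1+\mathcal N^{k+1})\xrightarrow{\sim}\mathcal N^k/\mathcal N^{k+1},$$
whose right-hand side is a quasi-coherent sheaf supported on $\mathcal X_{red}$. Induction along the filtration then reduces the problem to proving that $\HH^j(\mathcal X,\mathcal F)=0$ for all $j\geq 2$ and all quasi-coherent $\mathcal F$ on $\mathcal X$.

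Finally, let $f:\mathcal X\to X$ be the coarse moduli morphism. Since $\mathcal X$ is tame Deligne-Mumford, $f_*$ is exact on quasi-coherent sheaves (see \cite{AOV08}), so that $\mathbf R^qf_*\mathcal F=0$ for $q>0$. The Leray spectral sequence collapses to give $\HH^j(\mathcal X,\mathcal F)=\HH^j(X,f_*\mathcal F)$, and since $\dim X\leq 1$ Grothendieck vanishing yields $\HH^j(X,f_*\mathcal F)=0$ for $j\geq 2$. The main technical subtlety I expect lies in matching the \'{e}tale cohomology of $\mathcal X$ with the quasi-coherent cohomology used above and citing correctly the exactness of $f_*$ on $\mathrm{QCoh}(\mathcal X)$ in the tame setting, but both are standard.
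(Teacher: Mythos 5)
Your proposal is correct and follows essentially the same route as the paper: the short exact sequence $1\to 1+\mathcal N\to\mathbb G_m\to i_*\mathbb G_m\to 1$, reduction to vanishing of $\HH^2$ and $\HH^3$ of a quasi-coherent sheaf, and then exactness of the coarse-space pushforward in the tame setting combined with Grothendieck vanishing on the one-dimensional coarse space. The only cosmetic difference is that you filter $1+\mathcal N$ by the subsheaves $1+\mathcal N^k$, while the paper factors $\mathcal X_{red}\hookrightarrow\mathcal X$ through square-zero thickenings; these are the same induction.
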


\begin{proof}
Let $I$ be the sheaf of ideals of $\mathcal X_{red}$. Since $\mathcal X$ is finite type
over $\mathbbm k$, we may factor $i:\mathcal X_{red}\rightarrow\mathcal X$ through a sequence of
square-zero thickenings and hence may assume that $I$ is itself square-zero.

We then obtain an exact sequence
$$
1\rightarrow 1+I\rightarrow\mathbb G_m\rightarrow i_*\mathbb G_m\rightarrow 1
$$
with associated long exact sequence
$$
\HH^2(\mathcal X,I)\rightarrow\HH^2(\mathcal X,\mathbb G_m)\rightarrow
\HH^2(\mathcal X,i_*\mathbb G_m)\rightarrow\HH^3(\mathcal X,I).
$$
We claim that $\HH^2(\mathcal X,I)=\HH^3(\mathcal X,I)=0$, which would finish the proof
as we would then have
$$
\HH^2(\mathcal X,\mathbb G_m)=\HH^2(\mathcal X,i_*\mathbb G_m)=
\HH^2(\mathcal X_{red},\mathbb G_m).
$$

To prove the claim,
since $\mathcal X$ is tame, the pushforward $\pi_*:\text{QCoh}(\mathcal X)\rightarrow
\text{QCoh(X)}$ is exact, and hence it suffices to check that
$\HH^2(X,\pi_*I)=\HH^3(X,\pi_*I)=0$, which holds since \'{e}tale cohomology agrees
with Zariski cohomology for quasi-coherent sheaves and since all Zariski cohomology past $\HH^1$ vanishes
on $X$ by the Grothendieck vanishing theorem.
\end{proof}

\begin{note}
We use Lemma \ref{reduction lemma}
to without loss of generality treat our stacks as if they were reduced, though we will
occasionally directly translate statements into the non-reduced context.
\end{note}

Consider the Leray spectral sequence for the sheaf $\mathbb G_{m,\mathcal G}$ and
the map $\pi:\mathcal G\rightarrow\mathcal C$:
$$
E_2^{p,q}=\HH^p(\mathcal C,\mathbf R^q\pi_*\mathbb G_{m,\mathcal G})\implies
\HH^{p+q}(\mathcal G,\mathbb G_{m,\mathcal G}).
$$
Using the fact that $\pi_*\mathbb G_{m,\mathcal G}=\mathbb G_{m,\mathcal C}$,
the associated sequence of low-degree terms is

\begin{multline}\label{G to C low degree initial}
0\rightarrow\HH^1(\mathcal C,\mathbb G_m)\rightarrow\HH^1(\mathcal G,\mathbb G_m)\rightarrow
\HH^0(\mathcal C,\mathbf R^1\pi_*\mathbb G_m)\rightarrow\HH^2(\mathcal C,\mathbb G_m)\rightarrow\\
\ker\left[\HH^2(\mathcal G,\mathbb G_m)\rightarrow\HH^0(\mathcal C,\mathbf R^2\pi_*\mathbb G_m)\right]
\rightarrow
\HH^1(\mathcal C,\mathbf R^1\pi_*\mathbb G_m)\rightarrow\HH^3(\mathcal C,\mathbb G_m).
\end{multline}

Therefore, before we proceed to our computation of $\HH^2(\mathcal G,\mathbb G_m)$,
we must first understand the cohomology of $\mathbb G_m$ on $\mathcal C$, as well
as $\mathbf R^2\pi_*\mathbb G_m$.
We open our analysis by using the Grothendieck spectral sequence to relate
$\mathbf R^*f_*\mathbb G_m$, $\mathbf R^*g_*\mathbb G_m$, and $\mathbf R^*\pi_*\mathbb G_m$.

\subsection{Grothendieck spectral sequence}
To understand our various higher pushforwards, we consider the Grothendieck spectral sequence
for the composition $g=f\circ\pi$ and the sheaf $\mathbb G_m$:
$$
E_2^{pq}=\left(\mathbf R^pf_*\circ\mathbf R^q\pi_*\right)(\mathbb G_m)\implies
\mathbf R^{p+q}g_*(\mathbb G_m).
$$
Accounting for the fact that the pushforward of $\mathbb G_m$ is $\mathbb G_m$,
the beginning of the sequence of low-degree terms is
$$
0\rightarrow\mathbf R^1f_*\mathbb G_m\xrightarrow{\alpha}\mathbf R^1g_*\mathbb G_m
\xrightarrow{\beta} f_*(\mathbf R^1\pi_*\mathbb G_m)\xrightarrow{\gamma}
\mathbf R^2f_*\mathbb G_m\xrightarrow{\delta}\mathbf R^2g_*\mathbb G_m.
$$
We may extract two short exact sequences from this:
$$
0\rightarrow\mathbf R^1f_*\mathbb G_m\rightarrow\mathbf R^1g_*\mathbb G_m
\rightarrow \ker\gamma\rightarrow 0
$$
and
$$
0\rightarrow \ker\gamma \rightarrow f_*(\mathbf R^1\pi_*\mathbb G_m)
\rightarrow \ker\delta\rightarrow 0.
$$

\begin{lemma}\label{higher cohomology}
For all $i\geq 2$,
$$
\HH^i(C,\mathbf R^1g_*\mathbb G_m)\cong\HH^i(C,f_*(\mathbf R^1\pi_*\mathbb G_m)).
$$
The isomorphism holds for $i=1$ if $\mathbf R^2f_*\mathbb G_m\rightarrow
\mathbf R^2g_*\mathbb G_m$ is injective.
\end{lemma}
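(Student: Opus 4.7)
The plan is to take \'{e}tale cohomology on $C$ of the two short exact sequences extracted from the Grothendieck spectral sequence, and show that the outer terms in the resulting long exact sequences vanish in the relevant range. The whole argument rests on the observation that the higher direct images $\mathbf R^q f_*\mathbb G_m$ ($q\geq 1$) are supported at the finitely many stacky points $p_1,\dots,p_n$.

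More precisely, I would first verify that $\mathbf R^q f_*\mathbb G_m$ is a skyscraper sheaf on $C$ for $q\geq 1$: away from $p_1,\dots,p_n$ the map $f$ is an isomorphism, so the higher direct images vanish there, while the stacky points are closed points of $C$ with residue field $\mathbbm k$ (algebraically closed). Any \'{e}tale sheaf supported at such points is of the form $i_*M$ for a closed immersion $i:\Spec\mathbbm k\hookrightarrow C$, so by the collapse of the Leray spectral sequence for the closed immersion one has $\HH^j(C,i_*M)=\HH^j(\Spec\mathbbm k,M)=0$ for $j\geq 1$. Consequently, $\HH^j(C,\mathbf R^q f_*\mathbb G_m)=0$ for all $j\geq 1$ and $q\geq 1$, and the same vanishing holds for $\ker\delta$, which is a subsheaf of the skyscraper $\mathbf R^2 f_*\mathbb G_m$.

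Feeding this into the long exact sequence of the first SES
$$0\rightarrow\mathbf R^1f_*\mathbb G_m\rightarrow\mathbf R^1g_*\mathbb G_m\rightarrow \ker\gamma\rightarrow 0$$
gives $\HH^i(C,\mathbf R^1g_*\mathbb G_m)\cong\HH^i(C,\ker\gamma)$ for every $i\geq 1$, since both neighboring terms $\HH^{i-1}(C,\mathbf R^1f_*\mathbb G_m)$ (when $i\geq 2$, and for $i=1$ the edge is injective from the beginning of the LES) and $\HH^i(C,\mathbf R^1f_*\mathbb G_m)$ vanish. Similarly, the long exact sequence of the second SES
$$0\rightarrow\ker\gamma\rightarrow f_*(\mathbf R^1\pi_*\mathbb G_m)\rightarrow\ker\delta\rightarrow 0$$
has $\HH^j(C,\ker\delta)=0$ for $j\geq 1$, so for $i\geq 2$ the two neighboring terms vanish and we obtain $\HH^i(C,\ker\gamma)\cong\HH^i(C,f_*(\mathbf R^1\pi_*\mathbb G_m))$. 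Chaining the two isomorphisms yields the claim for $i\geq 2$.

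The only delicate point is the $i=1$ case, and it is the main obstacle worth flagging: the long exact sequence gives only
$$\HH^0(C,\ker\delta)\to\HH^1(C,\ker\gamma)\to\HH^1(C,f_*(\mathbf R^1\pi_*\mathbb G_m))\to 0,$$
so to get injectivity I need the connecting map out of $\HH^0(C,\ker\delta)$ to vanish. The cleanest sufficient condition is that $\ker\delta=0$ outright, and this is precisely the hypothesis that $\mathbf R^2f_*\mathbb G_m\rightarrow\mathbf R^2g_*\mathbb G_m$ is injective, which is exactly the extra assumption recorded in the lemma for $i=1$.
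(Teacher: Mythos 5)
Your proposal is correct and follows essentially the same route as the paper: both extract the two short exact sequences from the five-term sequence of the Grothendieck spectral sequence, use that $\mathbf R^qf_*\mathbb G_m$ for $q\geq 1$ (and hence the subsheaf $\ker\delta$) is supported on finitely many points so its higher cohomology vanishes, and identify the vanishing of $\HH^0(C,\ker\delta)$ -- guaranteed by injectivity of $\mathbf R^2f_*\mathbb G_m\rightarrow\mathbf R^2g_*\mathbb G_m$ -- as the extra input needed for $i=1$. The only quibble is a harmless off-by-one in naming the vanishing neighbors in the first long exact sequence (the relevant terms are $\HH^i$ and $\HH^{i+1}$ of $\mathbf R^1f_*\mathbb G_m$, not $\HH^{i-1}$ and $\HH^i$), which does not affect the argument since all positive-degree cohomology of that skyscraper vanishes.
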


\begin{proof}
Notice that since $\mathcal C$ has trivial generic stabilizer, Corollary \ref{Meier stalk} says that
for $i\geq 1$ the sheaves
$\mathbf R^if_*\mathbb G_m$
are supported on a finite set (the points where $\mathcal C$ has non-trivial stack structure), and so
their higher cohomology vanishes. Hence for $i\geq 1$
$$
0\rightarrow\HH^i(C,\mathbf R^1g_*\mathbb G_m)\rightarrow
\HH^i(C,\ker\gamma)\rightarrow0.
$$
That is,
$$
\HH^i(C,\mathbf R^1g_*\mathbb G_m)\cong\HH^i(C,\ker\gamma).
$$
But using the second exact sequence from above gives, in degree 0 and 1,
$$
\HH^0(C,f_*(\mathbf R^1\pi_*\mathbb G_m))\rightarrow\HH^0(C,\ker\delta)\rightarrow
\HH^1(C,\ker\gamma)\rightarrow\HH^1(C,f_*(\mathbf R^1\pi_*\mathbb G_m))\rightarrow0
$$
and, in degree above 1,
$$
0\rightarrow\HH^{i}(C,\ker\gamma)\rightarrow\HH^{i}(C,
f_*(\mathbf R^1\pi_*\mathbb G_m))\rightarrow0.
$$
This establishes that for all $i\geq 2$ we have
$$
\HH^i(C,\mathbf R^1g_*\mathbb G_m)\cong\HH^i(C,\ker\gamma)\cong
\HH^i(C,f_*(\mathbf R^1\pi_*\mathbb G_m))
$$
and that the isomorphism holds for $i=1$ if
$\HH^0(C,\ker\delta)=0$, i.e. if $\mathbf R^2f_*\mathbb G_m\rightarrow
\mathbf R^2g_*\mathbb G_m$
is injective on global sections. In particular this holds if the map of sheaves is injective.
\end{proof}

\begin{note}\label{R^2 injectivity note}
We will see in Corollary \ref{R^2 injectivity} that $\mathbf R^2f_*\mathbb G_m\rightarrow
\mathbf R^2g_*\mathbb G_m$ is injective precisely
when $\mathcal G$ is a root gerbe over $\mathcal C$, and that in fact the map on sheaves
is determined by the map on global sections.
\end{note}

\begin{lemma}\label{R^2k}
For all $k\geq1$,
$$
\mathbf R^{2k}g_*\mathbb G_m=\bigoplus_{i=1}^n p_{i_*}\HH^{2k}(E_i,\mathbbm k^{\times}).
$$
\end{lemma}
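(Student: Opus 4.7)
The plan is to apply Corollary \ref{Meier stalk} directly to the map $g:\mathcal G\to C$, which is a coarsening, in order to identify the stalks of $\mathbf R^{2k}g_*\mathbb G_m$ at every geometric point of $C$, and then assemble the resulting skyscrapers.

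First, fix a geometric point $x:\Spec\mathbbm k\to C$ (recall $\mathbbm k$ is algebraically closed, so every geometric point has residue field $\mathbbm k$) and let $H_x = (\mathcal I_{\mathcal G/C})_x$ be the relative stabilizer of $\mathcal G\to C$ at $x$. By Corollary \ref{Meier stalk}, for $2k \geq 2$ we have
$$(\mathbf R^{2k}g_*\mathbb G_m)_x = \HH^{2k}(H_x, \mathbbm k^{\times}).$$
I would then split into two cases. If $x = p_i$, then by hypothesis the stabilizer of the preimage in $\mathcal G$ is $E_i$, so the stalk is $\HH^{2k}(E_i,\mathbbm k^{\times})$. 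If $x$ is not one of the $p_i$, then the image of $x$ in $\mathcal C$ has trivial stabilizer, so the stabilizer in $\mathcal G$ is precisely $\mu_r$ (from the gerbe structure).

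Next, I would show $\HH^{2k}(\mu_r,\mathbbm k^{\times}) = 0$ for all $k\geq 1$. Since $\mu_r$ acts trivially on $\mathbbm k^{\times}$ (per Proposition \ref{Meier structure} and our conventions) and $\mu_r\cong\mathbb Z/r$ is cyclic, the even-degree cohomology in question is $\mathbbm k^{\times}/(\mathbbm k^{\times})^r$, which vanishes because $\mathbbm k$ is algebraically closed and $r$ is coprime to $\fieldchar\mathbbm k$ (tameness). Consequently $\mathbf R^{2k}g_*\mathbb G_m$ is a sheaf on $C$ whose stalks vanish outside the finite set $\{p_1,\dots,p_n\}$.

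Finally, I would conclude by invoking the standard fact that an étale sheaf on the one-dimensional scheme $C$ which is supported on a finite set of closed points $p_1,\dots,p_n$ with stalks $A_i$ canonically decomposes as $\bigoplus_{i=1}^n p_{i_*}A_i$: the adjunction maps $\mathbf R^{2k}g_*\mathbb G_m \to p_{i_*}(\mathbf R^{2k}g_*\mathbb G_m)_{p_i} = p_{i_*}\HH^{2k}(E_i,\mathbbm k^{\times})$ assemble into an isomorphism, verified stalkwise. I do not anticipate a serious obstacle; the only step requiring care is the bookkeeping for the relative stabilizers $H_x$ (distinguishing $E_i$ from the generic $\mu_r$) and the vanishing of $\HH^{\mathrm{even}}(\mu_r,\mathbbm k^{\times})$, both of which are short.
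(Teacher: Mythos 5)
Your proposal is correct and follows essentially the same route as the paper: identify the stalks of $\mathbf R^{2k}g_*\mathbb G_m$ via Corollary \ref{Meier stalk}, observe that away from the $p_i$ the relative stabilizer is $\mu_r$ whose even cohomology with coefficients in $\mathbbm k^{\times}$ vanishes (Proposition \ref{cyclic cohomology}), and conclude that the sheaf is a finite direct sum of skyscrapers with stalks $\HH^{2k}(E_i,\mathbbm k^{\times})$. The only cosmetic difference is that you re-derive the vanishing of $\HH^{2k}(\mu_r,\mathbbm k^{\times})$ by hand where the paper cites its appendix, and note that at a general geometric point (e.g.\ the generic one) the residue field need not be $\mathbbm k$ itself, though the vanishing holds over any algebraically closed field so nothing breaks.
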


\begin{proof}
By Corollary \ref{Meier stalk} and Proposition \ref{cyclic cohomology}, for all $x\neq p_1,\dots,p_n$ we have
$(\mathbf R^{2k}g_*\mathbb G_m)_x=\HH^{2k}(\mu_r,\mathbbm k^{\times})=0$.
Therefore $\mathbf R^{2k}g_*\mathbb G_m$ is supported at the $p_i$'s and hence
$$
\mathbf R^{2k}g_*\mathbb G_m=\bigoplus_{i=1}^n(\mathbf R^{2k}g_*\mathbb G_m)_{p_i}=
\bigoplus_{i=1}^n p_{i_*}\HH^{2k}(E_i,\mathbbm k^{\times}),
$$
by Corollary \ref{Meier stalk}.
\end{proof}

\subsection{The cohomology of $\mathbb G_m$ on $\mathcal C$}

\begin{lemma}\label{R^kf}
For all $k\geq 2$,
$$
\mathbf R^kf_*\mathbb G_m=\bigoplus_{i=1}^n p_{i_*}\HH^k(G_i,\mathbbm k^{\times}).
$$
\end{lemma}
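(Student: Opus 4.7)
The plan is to mirror the argument used for Lemma \ref{R^2k}: compute the stalks of $\mathbf R^kf_*\mathbb G_m$ via Corollary \ref{Meier stalk}, observe that the sheaf is supported on the finite stacky locus $\{p_1,\dots,p_n\}$, and then conclude via the standard fact that an \'{e}tale sheaf on $C$ whose geometric stalks vanish outside finitely many closed points is the direct sum of the pushforwards of its stalks at those points.

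More concretely, I would first apply Corollary \ref{Meier stalk} directly to the coarsening $f:\mathcal C\rightarrow C$: at a geometric point $x:\Spec K\rightarrow C$ with relative stabilizer $H$, one has
$$
(\mathbf R^kf_*\mathbb G_m)_x=\HH^k(H,K^{\times})
$$
for all $k\geq2$. I would then invoke the hypothesis that $\mathcal C$ has trivial generic stabilizer: this forces $H$ to be trivial at every geometric point lying over a point of $C$ other than $p_1,\dots,p_n$, so the corresponding stalks vanish for $k\geq1$. At each closed point $p_i$, the geometric residue field is $\mathbbm k$ (since $\mathbbm k$ is algebraically closed and $C$ is of finite type over $\mathbbm k$), so the stalk reads $\HH^k(G_i,\mathbbm k^{\times})$. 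Packaging these skyscraper stalks yields
$$
\mathbf R^kf_*\mathbb G_m=\bigoplus_{i=1}^n(\mathbf R^kf_*\mathbb G_m)_{p_i}=\bigoplus_{i=1}^n p_{i_*}\HH^k(G_i,\mathbbm k^{\times}).
$$

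Since the heavy lifting is already contained in Corollary \ref{Meier stalk}, I do not anticipate any genuine obstacle. The only item warranting explicit mention is the skyscraper decomposition: for an \'{e}tale sheaf on $C$ whose geometric stalks vanish outside a finite set of closed points, the canonical map from the direct sum of pushforwards of stalks to the sheaf is an isomorphism, which may be verified on stalks. In this sense, the lemma is essentially a direct translation of the Meier stalk formula once the trivial-generic-stabilizer hypothesis is used to kill the sheaf away from the stacky points.
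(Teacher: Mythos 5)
Your proposal is correct and follows the paper's own argument essentially verbatim: compute the stalks via Corollary \ref{Meier stalk}, use the trivial generic stabilizer hypothesis to see the sheaf is supported at $p_1,\dots,p_n$, and conclude with the skyscraper decomposition. The extra remarks on geometric residue fields and the stalkwise verification of the skyscraper isomorphism are fine elaborations of details the paper leaves implicit.
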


\begin{proof}
Let $x$ be a geometric point of $C$ and $G_x$ the stabilizer of the preimage of $x$ in $\mathcal C$.
By Corollary \ref{Meier stalk}, $(\mathbf R^kf_*\mathbb G_m)_x=\HH^k(G_x,\mathbbm k^{\times})$,
and this vanishes away from $p_1,\dots,p_n$ since $G_x=0$ generically.
Hence $\mathbf R^kf_*\mathbb G_m$ is supported
at $p_1,\dots,p_n$, and the lemma follows.
\end{proof}

\begin{proposition}\label{C G_m cohomology}
For all $k\geq 2$
we have that
$$
\HH^k(\mathcal C,\mathbb G_m)=\HH^0(C,\mathbf R^kf_*\mathbb G_m)=
\bigoplus_{i=1}^n\HH^k(G_i,\mathbbm k^{\times}),
$$
where, for even $k$, this is concentrated at the singular points of $\mathcal C_{red}$.
\end{proposition}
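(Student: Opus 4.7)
The plan is to run the Leray spectral sequence
$$E_2^{p,q} = \HH^p(C, \mathbf R^q f_* \mathbb G_m) \Rightarrow \HH^{p+q}(\mathcal C, \mathbb G_m)$$
for $f : \mathcal C \to C$ and show that for $k \geq 2$ the only surviving term on the total-degree-$k$ diagonal is $E_2^{0,k}$, which by Lemma \ref{R^kf} is exactly $\bigoplus_i \HH^k(G_i, \mathbbm k^\times)$.

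I would first handle the rows $q \geq 1$. By Proposition \ref{Meier structure} (or Corollary \ref{Meier stalk}), the stalk of $\mathbf R^q f_* \mathbb G_m$ at a geometric point $x \in C$ is the group cohomology of the stabilizer $G_x$, and $G_x$ is trivial away from $p_1, \dots, p_n$ by the trivial-generic-stabilizer hypothesis. Thus $\mathbf R^q f_* \mathbb G_m$ is a skyscraper sheaf supported on finitely many closed points of $C$, so $\HH^p(C, \mathbf R^q f_* \mathbb G_m) = 0$ for all $p \geq 1$. For the row $q = 0$, $f_* \mathbb G_m = \mathbb G_m$, and Tsen's Theorem \ref{Tsen's Theorem} gives $\HH^p(C, \mathbb G_m) = 0$ for $p \geq 2$ because $C$ is a one-dimensional scheme over the algebraically closed field $\mathbbm k$. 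Consequently, for $k \geq 2$ every term on the diagonal $p + q = k$ vanishes except $E_2^{0,k}$, and that entry is not the source or target of any non-trivial differential, so it persists to $E_\infty$. This yields
$$\HH^k(\mathcal C, \mathbb G_m) = \HH^0(C, \mathbf R^k f_* \mathbb G_m) = \bigoplus_{i=1}^n \HH^k(G_i, \mathbbm k^\times),$$
using Lemma \ref{R^kf} for the second equality.

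For the even-degree refinement, I would argue that the summands attached to the smooth points of $\mathcal C_{red}$ vanish. At a smooth point $p_i$, the tame stabilizer $G_i$ acts faithfully on the one-dimensional tangent space, hence embeds as a finite subgroup of $\mathbbm k^\times$ and is therefore cyclic. For a cyclic group $G_i$ acting trivially on a divisible abelian group $A$, the standard periodic resolution gives $\HH^{2k}(G_i, A) = A / |G_i| A$, which vanishes for $A = \mathbbm k^\times$ since $\mathbbm k$ is algebraically closed. Thus the even-degree summands survive only at the singular points of $\mathcal C_{red}$, which is the desired statement.

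The main obstacle is essentially bookkeeping rather than substance: one must verify that the reduction-to-reduced step in Lemma \ref{reduction lemma} is compatible with the "singular points of $\mathcal C_{red}$" phrasing, and that at smooth points of $\mathcal C_{red}$ the stabilizer is genuinely cyclic (which follows from faithful action on a $1$-dimensional tangent space, together with tameness). Everything else is a clean spectral-sequence collapse supported by the pointwise stalk formulas already established.
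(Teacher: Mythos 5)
Your proof is correct and follows essentially the same route as the paper: the Leray spectral sequence for $f:\mathcal C\to C$ collapses because the higher pushforwards are supported on finitely many points and Tsen's Theorem kills the bottom row, and the even-degree refinement comes from cyclicity of stabilizers at smooth points together with the vanishing of even cohomology of cyclic groups acting trivially on $\mathbbm k^{\times}$. Your extra justification that smooth-point stabilizers are cyclic (via the faithful action on the one-dimensional tangent space) is a detail the paper asserts without proof, but it does not change the argument.
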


\begin{proof}
We use the Leray spectral sequence
$$
E_2^{p,q}=\HH^p(C,\mathbf R^qf_*\mathbb G_m)\implies\HH^{p+q}(\mathcal C,\mathbb G_m).
$$
The terms $E_2^{p,q}$ for $(p,q)\geq(1,1)$ all vanish, since the sheaves
$\mathbf R^qf_*\mathbb G_m$ are supported on finite sets (by Corollary \ref{Meier stalk})
and hence have vanishing
higher cohomology. Additionally, the terms vanish for $(p,0)$ when
$p\geq 2$ by Theorem \ref{Tsen's Theorem}.
Therefore for $k\geq 2$ the only non-zero term in the filtration
$$
0\subseteq H^kF^k\subseteq\dots\subseteq H^0F^k=\HH^k(\mathcal C,\mathbb G_m)
$$
is the term $E_2^{0,k}$, and so by Lemma \ref{R^kf}
$$
\HH^k(\mathcal C,\mathbb G_m)=E_2^{0,k}=\HH^0(C,\mathbf R^kf_*\mathbb G_m)=
\bigoplus_{i=1}^n\HH^k(G_i,\mathbbm k^{\times}).
$$

The last statement follows since the only possible stabilizers
at smooth points are cyclic (this is using that $\mathcal C$ has trivial
generic stabilizer), and cyclic groups have vanishing even cohomology
by Proposition \ref{cyclic cohomology}.
\end{proof}

\begin{note}\label{C G_m cohomology note}
We therefore get a characterization of when restriction to an open
$\HH^2(\mathcal C,\mathbb G_m)\rightarrow
\HH^2(\mathcal U,\mathbb G_m)$ fails to be injective:
injectivity fails
precisely when $\mathcal C\setminus\mathcal U$ contains a singular point of $\mathcal C_{red}$
whose stabilizer has non-trivial second cohomology. This explains where the regularity condition
in Proposition \ref{AM} comes from, and shows that it is truly necessary.
\end{note}

\begin{example}\label{stacky Tsen example}
Let $G=\mu_n\rtimes\mathbb Z/2$ and set
$$
\mathcal C=\left[\frac{\mathbbm k[x,y]/(xy)}{G}\right],
$$
where $\mu_n$ acts as $t\cdot(x,y)=(tx,t^ay)$ such that $a^2\equiv1\pmod n$, and where
$\mathbb Z/2$ acts by interchanging $x$ and $y$. Then $\mathcal C$ has a node with a
$BG$ at the origin, and its coarse moduli space is $\mathbb A^1$.
A somewhat involved group cohomology computation in an upcoming work with Newman
will show that, for certain values of $a$ and $n$, $\HH^2(G,\mathbbm k^{\times})\neq0$.
This gives, in a sense, the simplest possible example of a (generically schemey)
stacky curve with non-trivial Brauer group, since a node is the simplest singularity type
which supports non-cyclic stabilizers.
\end{example}

\section{The Brauer group of a $\mu_r$-gerbe}\label{mu_r gerbe section}
In this section we will complete our computation of the Brauer group of a $\mu_r$-gerbe
over $\mathcal C$. Since such a gerbe corresponds to a class in $\HH^2(\mathcal C,\mu_r)$, we begin with
one final cohomology computation on $\mathcal C$ to aide in our classification
of root gerbes. This will be one of the crucial ingredients in Theorem \ref{main Brauer statement}.

\subsection{Classifying $\mu_r$-gerbes over $\mathcal C$}

\begin{proposition}\label{C mu_r cohomology}
The second cohomology of $\mu_r$ fits into the following sequence
$$
\HH^2(C,\mu_r)\rightarrow\HH^2(\mathcal C,\mu_r)\rightarrow\bigoplus_{i=1}^n
\HH^2(G_i,\mu_r)\rightarrow0.
$$
That is, a $\mu_r$-gerbe over $\mathcal C$ is completely determined by the choice
of a $\mu_r$-gerbe on $C$ and a $\mu_r$-gerbe at each non-trivial stabilizer of $\mathcal C$.
\end{proposition}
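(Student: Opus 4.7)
The plan is to adapt the proof of Proposition \ref{C G_m cohomology} essentially verbatim, using $\mu_r$ in place of $\mathbb G_m$ and Proposition \ref{Meier mu_r} in place of Corollary \ref{Meier stalk}. The fundamental input will be the Leray spectral sequence
$$E_2^{p,q} = \HH^p(C, \mathbf R^q f_*\mu_r) \implies \HH^{p+q}(\mathcal C, \mu_r).$$
By Proposition \ref{Meier mu_r}, for $q \geq 1$ the stalks of $\mathbf R^q f_* \mu_r$ are $\HH^q(G_x, \mu_r)$, which vanish at the generic point (since the generic stabilizer is trivial) and equal $\HH^q(G_i, \mu_r)$ at each $p_i$. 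Hence these sheaves are skyscrapers supported on $\{p_1, \dots, p_n\}$, so $E_2^{p,q} = 0$ for $p \geq 1$ and $q \geq 1$, while $E_2^{0,q} = \bigoplus_{i=1}^n \HH^q(G_i, \mu_r)$.

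Next I would analyze the filtration on $\HH^2(\mathcal C, \mu_r)$. The only potentially nonzero terms on the total-degree-$2$ antidiagonal are $E_\infty^{2,0}$ and $E_\infty^{0,2}$, since $E_2^{1,1} = 0$. For the latter, the only possibly nonzero outgoing differential is $d_3 : E_3^{0,2} \to E_3^{3,0}$, which lands in a subquotient of $\HH^3(C, \mu_r)$; combining the Kummer sequence with Tsen's Theorem \ref{Tsen's Theorem} gives $\HH^3(C, \mu_r) = 0$, since $\HH^2(C, \mathbb G_m)$ and $\HH^3(C, \mathbb G_m)$ both vanish. Hence $E_\infty^{0,2} = E_2^{0,2} = \bigoplus_i \HH^2(G_i, \mu_r)$, while $E_\infty^{2,0}$ is the quotient of $\HH^2(C, \mu_r)$ by the image of $d_2 : E_2^{0,1} \to E_2^{2,0}$.

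Reading off the resulting short exact sequence
$$0 \to E_\infty^{2,0} \to \HH^2(\mathcal C, \mu_r) \to \bigoplus_{i=1}^n \HH^2(G_i, \mu_r) \to 0$$
and precomposing with the surjection $\HH^2(C, \mu_r) \twoheadrightarrow E_\infty^{2,0}$ yields exactly the claimed sequence. There is no real obstacle here; the only nonroutine check is the vanishing of $\HH^3(C, \mu_r)$ needed to guarantee surjectivity on the right, and this is purely a Kummer/Tsen calculation. The geometric interpretation of the proposition then follows immediately from the classification of $\mu_r$-gerbes by $\HH^2(-, \mu_r)$ recalled in Section \ref{gerbe background}.
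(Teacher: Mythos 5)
Your proposal is correct and follows essentially the same route as the paper: the Leray spectral sequence for $f$ and $\mu_r$, Proposition \ref{Meier mu_r} to see that $\mathbf R^qf_*\mu_r$ is a skyscraper for $q\geq 1$, and the vanishing of $\HH^3(C,\mu_r)$ (via Kummer and Tsen) to kill the $d_3$ out of $E^{0,2}$. The only cosmetic difference is that you spell out the Kummer/Tsen computation of $\HH^3(C,\mu_r)=0$ where the paper just cites the Kummer sequence.
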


\begin{proof}
Consider the Leray spectral sequence
$$
E_2^{p,q}=\HH^p(C,\mathbf R^qf_*\mu_r)\implies\HH^{p+q}(\mathcal C,\mu_r).
$$
We use Proposition \ref{Meier mu_r} to identify the stalks of $\mathbf R^qf_*\mu_r$
with the group cohomology of $\mu_r$. Then for $q\geq 1$ the sheaf
$\mathbf R^qf_*\mu_r$ is supported at finitely many points and hence has vanishing
higher cohomology.
We then get
$$
E_3^{3,0}=\HH^3(C,\mu_r)=0
$$
by the Kummer sequence,
$$
E_2^{2,1}=\HH^2(C,\mathbf R^1f_*\mu_r)=0,\text{ and}
$$
$$
E_2^{1,1}=\HH^1(C,\mathbf R^1f_*\mu_r)=0.
$$
Then the relevant terms in the filtration on $\HH^2$ are:
$$
\begin{aligned}
    E_{\infty}^{2,0}&=E_3^{2,0}=\frac{\HH^2(C,\mu_r)}
    {\im\left(E_2^{0,1}\rightarrow\HH^2(C,\mu_r)\right)}\\
    E_{\infty}^{1,1}&=0\\
    E_{\infty}^{0,2}&=\ker(E_3^{0,2}\rightarrow E_3^{3,0}=0)\\
    &=E_3^{0,2}=\ker(E_2^{0,2}\rightarrow E_2^{2,1}=0)\\
    &=\HH^0(C,\mathbf R^2f_*\mu_r).
\end{aligned}
$$

Therefore $\HH^2(\mathcal C,\mu_r)$ surjects onto
$\HH^0(C,\mathbf R^2f_*\mu_r)=\bigoplus_{i=1}^n\HH^2(G_i,\mu_r)$
with kernel equal to a quotient of $\HH^2(C,\mu_r)$, as desired.
\end{proof}

\begin{corollary}\label{root fiber}
A $\mu_r$-gerbe on $\mathcal C$ is a root gerbe if and only if it is a root gerbe restricted
to each $BG_i$.
\end{corollary}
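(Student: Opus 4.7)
The plan is to apply Proposition \ref{root injectivity} both to $\pi:\mathcal G\to\mathcal C$ and to each fiber $BE_i\to BG_i$, thereby reducing the biconditional to a statement about Kummer maps. Recall from the proof of Proposition \ref{root injectivity} that a $\mu_r$-gerbe is a root gerbe precisely when its class is annihilated by the Kummer map $\HH^2(-,\mu_r)\to\HH^2(-,\mathbb G_m)$. So the task becomes: show that $[\mathcal G]$ is killed by Kummer on $\mathcal C$ if and only if each restriction $[\mathcal G|_{BG_i}]$ is killed by Kummer on $BG_i$.

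The key step is to produce the commutative square
$$
\begin{tikzcd}
\HH^2(\mathcal C,\mu_r) \arrow[r, two heads] \arrow[d] & \bigoplus_{i=1}^n \HH^2(G_i,\mu_r) \arrow[d] \\
\HH^2(\mathcal C,\mathbb G_m) \arrow[r, "\sim"] & \bigoplus_{i=1}^n \HH^2(G_i,\mathbbm k^\times)
\end{tikzcd}
$$
obtained by comparing the Leray spectral sequence for $f:\mathcal C\to C$ with coefficients $\mu_r$ (Proposition \ref{C mu_r cohomology}, giving the top row) to that with coefficients $\mathbb G_m$ (Proposition \ref{C G_m cohomology}, giving the bottom row). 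The vertical maps are induced by the inclusion $\mu_r\hookrightarrow\mathbb G_m$, and on the right are the direct sum of the componentwise Kummer maps under the stalk identifications from Proposition \ref{Meier mu_r} and Corollary \ref{Meier stalk}. Commutativity is naturality of the Leray spectral sequence with respect to $\mu_r\to\mathbb G_m$. Note that the bottom arrow is an isomorphism because the remaining Leray contributions vanish for $\mathbb G_m$, while the top is only a surjection.

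One final bookkeeping point is that the top edge map sends $[\mathcal G]$ to the tuple $([\mathcal G|_{BG_i}])_i$ of honest restrictions to the residual gerbes; this follows from naturality of the Leray edge map applied to the square $BG_i\hookrightarrow\mathcal C$ lying over $p_i\hookrightarrow C$. Granting this, a diagram chase finishes: $\mathcal G$ is a root gerbe iff $[\mathcal G]$ dies in $\HH^2(\mathcal C,\mathbb G_m)$, iff (by the bottom isomorphism, commutativity, and surjectivity on the top) each $[\mathcal G|_{BG_i}]$ dies in $\HH^2(G_i,\mathbbm k^\times)$, iff each restriction $\mathcal G|_{BG_i}$ is a root gerbe. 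The main obstacle is really only the verification that the Leray edge map coincides with literal restriction to residual gerbes, which is a standard naturality check but easy to skip; the stalk identifications Meier provides make this transparent.
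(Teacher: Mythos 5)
Your proposal is correct and follows essentially the same route as the paper: both hinge on the commutative square comparing the surjection of Proposition \ref{C mu_r cohomology} with the isomorphism of Proposition \ref{C G_m cohomology} via the Kummer maps, then chase the class $[\mathcal G]$ around it. The only difference is that you spell out the naturality check identifying the edge map with honest restriction to the residual gerbes, which the paper leaves implicit.
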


\begin{proof}
    Applying the Kummer sequence simultaneously to the three components of
    the exact sequence in Proposition \ref{C mu_r cohomology} yields
    $$
    \begin{tikzcd}
        \HH^2(C,\mu_r)\arrow[r]\arrow[d] & \HH^2(\mathcal C,\mu_r)
        \arrow[r]\arrow[d] & \displaystyle\bigoplus_{i=1}^n\HH^2(G_i,\mu_r)\arrow[d]\\
        \HH^2(C,\mathbb G_m)=0\arrow[r] & \HH^2(\mathcal C,\mathbb G_m)
        \arrow[r,"\cong"] & \displaystyle\bigoplus_{i=1}^n\HH^2(G_i,\mathbbm k^{\times}),
    \end{tikzcd}
    $$
    where the bottom right isomorphism comes from Proposition \ref{C G_m cohomology}.
    A $\mu_r$-gerbe on $\mathcal C$ is a root gerbe if and only if its image in
    $\HH^2(\mathcal C,\mathbb G_m)$ is zero, which happens if and only if, therefore,
    its image in $\HH^2(G_i,\mathbbm k^{\times})$ is zero. That is, if and only
    if it is a root gerbe on each $BG_i$.
\end{proof}

\begin{corollary}\label{R^2 injectivity}
The $\mu_r$-gerbe $\pi:\mathcal G\rightarrow\mathcal C$ is a root gerbe if and only if the natural map
$\mathbf R^2f_*\mathbb G_m\rightarrow\mathbf R^2g_*\mathbb G_m$ is injective.
\end{corollary}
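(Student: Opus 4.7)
The plan is to identify the natural map $\delta:\mathbf R^2f_*\mathbb G_m\to\mathbf R^2g_*\mathbb G_m$ with an edge map in the Grothendieck spectral sequence for $g=f\circ\pi$, reduce its injectivity to a stalkwise transgression calculation, and then recognize that transgression as the obstruction to $\mathcal G|_{BG_i}$ being a root gerbe, so that Corollary \ref{root fiber} finishes the job.

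First, I would invoke the five-term exact sequence
$$f_*(\mathbf R^1\pi_*\mathbb G_m)\xrightarrow{\gamma}\mathbf R^2f_*\mathbb G_m\xrightarrow{\delta}\mathbf R^2g_*\mathbb G_m$$
already extracted in the discussion preceding Lemma \ref{higher cohomology}, so that injectivity of $\delta$ is equivalent to $\gamma=0$. By Proposition \ref{R^1g_*} the source simplifies to $f_*\underline{\mathbb Z/r}=\underline{\mathbb Z/r}$, whereas by Lemma \ref{R^kf} the target is a skyscraper supported at $\{p_1,\dots,p_n\}$. Consequently, vanishing of $\gamma$ is a stalk-by-stalk question at each $p_i$.

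Second, I would compute the stalk of $\gamma$ at $p_i$ by passing to a strictly Henselian étale neighborhood in which $\mathcal C$ is presented as $[\Spec R/G_i]$ and, since any $\mu_r$-gerbe is étale locally trivial, $\mathcal G$ is presented as $[\Spec R/E_i]$ with $E_i$ acting through its quotient $G_i$. Then, by Proposition \ref{Meier structure} and Corollary \ref{Meier stalk}, the Grothendieck spectral sequence for $g=f\circ\pi$ reduces stalkwise at $p_i$ to the Lyndon--Hochschild--Serre spectral sequence for the central extension $0\to\mu_r\to E_i\to G_i\to 0$ with coefficients in $\mathbbm k^{\times}$, and the stalk of $\gamma$ is identified with the transgression
$$d_2^{\mathrm{HS}}:\Hom(\mu_r,\mathbbm k^{\times})\to\HH^2(G_i,\mathbbm k^{\times}),\qquad \epsilon\mapsto\epsilon_*[E_i],$$
where $[E_i]\in\HH^2(G_i,\mu_r)$ is the class of $\mathcal G|_{BG_i}$. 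This formula is the standard description of the LHS transgression; it can be re-derived through the same argument invoked in the proof of Proposition \ref{root injectivity} via \cite[Theorem 1.2]{Lop23}.

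Third, the key observation is that every $\epsilon:\mu_r\to\mathbbm k^{\times}$ is an integer multiple of the canonical inclusion $\iota:\mu_r\hookrightarrow\mathbb G_m$, so $d_2^{\mathrm{HS}}$ vanishes if and only if $\iota_*[E_i]=0$, and by the Kummer sequence this is precisely the condition that $\mathcal G|_{BG_i}$ is a root gerbe. Collecting this equivalence over all $i$ and applying Corollary \ref{root fiber} yields the desired biconditional. The main piece of bookkeeping is the identification of the Grothendieck differential with the Hochschild--Serre transgression after étale localization; this is the step I would carry out most carefully, but once established the rest follows immediately.
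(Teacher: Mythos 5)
Your proof is correct, but it takes a more computational route than the paper's. The paper's argument is three citations: by Lemmas \ref{R^2k} and \ref{R^kf} both sheaves are skyscrapers at the $p_i$, so injectivity of $\mathbf R^2f_*\mathbb G_m\rightarrow\mathbf R^2g_*\mathbb G_m$ is literally the statement that $\HH^2(G_i,\mathbbm k^{\times})\rightarrow\HH^2(E_i,\mathbbm k^{\times})$ is injective for all $i$; Proposition \ref{root injectivity}, applied to each fiber $BE_i\rightarrow BG_i$, converts that into the fiberwise root-gerbe condition; and Corollary \ref{root fiber} finishes. You instead use exactness of the five-term sequence at $\mathbf R^2f_*\mathbb G_m$ to trade injectivity of $\delta$ for vanishing of $\gamma$, and then identify the stalk of $\gamma$ with the Lyndon--Hochschild--Serre transgression $\epsilon\mapsto\epsilon_*[E_i]$. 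These are two readings of the same exact sequence ($\ker\delta=\im\gamma$), and your transgression formula is exactly what the proof of Proposition \ref{root injectivity} extracts from \cite[Theorem 1.2]{Lop23} when run on the fiber, so the underlying mechanism is the same; what your version buys is an explicit description of the obstruction class, at the cost of having to justify the stalkwise identification of the Grothendieck differential with the group-cohomological transgression (and, implicitly, that $\mathcal G$ is \'etale-locally $[\Spec R/E_i]$ with $E_i$ acting through $G_i$, which follows from $\HH^2([\Spec R/G_i],\mu_r)=\HH^2(G_i,\mu_r)$ classifying both gerbes and central extensions). The paper's version avoids that bookkeeping entirely by reusing Proposition \ref{root injectivity} as a black box. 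Both are valid; yours is essentially an inlined, self-contained rederivation of the fiberwise criterion.
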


\begin{proof}
By Corollary \ref{root fiber} $\mathcal G$ is a root gerbe if and only if it is a root gerbe
on each fiber. However, by Proposition \ref{root injectivity} this happens if and only if
$$
\HH^2(BG_i,\mathbb G_m)=\HH^2(G_i,\mathbbm k^{\times})\hookrightarrow
\HH^2(E_i,\mathbbm k^{\times})=\HH^2(BE_i,\mathbb G_m)
$$
for all $i$.
By the explicit descriptions of $\mathbf R^2f_*\mathbb G_m$ and
$\mathbf R^2g_*\mathbb G_m$ given in Lemmas \ref{R^2k} and \ref{R^kf}, this is
equivalent to $\mathbf R^2f_*\mathbb G_m\hookrightarrow
\mathbf R^2g_*\mathbb G_m$.
\end{proof}

\begin{lemma}\label{R^3 injection}
The natural morphism $\mathbf R^3f_*\mathbb G_m\rightarrow\mathbf R^3g_*\mathbb G_m$
is injective
if and only if $\HH^3(G_i,\mathbbm k^{\times})\xrightarrow{\inf}\HH^3(E_i,\mathbbm k^{\times})$
is injective for all $i$.
\end{lemma}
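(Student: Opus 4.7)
The plan is to reduce the sheaf-level injectivity statement to a pointwise injectivity statement at the finitely many stacky points $p_1,\dots,p_n$, and then to identify the resulting stalk maps with the inflation maps arising from the central extensions $0\to\mu_r\to E_i\to G_i\to 0$.

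First I would invoke Corollary \ref{Meier stalk} to identify stalks: at a geometric point $x\to C$,
$$
(\mathbf R^3f_*\mathbb G_m)_x = \HH^3(G_x,\mathbbm k^{\times})\quad\text{and}\quad(\mathbf R^3g_*\mathbb G_m)_x = \HH^3(E_x,\mathbbm k^{\times}),
$$
where $G_x$ and $E_x$ denote the stabilizers in $\mathcal C$ and $\mathcal G$ of the preimage of $x$. Away from the $p_i$ the group $G_x$ is trivial and $E_x=\mu_r$, and the latter has vanishing $\HH^3$ by Proposition \ref{cyclic cohomology}. Hence both sheaves are supported on $\{p_1,\dots,p_n\}$, and injectivity of the sheaf map is equivalent to injectivity of each stalk map $\HH^3(G_i,\mathbbm k^{\times})\to\HH^3(E_i,\mathbbm k^{\times})$.

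Next I would identify this stalk map with inflation. The map of sheaves is the edge morphism of the Grothendieck spectral sequence for $g = f\circ\pi$, combined with $\pi_*\mathbb G_m = \mathbb G_m$. \'{E}tale locally at $p_i$ we can write $\mathcal C = [\Spec R/G_i]$ for a strictly henselian local ring $R$ with residue field $\mathbbm k$; since $\mathcal G\to\mathcal C$ is a $\mu_r$-gerbe whose pullback to $\Spec R$ is trivial, $\mathcal G$ presents in the same neighborhood as $[\Spec R/E_i]$, with $E_i$ acting on $\Spec R$ through its quotient $E_i\to G_i$, and with $\pi$ induced by that quotient. The descent spectral sequences for the atlases $\Spec R\to [\Spec R/G_i]$ and $\Spec R\to[\Spec R/E_i]$ (together with the vanishing of higher cohomology of $\mathbb G_m$ on $\Spec R$) identify the relevant cohomology with $\HH^n(G_i,R^{\times})$ and $\HH^n(E_i,R^{\times})$, and the induced map is the functoriality map along $E_i\to G_i$. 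The coefficient replacement $R^{\times}\leadsto\mathbbm k^{\times}$ proceeds exactly as in the proof of Corollary \ref{Meier stalk}, and since $E_i\to G_i$ acts trivially on $\mathbbm k^{\times}$, the resulting map is precisely $\inf$.

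The main obstacle is this last identification of the natural stalk map with inflation, which requires chasing the edge morphism of the Grothendieck spectral sequence through the local presentations and checking that no twist enters the coefficient system. Once that naturality is in hand, both directions of the iff follow at once from the support reduction: injectivity of the sheaf map is equivalent to injectivity at each $p_i$, and that is precisely the injectivity of $\inf:\HH^3(G_i,\mathbbm k^{\times})\to\HH^3(E_i,\mathbbm k^{\times})$.
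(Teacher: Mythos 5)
Your proof follows essentially the same route as the paper's: reduce to stalks using that $\mathbf R^3f_*\mathbb G_m$ is supported at the $p_i$, identify the stalks via Corollary \ref{Meier stalk}, and recognize the stalk map as inflation along $E_i\twoheadrightarrow G_i$ (a point the paper leaves implicit and you usefully spell out via the local presentations $[\Spec R/G_i]$ and $[\Spec R/E_i]$). One correction: $\HH^3(\mu_r,\mathbbm k^{\times})=\mathbb Z/r\neq 0$ by Proposition \ref{cyclic cohomology} --- it is the \emph{even} cohomology of $\mu_r$ that vanishes --- so $\mathbf R^3g_*\mathbb G_m$ is not supported only at the $p_i$. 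This slip is harmless, since a map of sheaves is injective if and only if it is injective on all geometric stalks, and only the support of the source sheaf $\mathbf R^3f_*\mathbb G_m$ is needed to reduce the check to the points $p_i$.
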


\begin{proof}
It is sufficient to check on geometric fibers. The sheaf $\mathbf R^3f_*\mathbb G_m$ vanishes
away from $p_1,\dots,p_n$ by Lemma \ref{R^kf}, and so it is sufficient to check
just at the $p_i$. By Corollary \ref{Meier stalk}, at each $p_i$ we have
$(\mathbf R^3f_*\mathbb G_m)_{p_i}=\HH^3(G_i,\mathbbm k^{\times})$
and $(\mathbf R^3g_*\mathbb G_m)_{p_i}=\HH^3(E_i,\mathbbm k^{\times})$. Therefore
$\mathbf R^3f_*\mathbb G_m\rightarrow\mathbf R^3g_*\mathbb G_m$ is injective
if and only if it is injective on all stalks if and only if the inflation map is injective.
\end{proof}

\subsection{Leray for $g:\mathcal G\rightarrow C$}
The terms of the Leray sequence are
$$
E_2^{p,q}=\HH^p(C,\mathbf R^qg_*\mathbb G_m).
$$

\begin{lemma}\label{E_2^{p,2}}
Whenever $p\geq 1$ we have $E_2^{p,2}=\HH^p(C,\mathbf R^2g_*\mathbb G_m)=0$.
\end{lemma}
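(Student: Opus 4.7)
The plan is to simply invoke Lemma \ref{R^2k} and then use the fact that skyscraper sheaves on $C$ (supported at closed geometric points over the algebraically closed field $\mathbbm k$) have vanishing higher \'etale cohomology.

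First I would apply Lemma \ref{R^2k} with $k=1$ to rewrite
$$
\mathbf R^2g_*\mathbb G_m=\bigoplus_{i=1}^n p_{i_*}\HH^2(E_i,\mathbbm k^{\times}),
$$
so that it suffices, by additivity of cohomology, to show that for each $i$ the sheaf $p_{i_*}\HH^2(E_i,\mathbbm k^{\times})$ has vanishing $\HH^p(C,-)$ for $p\geq 1$. Since $p_i:\Spec\mathbbm k\hookrightarrow C$ is a closed immersion into the algebraically closed point, $p_{i_*}$ is exact on abelian sheaves and has no higher derived functors, so the Leray spectral sequence for $p_i$ degenerates and gives
$$
\HH^p(C,p_{i_*}\HH^2(E_i,\mathbbm k^{\times}))=\HH^p(\Spec\mathbbm k,\HH^2(E_i,\mathbbm k^{\times})).
$$
The right-hand side vanishes for $p\geq 1$, since \'etale cohomology of the spectrum of an algebraically closed field is just the global-sections functor. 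This yields the desired vanishing $E_2^{p,2}=0$ for $p\geq 1$.

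There is really no obstacle to this argument: the nontrivial content has already been absorbed into Lemma \ref{R^2k}, which identifies $\mathbf R^2g_*\mathbb G_m$ as a finite sum of skyscrapers. The only thing one must be mildly careful about is that we are genuinely working over an algebraically closed field, so that residue fields at the geometric points $p_i$ have trivial \'etale cohomology; this is already built into our global standing assumptions in Section \ref{conventions}.
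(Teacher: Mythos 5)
Your proof is correct and follows the same route as the paper: invoke Lemma \ref{R^2k} to identify $\mathbf R^2g_*\mathbb G_m$ as a finite sum of skyscraper sheaves and conclude that its higher cohomology vanishes. The paper simply states this last step without spelling out the exactness of $p_{i_*}$ and the triviality of \'etale cohomology over $\Spec\mathbbm k$, which you make explicit.
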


\begin{proof}
By Proposition \ref{R^2k}, $\mathbf R^2g_*\mathbb G_m$ is supported on finitely many points
(the $p_i$'s), and so its higher cohomology vanishes.
\end{proof}

\begin{lemma}\label{E_2^{p,1}}
Whenever $p\geq 2$, we have $E_2^{p,1}=\HH^p(C,\mathbb Z/r)$, and this vanishes for
$p\geq 3$.
\end{lemma}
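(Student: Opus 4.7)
The plan is to combine Lemma \ref{higher cohomology}, Proposition \ref{R^1g_*}, and the cohomological dimension of a one-dimensional scheme over an algebraically closed field.

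First, I would apply Lemma \ref{higher cohomology} to reduce the computation: for $p \geq 2$,
$$
\HH^p(C, \mathbf R^1 g_* \mathbb G_m) \;\cong\; \HH^p(C, f_*(\mathbf R^1 \pi_* \mathbb G_m)).
$$
Then, by Proposition \ref{R^1g_*}, the inner sheaf $\mathbf R^1 \pi_* \mathbb G_m$ is the constant sheaf $\mathbb Z/r = \Hom(\mu_r, \mathbb G_m)$ on $\mathcal C$. Next I would verify stalkwise that $f_* \mathbb Z/r$ is itself the constant sheaf $\mathbb Z/r$ on $C$: at a geometric point $x$ of $C$ whose preimage in $\mathcal C$ has stabilizer $G_x$, the stalk is $\HH^0(G_x, \mathbb Z/r)$; since $G_x$ acts trivially on the discrete abelian group $\mathbb Z/r$, this stalk is just $\mathbb Z/r$ (and generically $G_x$ is trivial, giving the same conclusion). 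Chaining these identifications yields $E_2^{p,1} = \HH^p(C, \mathbb Z/r)$ for $p \geq 2$.

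For the vanishing statement, I would appeal to the fact that $C$ is a separated one-dimensional scheme of finite type over the algebraically closed field $\mathbbm k$, and that $r$ is coprime to $\fieldchar \mathbbm k$ (since $\mu_r$ is required to be an étale sheaf for the $\mu_r$-gerbe $\mathcal G \to \mathcal C$ to be a Deligne-Mumford stack in the tame setting). By the standard bound on étale cohomological dimension of a scheme of finite type over an algebraically closed field with torsion coefficients of order coprime to the characteristic, $\HH^p(C, \mathbb Z/r) = 0$ for all $p \geq 3$.

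The main potential obstacle is essentially notational: spelling out that the Meier-type stalk computation recovers the constant sheaf in this case, since the earlier lemmas (Proposition \ref{Meier structure}, Corollary \ref{Meier stalk}) were phrased for $\mathbb G_m$ rather than for an arbitrary constant étale coefficient system. This is immediate from the triviality of the $G_x$-action on any discrete group, but it is worth remarking explicitly to keep the identification of $E_2^{p,1}$ with $\HH^p(C, \mathbb Z/r)$ rigorous.
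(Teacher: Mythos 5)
Your proposal is correct, and the first half — the identification $E_2^{p,1}=\HH^p(C,\mathbb Z/r)$ via Lemma \ref{higher cohomology} and Proposition \ref{R^1g_*} — is exactly the paper's argument; you are in fact slightly more careful than the paper in justifying the step $\HH^p(C,f_*(\mathbb Z/r))=\HH^p(C,\mathbb Z/r)$, which the paper asserts without comment and which does deserve the stalkwise remark you make (the stalk is the $G_x$-invariants of $\mathbb Z/r$ under the trivial action, hence $\mathbb Z/r$). Where you diverge is the vanishing for $p\geq 3$: you invoke the general bound on the \'{e}tale cohomological dimension of a finite-type scheme over an algebraically closed field with coefficients of order invertible in $\mathbbm k$ (noting correctly that tameness of $\mathcal G$ forces $\gcd(r,\fieldchar\mathbbm k)=1$), whereas the paper sandwiches $\HH^p(C,\mathbb Z/r)$ in the Kummer sequence between $\HH^{p-1}(C,\mathbb G_m)$ and $\HH^p(C,\mathbb G_m)$, both of which vanish for $p\geq 3$ by Tsen's Theorem (Theorem \ref{Tsen's Theorem}), which the paper has already stated for possibly singular curves. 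Both routes are standard and valid; the paper's has the advantage of staying entirely inside results already quoted in the text, while yours imports one external fact but avoids the (harmless, since $r$ is invertible) identification of $\mu_r$ with $\mathbb Z/r$ implicit in using the Kummer sequence to control $\mathbb Z/r$-coefficients.
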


\begin{proof}
By Lemma \ref{higher cohomology} and Proposition \ref{R^1g_*}, we have
$$
\HH^p(C,\mathbf R^1g_*\mathbb G_m)=\HH^p(C,f_*\mathbf R^1\pi_*\mathbb G_m)=
\HH^p(C,f_*(\mathbb Z/r))=\HH^p(C,\mathbb Z/r)
$$
for all $p\geq 2$. We observe here that by Lemma \ref{higher cohomology} and Note \ref{R^2 injectivity note}
that in fact this holds for all $p\geq1$ when $\mathcal G\rightarrow\mathcal C$ is a root gerbe.

By Theorem \ref{Tsen's Theorem}
the Kummer sequence for $C$ at $p\geq 3$ reads
$$
0=\HH^{p-1}(C,\mathbb G_m)\rightarrow\HH^p(C,\mathbb Z/r)\rightarrow
\HH^p(C,\mathbb G_m)=0,
$$
which shows the vanishing for $p\geq 3$.
\end{proof}

Putting all of this together, we get that the
$E_2$ page of our spectral
sequence (with only the relevant terms filled in) looks like
$$
\begin{tikzcd}
\HH^0(C,\mathbf R^3g_*\mathbb G_m) &  E_2^{1,3} &
E_2^{2,3} & E_2^{3,3} & E_2^{4,3}\\
E_2^{0,2} & 0 & 0 & 0 & 0\\
E_2^{0,1} & E_2^{1,1} & \HH^2(C,\mathbb Z/r) & 0 & 0\\
E_2^{0,0} & E_2^{1,0} & 0 & 0 & 0
\end{tikzcd}
$$

\begin{proposition}\label{H^3 injectivity}
The pullback map
$$
\HH^3(\mathcal C,\mathbb G_m)\xrightarrow{\pi^*}\HH^3(\mathcal G,\mathbb G_m)
$$
is injective if and only if
the inflation map
$$
\HH^3(G_i,\mathbbm k^{\times})\xrightarrow{\inf}\HH^3(E_i,\mathbbm k^{\times})
$$
is injective for all $i$.
\end{proposition}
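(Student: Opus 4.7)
The plan is to apply the Leray spectral sequence
\[
E_2^{p,q}=\HH^p(\mathcal C,\mathbf R^q\pi_*\mathbb G_m)\Rightarrow\HH^{p+q}(\mathcal G,\mathbb G_m)
\]
and reduce the injectivity of $\pi^*$ on $\HH^3$ to the vanishing of a single differential. Since $\pi_*\mathbb G_m=\mathbb G_m$, $\mathbf R^1\pi_*\mathbb G_m=\mathbb Z/r$ (Proposition~\ref{R^1g_*}), and $\mathbf R^2\pi_*\mathbb G_m=0$ because $\pi$ is \'{e}tale-locally a trivial $\mu_r$-gerbe and $\HH^2(\mu_r,\mathbbm k^{\times})=0$ (Proposition \ref{cyclic cohomology}), the only potentially nonzero incoming differential into $E_r^{3,0}=\HH^3(\mathcal C,\mathbb G_m)$ is $d_2:\HH^1(\mathcal C,\mathbb Z/r)\to\HH^3(\mathcal C,\mathbb G_m)$. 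Hence $\pi^*$ is injective precisely when this $d_2$ vanishes.

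Next I would localize. Using Proposition~\ref{C G_m cohomology} to write $\HH^3(\mathcal C,\mathbb G_m)=\bigoplus_i\HH^3(G_i,\mathbbm k^{\times})$ via restriction along $BG_i\hookrightarrow\mathcal C$, naturality of the Leray spectral sequence identifies the $i$-th component of $d_2(\alpha)$ with $d_2^{\pi_i}(\alpha|_{BG_i})$, where $d_2^{\pi_i}$ is the corresponding differential for the fiber gerbe $\pi_i:BE_i\to BG_i$. The latter agrees with the $d_2$ in the Lyndon--Hochschild--Serre spectral sequence for $1\to\mu_r\to E_i\to G_i\to 1$, and the same vanishing $\HH^2(\mu_r,\mathbbm k^{\times})=0$ kills the remaining differentials into $E_r^{3,0}$ in that sequence, so $\im(d_2^{\pi_i})=\ker(\inf_i)$. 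The forward direction is then immediate: if every $\inf_i$ is injective, each $d_2^{\pi_i}$ is zero, so $d_2$ vanishes and $\pi^*$ is injective.

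For the reverse direction, I would compare with Lemma~\ref{R^3 injection} via the morphism of Leray spectral sequences for $f$ and $g$ induced by $\pi$. Under Proposition~\ref{C G_m cohomology}, the composite
\[
\HH^3(\mathcal C,\mathbb G_m)\xrightarrow{\pi^*}\HH^3(\mathcal G,\mathbb G_m)\twoheadrightarrow E_\infty^{0,3}(g)\hookrightarrow\HH^0(C,\mathbf R^3 g_*\mathbb G_m)
\]
is induced by the sheaf morphism $\mathbf R^3 f_*\mathbb G_m\to\mathbf R^3 g_*\mathbb G_m$ of Lemma~\ref{R^3 injection}, which is injective iff every $\inf_i$ is. I expect the main obstacle here: the local argument above gives only $\ker(\pi^*)\subseteq\bigoplus_i\ker(\inf_i)$, and upgrading to equality requires showing that the snake-lemma connecting map into $E_\infty^{2,1}(g)\subseteq\HH^2(C,\mathbb Z/r)$ vanishes on $\ker$ of the map on $E_\infty^{0,3}$-terms. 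Equivalently, given a nonzero $\gamma\in\ker(\inf_{i_0})$, one must produce a global class $\alpha\in\HH^1(\mathcal C,\mathbb Z/r)$ whose image under $d_2$ has nonzero $i_0$-th component; this reduces to a sufficient surjectivity statement for $\HH^1(\mathcal C,\mathbb Z/r)\to\HH^1(G_{i_0},\mathbb Z/r)$, which I would extract from the Leray spectral sequence of $f$ applied to the constant sheaf $\mathbb Z/r$.
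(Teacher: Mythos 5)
Your opening reduction is correct: since $\mathbf R^2\pi_*\mathbb G_m=0$, the only differential hitting $E_r^{3,0}=\HH^3(\mathcal C,\mathbb G_m)$ in the Leray sequence for $\pi$ is $d_2\colon\HH^1(\mathcal C,\mathbb Z/r)\to\HH^3(\mathcal C,\mathbb G_m)$, so $\ker(\pi^*)=\im(d_2)$ (this is the tail of the seven-term sequence of Proposition \ref{G to C low degree proposition}). Your localization of $d_2$ to the residual gerbes, together with $\im(d_2^{\pi_i})=\ker(\inf_i)$ from the Lyndon--Hochschild--Serre sequence, then gives the implication ``all $\inf_i$ injective $\Rightarrow\pi^*$ injective'' cleanly. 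This is a genuinely different route from the paper's, which works instead with the Leray spectral sequence for $g\colon\mathcal G\to C$ over the coarse space: there $\HH^3(\mathcal G,\mathbb G_m)$ surjects onto $E_\infty^{0,3}=\HH^0(C,\mathbf R^3g_*\mathbb G_m)$, the source $\HH^3(\mathcal C,\mathbb G_m)=\HH^0(C,\mathbf R^3f_*\mathbb G_m)$ is concentrated entirely in its own $E_\infty^{0,3}$-piece by Proposition \ref{C G_m cohomology}, and $\pi^*$ is identified with the global sections of the skyscraper sheaf map $\mathbf R^3f_*\mathbb G_m\to\mathbf R^3g_*\mathbb G_m$ of Lemma \ref{R^3 injection}. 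The paper therefore never needs to produce global classes in $\HH^1(\mathcal C,\mathbb Z/r)$; both directions are read off from one identification of maps.

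The gap is exactly where you flag it, and the proposed fix does not work: the surjectivity of $\HH^1(\mathcal C,\mathbb Z/r)\to\HH^1(G_{i_0},\mathbb Z/r)$ is false in general. The Leray sequence for $f$ and $\mathbb Z/r$ that you invoke actually exhibits the obstruction rather than removing it: it reads
$$
\HH^1(\mathcal C,\mathbb Z/r)\rightarrow\bigoplus_{i=1}^n\HH^1(G_i,\mathbb Z/r)\rightarrow\HH^2(C,\mathbb Z/r),
$$
and $\HH^2(C,\mathbb Z/r)$ is typically nonzero (it is $\mathbb Z/r$ for $C$ proper and irreducible). Concretely, for $\mathcal C=\mathbb P^1(\sqrt[m]{p})$ the Kummer sequence gives $\HH^1(\mathcal C,\mathbb Z/r)=0$ because $\Pic(\mathcal C)\cong\mathbb Z$ is torsion-free and $\mathbbm k^\times$ is divisible, while $\HH^1(\mathbb Z/m,\mathbb Z/r)\neq0$ whenever $\gcd(m,r)>1$; so the restriction can even be the zero map onto a nonzero target. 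Consequently a class $\beta\in\HH^1(G_{i_0},\mathbb Z/r)$ with $d_2^{\pi_{i_0}}(\beta)=\gamma\neq0$ need not lift globally, and your argument only yields $\ker(\pi^*)\subseteq\bigoplus_i\ker(\inf_i)$, i.e.\ the ``if'' direction. To close the ``only if'' direction you should abandon the lifting strategy and instead follow the paper: pass to the spectral sequence over $C$, where the comparison of $\pi^*$ with the map of skyscraper sheaves $\mathbf R^3f_*\mathbb G_m\to\mathbf R^3g_*\mathbb G_m$ (checked stalkwise at the $p_i$, Lemma \ref{R^3 injection}) converts injectivity of $\pi^*$ directly into injectivity of each $\inf_i$.
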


\begin{proof}
The associated filtration on $\HH^3(\mathcal G,\mathbb G_m)$ is
$$
0\subseteq F^3H^3\subseteq F^2H^3\subseteq F^1H^3\subseteq F^0H^3=\HH^3(\mathcal G,\mathbb G_m).
$$
The last term, $F^0H^3$, surjects onto $E_{\infty}^{0,3}$ with kernel $F^1H^3$, so we will compute
$E_{\infty}^{0,3}$. The relevant differentials are
$$
d_2^{0,3}:E_2^{0,3}\rightarrow E_2^{2,2},
$$
$$
d_3^{0,3}:E_3^{0,3}\rightarrow E_3^{3,1},
$$
$$
d_4^{0,3}:E_4^{0,3}\rightarrow E_4^{4,0}.
$$
Looking at the spectral sequence above this proposition, we see that the only non-zero term
appearing in our above differentials is $E_2^{0,3}$, and hence
$$
\HH^3(\mathcal G,\mathbb G_m)\twoheadrightarrow E_{\infty}^{0,3}=E_2^{0,3}=
\HH^0(C,\mathbf R^3g_*\mathbb G_m).
$$

Using Proposition \ref{C G_m cohomology} to rewrite $\HH^3(\mathcal C,\mathbb G_m)=
\HH^0(C,\mathbf R^3f_*\mathbb G_m)$, we see that the
map $\HH^3(\mathcal C,\mathbb G_m)\rightarrow\HH^3(\mathcal G,\mathbb G_m)$ is induced 
by the map $\mathbf R^3f_*\mathbb G_m\rightarrow\mathbf R^3g_*\mathbb G_m$. By Lemma
\ref{R^3 injection}, injectivity of this map is equivalent to injectivity of
the inflation map for each $i$.

\end{proof}

\subsection{Leray for $\pi:\mathcal G\rightarrow\mathcal C$}\label{Leray for pi}
We now revisit the Leray spectral sequence for $\pi$ and $\mathbb G_m$. This will lead
to an exact sequence containing $\HH^2(\mathcal G,\mathbb G_m)$, and we give multiple
conditions which simplify its behavior.

\begin{proposition}\label{G to C low degree proposition}
The sequence of low-degree terms can be extended to
\begin{multline}\label{G to C low degree}
0\rightarrow\HH^1(\mathcal C,\mathbb G_m)\rightarrow\HH^1(\mathcal G,\mathbb G_m)\rightarrow
\HH^0(\mathcal C,\mathbb Z/r)\rightarrow\HH^2(\mathcal C,\mathbb G_m)\rightarrow\\
\HH^2(\mathcal G,\mathbb G_m)\rightarrow
\HH^1(\mathcal C,\mathbb Z/r)\rightarrow\HH^3(\mathcal C,\mathbb G_m)\rightarrow\
\HH^3(\mathcal G,\mathbb G_m).
\end{multline}
\end{proposition}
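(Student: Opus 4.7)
The plan is to continue the Leray spectral sequence analysis
$$
E_2^{p,q}=\HH^p(\mathcal C,\mathbf R^q\pi_*\mathbb G_m)\implies\HH^{p+q}(\mathcal G,\mathbb G_m)
$$
that produced (\ref{G to C low degree initial}), by showing that only two rows of the $E_2$ page are non-zero; this collapses the spectral sequence into the claimed single long exact sequence (\ref{G to C low degree}). The key input is the vanishing $\mathbf R^2\pi_*\mathbb G_m=0$, which removes the intermediate kernel in (\ref{G to C low degree initial}) and lets us plug $\HH^2(\mathcal G,\mathbb G_m)$ itself into the sequence.

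I would first establish $\mathbf R^2\pi_*\mathbb G_m=0$. Since $\pi$ is a $\mu_r$-gerbe, \'{e}tale-locally on $\mathcal C$ it has the form $B\mu_r\times U\to U$, so by descent it suffices to compute $\mathbf R^q\pi_*\mathbb G_m$ after pulling back to a scheme atlas $V\to\mathcal C$ that trivializes the gerbe. There the stalk at a geometric point $v$ of $V$ is $\HH^q(\mu_r,\mathcal O_{V,v}^{sh,\times})$, by the same local-to-global argument underlying Proposition~\ref{Meier structure} applied to the split projection $B\mu_r\times V\to V$. For $q=2$ this equals $\mathcal O_{V,v}^{sh,\times}/r\cdot\mathcal O_{V,v}^{sh,\times}$, which vanishes because units of a strictly Henselian local ring over $\mathbbm k$ are divisible (and $r$ is coprime to $\fieldchar\mathbbm k$ by tameness). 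Combined with the identification $\mathbf R^1\pi_*\mathbb G_m=\mathbb Z/r$ from Proposition~\ref{R^1g_*}, the $E_2$ page has only two non-zero rows: $E_2^{p,0}=\HH^p(\mathcal C,\mathbb G_m)$ and $E_2^{p,1}=\HH^p(\mathcal C,\mathbb Z/r)$.

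With only rows $q=0,1$ non-zero, the only possibly non-trivial differentials are the maps $d_2^{p,1}:\HH^p(\mathcal C,\mathbb Z/r)\to\HH^{p+2}(\mathcal C,\mathbb G_m)$ (all higher $d_r$ land in row $2-r<0$, and all outgoing $d_r$ from row $0$ land in negative rows). The filtration short exact sequences $0\to E_\infty^{p,0}\to\HH^p(\mathcal G,\mathbb G_m)\to E_\infty^{p-1,1}\to 0$, together with the identifications $E_\infty^{p,0}=\mathrm{coker}(d_2^{p-2,1})$ and $E_\infty^{p,1}=\ker(d_2^{p,1})$, splice for $p=1,2,3$ into the claimed long exact sequence; the leading injection $\HH^1(\mathcal C,\mathbb G_m)\hookrightarrow\HH^1(\mathcal G,\mathbb G_m)$ is automatic since $E_2^{-1,1}=0$ forces $E_\infty^{1,0}=E_2^{1,0}$.

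The main obstacle is justifying $\mathbf R^2\pi_*\mathbb G_m=0$ at the stacky points of $\mathcal C$: Proposition~\ref{Meier structure} is stated for coarsenings rather than gerbes, so the cited stalk formula does not apply to $\pi$ verbatim. I would bypass this by working on a scheme atlas that splits the gerbe as above, but as a safety check one can also extract the same answer from the Grothendieck spectral sequence for $g=f\circ\pi$ together with the already-computed $\mathbf R^qf_*\mathbb G_m$ (Lemma~\ref{R^kf}) and $\mathbf R^qg_*\mathbb G_m$ (Lemma~\ref{R^2k}).
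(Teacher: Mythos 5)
Your overall route is the paper's: take the standard sequence of low-degree terms for the Leray spectral sequence of $\pi$, show $\mathbf R^2\pi_*\mathbb G_m=0$ so that the kernel term becomes all of $\HH^2(\mathcal G,\mathbb G_m)$, and use the vanishing of $E_2^{0,2}$ to extend one term to $\HH^3(\mathcal G,\mathbb G_m)$. Your computation of $\mathbf R^2\pi_*\mathbb G_m$ is fine (and your worry about Proposition \ref{Meier structure} is unnecessary: $\pi$ \emph{is} a coarsening in the sense of the paper's definition, since base change along any algebraic space $Z\rightarrow\mathcal C$ yields a gerbe over $Z$, whose coarse moduli space is $Z$ itself; this is exactly how the paper invokes Corollary \ref{Meier stalk} for $\pi$).

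However, there is a genuine error in the middle of your argument: the $E_2$ page does \emph{not} have only two non-zero rows. By the same stalk computation, $(\mathbf R^3\pi_*\mathbb G_m)_x=\HH^3(\mu_r,\mathbbm k^{\times})=\mathbb Z/r$ by Proposition \ref{cyclic cohomology}, so row $q=3$ (and every odd row $q\geq 1$) is non-zero; only the even rows $q\geq 2$ vanish. Consequently the spectral sequence does not collapse into a single long exact sequence, the differentials $d_2^{p,3}$ and $d_4^{0,3}$ need not vanish, and your filtration short exact sequence $0\rightarrow E_\infty^{p,0}\rightarrow\HH^p(\mathcal G,\mathbb G_m)\rightarrow E_\infty^{p-1,1}\rightarrow 0$ is false for $p=3$, where the graded piece $E_\infty^{0,3}$ also appears (indeed the paper exploits precisely this piece in Proposition \ref{H^3 injectivity}). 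The stated eight-term sequence is still correct, because exactness at $\HH^3(\mathcal C,\mathbb G_m)=E_2^{3,0}$ only requires that the sole incoming differential beyond $d_2^{1,1}$, namely $d_3^{0,2}$, vanish --- which follows from $E_2^{0,2}=0$, exactly as in Lemma \ref{low-degree}. You should replace the ``two-row collapse'' justification with that argument; as written, the step you use to produce the $p=3$ portion of the sequence does not hold.
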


\begin{proof}
This is just the standard exact sequence of low-degree terms (Sequence \ref{G to C low degree initial}),
where we add the following observations.

First, we have $\mathbf R^2\pi_*\mathbb G_m=0$ since for all geometric points $x$
of $\mathcal C$ we have $(\mathbf R^2\pi_*\mathbb G_m)_x=\HH^2(\mu_r,\mathbbm k^{\times})=0$
by Corollary \ref{Meier stalk} and Proposition \ref{cyclic cohomology}.
Therefore
$$
\ker\left[\HH^2(\mathcal G,\mathbb G_m)\rightarrow\HH^0(\mathcal C,\mathbf R^2\pi_*
\mathbb G_m)\right]=\HH^2(\mathcal G,\mathbb G_m).
$$
Additionally, by Lemma \ref{low-degree}
we may extend the sequence one additional term to $\HH^3(\mathcal G,\mathbb G_m)$,
since $E_2^{0,2}=\HH^0(\mathcal C,\mathbf R^2\pi_*\mathbb G_m)=0$.
\end{proof}

We would like to extract a short exact sequence containing $\HH^2(\mathcal G,\mathbb G_m)$
from this.
We've shown in Lemma \ref{root injectivity} and Corollary \ref{root fiber}
that $\HH^2(\mathcal C,\mathbb G_m)\rightarrow
\HH^2(\mathcal G,\mathbb G_m)$ is injective if and only if $\mathcal G$ is a root gerbe over $\mathcal C$
if and only if $BE_i\rightarrow BG_i$ is a root gerbe on every fiber.
By Proposition \ref{H^3 injectivity}, we have surjectivity of $\HH^2(\mathcal G,\mathbb G_m)
\rightarrow\HH^1(\mathcal C,\mathbb Z/r)$ if and only if $\HH^3(\mathcal C,\mathbb G_m)\rightarrow
\HH^3(\mathcal G,\mathbb G_m)$ is injective
if and only if the inflation map
$\HH^3(G_i,\mathbbm k^{\times})\rightarrow\HH^3(E_i,\mathbbm k^{\times})$ is injective
for all $i$.
This, combined with Proposition \ref{C G_m cohomology} to rewrite
$$
\HH^2(\mathcal C,\mathbb G_m)=\bigoplus_{i=1}^n\HH^2(G_i,\mathbbm k^{\times}),
$$
yields the first three statements of the following.

\begin{theorem}\label{main Brauer statement}
Suppose that $\mathcal C$ is a tame separated one-dimensional Deligne-Mumford stack
with trivial generic stabilizer
of finite type over an
algebraically closed field $\mathbbm k$. Let
$f:\mathcal C\rightarrow C$ be the coarse moduli space of $\mathcal C$.
Suppose that $p_1,\dots,p_n$ are the geometric points
in $C$ whose preimage in $\mathcal C$ have non-trivial stabilizers $G_1,\dots,G_n$.
Let $\pi:\mathcal G\rightarrow\mathcal C$ be a
$\mu_r$-gerbe, and denote the stabilizers in $\mathcal G$ over $p_1,\dots,p_n$ as $E_1,\dots,E_n$.
Let $G_i$ and $E_i$ act trivially on $\mathbbm k^{\times}$ for each $i$.
Then the following sequence is exact
$$
\bigoplus_{i=1}^n\HH^2(G_i,\mathbbm k^{\times})\rightarrow
\HH^2(\mathcal G,\mathbb G_m)\rightarrow
\HH^1(\mathcal C,\mathbb Z/r).
$$
Moreover,
\begin{enumerate}
\item the left map is injective if and only if $\mathcal G$ is a root gerbe over $\mathcal C$,
\item $\mathcal G$ is a root gerbe over $\mathcal C$ if and only if it is a root gerbe on each fiber
$BE_i\rightarrow BG_i$,
\item the right map is surjective if and only if the inflation map
$$
\inf:\HH^3(G_i,\mathbbm k^{\times})\rightarrow\HH^3(E_i,\mathbbm k^{\times})
$$
is injective for all $i=1,\dots,n$,
\item if $\gcd(r,\left|G_i\right|)=1$ for all $i=1,\dots,n$, then:
\begin{enumerate}
\item $\pi:\mathcal G\rightarrow\mathcal C$
is automatically a root gerbe,
\item the inflation map is automatically injective, and
\item we have the following splitting
$$
\HH^2(\mathcal G,\mathbb G_m)=\HH^1(C,\mathbb Z/r)\oplus\bigoplus_{i=1}^n\HH^2(G_i,\mathbbm k^{\times}),
$$
\end{enumerate}
\item more generally,
in the situation that the sequence is right-exact, to show that the sequence splits it suffices
to show that on each fiber $BE_i\rightarrow BG_i$ there is a section
$$
\begin{tikzcd}
\HH^2(G_i,\mathbbm k^{\times})\arrow[r, "\inf"] & \HH^2(E_i,\mathbbm k^{\times})\arrow[l, dashed,
bend right].
\end{tikzcd}
$$
\end{enumerate}
\end{theorem}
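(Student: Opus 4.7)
I would begin by noting that items (1), (2), (3) follow immediately from earlier results: (1) combines Proposition \ref{root injectivity} with the identification $\HH^2(\mathcal C, \mathbb G_m) = \bigoplus_i \HH^2(G_i, \mathbbm k^{\times})$ from Proposition \ref{C G_m cohomology}, (2) is Corollary \ref{root fiber}, and (3) is Proposition \ref{H^3 injectivity}. The real work is in items (4) and (5); I plan to handle (5) first, since its construction feeds back into (4)(c).

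For (5), the strategy is to construct a splitting of $\pi^*: \HH^2(\mathcal C, \mathbb G_m) \hookrightarrow \HH^2(\mathcal G, \mathbb G_m)$ by fibrewise restriction. Each inclusion $BE_i \hookrightarrow \mathcal G$ induces a pullback to $\HH^2(BE_i, \mathbb G_m) = \HH^2(E_i, \mathbbm k^{\times})$ (via the Meier stalk computation), and assembling these yields a map $\rho: \HH^2(\mathcal G, \mathbb G_m) \to \bigoplus_i \HH^2(E_i, \mathbbm k^{\times})$. Functoriality of pullback along the square of stacks $BE_i \to \mathcal G$, $BG_i \to \mathcal C$ then produces a commutative diagram
\[
\begin{tikzcd}
\HH^2(\mathcal C, \mathbb G_m) \arrow[r, "\pi^*"] \arrow[d, equal] & \HH^2(\mathcal G, \mathbb G_m) \arrow[d, "\rho"] \\
\bigoplus_i \HH^2(G_i, \mathbbm k^{\times}) \arrow[r, "\bigoplus \inf"] & \bigoplus_i \HH^2(E_i, \mathbbm k^{\times}),
\end{tikzcd}
\]
where the left identification is Proposition \ref{C G_m cohomology}. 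Given fibrewise sections $s_i$ of each inflation, the composite $(\bigoplus s_i) \circ \rho$ is the desired section of $\pi^*$.

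For (4), the gerbe structure gives central extensions $0 \to \mu_r \to E_i \to G_i \to 0$ classified by $\HH^2(G_i, \mu_r)$, which is simultaneously $|G_i|$- and $r$-torsion and so vanishes under the coprime hypothesis. Hence $E_i \cong \mu_r \times G_i$, so each fiber $BE_i \to BG_i$ is a trivial (in particular root) gerbe, and (2) yields (4)(a). The direct product decomposition also gives a group-theoretic section $\sigma_i: G_i \to E_i$ of the quotient $p_i: E_i \to G_i$; since inflation is $p_i^*$, pullback along $\sigma_i$ retracts it in every degree, giving (4)(b). In degree 2 these same sections feed into the construction in (5), splitting the main sequence and yielding (4)(c). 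Finally, to replace $\HH^1(\mathcal C, \mathbb Z/r)$ by $\HH^1(C, \mathbb Z/r)$, I would run Leray for $f$ with coefficients in $\mathbb Z/r \cong \mu_r$: Proposition \ref{Meier mu_r} identifies the stalks of $\mathbf R^q f_* \mathbb Z/r$ with $\HH^q(G_i, \mu_r)$, which vanish for $q \geq 1$ by coprimality, so $\mathbf R^q f_* \mathbb Z/r = 0$ for $q \geq 1$ and hence $\HH^1(\mathcal C, \mathbb Z/r) = \HH^1(C, \mathbb Z/r)$.

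The main obstacle will be verifying the commutative square in (5) rigorously. This should reduce to functoriality of pullback together with Meier's identification of $\HH^2(BE_i, \mathbb G_m)$ with $\HH^2(E_i, \mathbbm k^{\times})$, but care is required to check that this identification intertwines restriction from $\mathcal G$ to $BE_i$ with the inflation map on the bottom row.
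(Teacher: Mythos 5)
Your proposal is correct, and for items (1), (2), (3), (5), and the coefficient reduction $\HH^1(\mathcal C,\mathbb Z/r)=\HH^1(C,\mathbb Z/r)$ it matches the paper's argument essentially verbatim (the paper also proves (5) via exactly your commutative square built from fibrewise restriction). Where you genuinely diverge is in item (4): the paper proves (4)(a) by chasing the class $[\mathcal G]\in\HH^2(\mathcal C,\mu_r)$ through the Kummer sequence and observing that its image in $\HH^2(\mathcal C,\mathbb G_m)=\bigoplus_i\HH^2(G_i,\mathbbm k^{\times})$ is simultaneously $r$- and $\prod|G_i|$-torsion, and proves (4)(b) by extending the low-degree sequence of the Lyndon--Hochschild--Serre spectral sequence for $0\to\mu_r\to E_i\to G_i\to0$ to exhibit $\ker(\inf)$ in degree $3$ as a quotient of $\HH^1(G_i,\mathbb Z/r)$, which dies by the same coprime-torsion argument. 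You instead observe that the central extension itself is classified by $\HH^2(G_i,\mu_r)$, which vanishes under the coprimality hypothesis, so $E_i\cong\mu_r\times G_i$; the group-theoretic section then retracts inflation in \emph{every} degree at once and makes each fiber a trivial (hence root) gerbe. Your route is arguably cleaner and strictly stronger on the fibers (split injectivity of inflation in all degrees, not just injectivity in degree $3$), at the cost of using the classification of central extensions rather than staying entirely inside the spectral-sequence formalism; it also lets you derive (4)(c) from (5) rather than from the paper's observation that a short exact sequence of groups annihilated by coprime integers splits. Both arguments are complete.
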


\begin{proof}
All that remains to be shown are statements (4) and (5).

For (4),
The Kummer sequence on $\mathcal C$ reads
$$
\HH^1(\mathcal C,\mathbb G_m)\rightarrow\HH^2(\mathcal C,\mu_r)\rightarrow
\HH^2(\mathcal C,\mathbb G_m)\xrightarrow{\cdot r}\HH^2(\mathcal C,\mathbb G_m),
$$
and so the $\mu_r$-gerbe $\mathcal G$ on $\mathcal C$ (corresponding to the class
$[\mathcal G]\in\HH^2(\mathcal C,\mu_r)$)
is a root gerbe if and only if its image in
$\HH^2(\mathcal C,\mathbb G_m)$ is trivial. In Proposition \ref{C G_m cohomology} we have computed that
$$
\HH^2(\mathcal C,\mathbb G_m)=\bigoplus_{i=1}^n\HH^2(G_i,\mathbbm k^{\times}),
$$
and all elements in this group are $\prod|G_i|$-torsion by Proposition \ref{group cohomology torsion}.
But this is coprime to $r$, and all elements of $\HH^2(\mathcal C,\mu_r)$ are $r$-torsion. Therefore
the image of any element of $\HH^2(\mathcal C,\mu_r)$ in $\HH^2(\mathcal C,\mathbb G_m)$
must vanish, and so
$\mathcal G$ is a root gerbe. Thus the sequence
is injective on the left.

To get surjectivity on the right side of our sequence, it suffices to show that the inflation map
$\HH^3(G_i,\mathbbm k^{\times})\rightarrow\HH^3(E_i,\mathbbm k^{\times})$ is injective
for all $i$.
So consider the Lyndon-Hochschild-Serre spectral sequence for
$$
0\rightarrow\mu_r\rightarrow E_i\rightarrow G_i\rightarrow0.
$$
Since $E_2^{0,2}=\HH^0(G_i,\HH^2(\mu_r,\mathbbm k^{\times}))=0$ (by Proposition \ref{cyclic cohomology}),
Lemma
\ref{low-degree} says that we may extend the sequence of low-degree terms to
$$
\HH^1(G_i,\HH^1(\mu_r,\mathbbm k^{\times}))=\HH^1(G_i,\mathbb Z/r)\rightarrow
\HH^3(G_i,\mathbbm k^{\times})\rightarrow\HH^3(E_i,\mathbbm k^{\times})
$$
Note that we \textit{cannot} conclude that $\HH^1(G_i,\mathbb Z/r)=\Hom(G_i,\mathbb Z/r)$, since the action
may be non-trivial. But we \textit{can} conclude that it must be annihilated by both $\left|G_i\right|$ and
$r$ (Proposition \ref{group cohomology torsion}), and so since $\gcd(r,\left|G_i\right|)=1$, we see that it is trivial
and hence we have injectivity of the inflation map.

Therefore we get an exact sequence, and it splits
since both groups are annihilated by coprime integers. Finally, since the stacky structure of
$\mathcal C$ is coprime to $r$,
we see that $\mathbf R^1f_*\mathbb\mu_r=0$  by Proposition \ref{Meier mu_r}
and Proposition \ref{cyclic cohomology}, and so
we may replace $\HH^1(\mathcal C,\mathbb Z/r)$ with $\HH^1(C,\mathbb Z/r)$
(by the sequence of low-degree terms for the Leray spectral sequence for $f:\mathcal C\rightarrow C$
and $\mu_r$).

To see (5), we assume that the sequence is right-exact and that we have sections
$$
\begin{tikzcd}
\HH^2(G_i,\mathbbm k^{\times})\arrow[r, "\pi^*"] & \HH^2(E_i,\mathbbm k^{\times})\arrow[l, dashed,
bend right].
\end{tikzcd}
$$
on each fiber, and we seek to show the sequence splits. But we simply observe there is an induced
splitting from the following diagram
$$
\begin{tikzcd}
\HH^2(\mathcal C,\mathbb G_m)\arrow[r,"\cong"]\arrow[d, hook]
& \displaystyle\bigoplus_{i=1}^n\HH^2(G_i,\mathbbm k^{\times})\arrow[d]\\
\HH^2(\mathcal G,\mathbb G_m)\arrow[u, bend left, dashed, "\exists"]\arrow[r] &
\displaystyle\bigoplus_{i=1}^n\HH^2(E_i,\mathbbm k^{\times})\arrow[u, bend right, dashed].
\end{tikzcd}
$$
\end{proof}

\begin{remark}
We give here a quick description of the left map.

Let $(\alpha_i)\in\bigoplus_{i=1}^n\HH^2(G_i,\mathbbm k^{\times})$.
Since
$$
\HH^2(\mathcal C,\mathbb G_m)=\bigoplus_{i=1}^n\HH^2(G_i,\mathbbm k^{\times})
$$
we know that there is a unique $\mathbb G_m$-gerbe over $\mathcal C$, which we will
call $\mathcal H$, whose
restriction to each $BG_i$ is the $\mathbb G_m$-gerbe $\alpha_i$. Then
the image of $(\alpha_i)$ in $\HH^2(\mathcal G,\mathbb G_m)$ is just
$\pi^*([\mathcal H])$, i.e. $[\mathcal G\times_{\mathcal C}\mathcal H]$.
This is a $\mathbb G_m$-gerbe which is trivial away from the $BE_i$
and whose restriction to each $BE_i$ is given by $\inf\alpha_i\in\HH^2(E_i,\mathbbm k^{\times})$.
\end{remark}

\begin{corollary}\label{weak smooth corollary}
Suppose that $\mathcal G$, $\mathcal C$, and $C$ are as above. Then if
$\mathcal C$ is smooth:
\begin{enumerate}
\item $\mathcal G$ is automatically a root gerbe,
\item the inflation map is
automatically injective, and
\item we have
$$
\HH^2(\mathcal G,\mathbb G_m)=\HH^1(\mathcal C,\mathbb Z/r).
$$
\end{enumerate}
\end{corollary}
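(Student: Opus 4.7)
The plan is to derive all three assertions from Theorem \ref{main Brauer statement} by exploiting the fact that smoothness of $\mathcal C$ forces every stabilizer $G_i$ to be cyclic, which is exactly the hypothesis needed to make the cohomological input to the main theorem collapse.

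First, I would invoke the standard local structure theorem for smooth one-dimensional tame Deligne--Mumford stacks with trivial generic stabilizer over an algebraically closed field: \'{e}tale-locally at each stacky point $p_i$, $\mathcal C$ is isomorphic to $[\mathbb{A}^1/\mu_{n_i}]$, so $G_i\cong \mu_{n_i}$ is cyclic. Combined with Proposition \ref{cyclic cohomology}, which gives $\HH^{2k}(G_i,\mathbbm k^{\times})=0$ for all $k\geq 1$ whenever $G_i$ is cyclic acting trivially on the divisible group $\mathbbm k^{\times}$, this forces $\bigoplus_{i=1}^n\HH^2(G_i,\mathbbm k^{\times})=0$. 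Hence the source of the left map in the main exact sequence of Theorem \ref{main Brauer statement} is zero, the left map is trivially injective, and part (1) of that theorem immediately yields (1) of the corollary.

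For (2), the task is to verify the inflation $\inf:\HH^3(G_i,\mathbbm k^{\times})\to\HH^3(E_i,\mathbbm k^{\times})$ is injective. The natural tool is the Lyndon--Hochschild--Serre spectral sequence for $1\to \mu_r\to E_i\to G_i\to 1$, with $E_2^{p,q}=\HH^p(G_i,\HH^q(\mu_r,\mathbbm k^{\times}))$. Because $\HH^q(\mu_r,\mathbbm k^{\times})=0$ for every even $q\geq 2$, the rows of even $q\geq 2$ vanish entirely, so the edge map out of $E_2^{3,0}$ is obstructed only by the single transgression-type differential $d_2:E_2^{1,1}=\HH^1(G_i,\mathbb Z/r)\to E_2^{3,0}=\HH^3(G_i,\mathbbm k^{\times})$, which is cup product against the extension class $[E_i]\in\HH^2(G_i,\mu_r)$. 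Using the root-gerbe structure supplied by (1) to identify $[E_i]$ with the Bockstein of the restriction $L|_{BG_i}$, one then checks that this cup product vanishes on $\HH^1(G_i,\mathbb Z/r)$, giving the required injectivity.

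With (1) and (2) in hand, Theorem \ref{main Brauer statement}(3) asserts that the right map in the main exact sequence is surjective, so the sequence collapses to
\[
0\to \HH^2(\mathcal G,\mathbb G_m)\to \HH^1(\mathcal C,\mathbb Z/r)\to 0,
\]
yielding the isomorphism in (3). The main obstacle is the vanishing of the transgression $d_2$ used in step (2): all other ingredients are formal consequences of smoothness combined with Proposition \ref{cyclic cohomology}, but tracking the cup product with the root-gerbe extension class through the cyclic group cohomology of $G_i$ is where the real work lies.
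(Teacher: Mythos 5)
Your part (1) is fine (cyclic stabilizers force $\HH^2(\mathcal C,\mathbb G_m)=\bigoplus_i\HH^2(G_i,\mathbbm k^{\times})=0$, which is the same vanishing the paper gets from Theorem \ref{stacky Tsen's}), and your reduction of (2) to the single transgression $d_2\colon E_2^{1,1}=\HH^1(G_i,\mathbb Z/r)\to E_2^{3,0}=\HH^3(G_i,\mathbbm k^{\times})$, given by cup product with $[E_i]\in\HH^2(G_i,\mu_r)$, is also correct. The gap is the step you defer to "one then checks": that cup product does \emph{not} vanish in general, and the root-gerbe hypothesis cannot make it vanish, because for cyclic $G_i$ the root-gerbe condition only constrains the image of $[E_i]$ in $\HH^2(G_i,\mathbbm k^{\times})=0$ and is therefore vacuous. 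Concretely, take $G_i=\mu_r$ and let $[E_i]$ generate $\HH^2(\mu_r,\mu_r)\cong\mathbb Z/r$, so $E_i\cong\mathbb Z/r^2$; by the surjectivity in Proposition \ref{C mu_r cohomology} this fiber is realized by an actual $\mu_r$-gerbe on a smooth stacky curve with one $\mu_r$-point. In your spectral sequence $E_2^{1,1}\cong\mathbb Z/r$ is a subquotient of $\HH^2(\mathbb Z/r^2,\mathbbm k^{\times})=0$, so it must die, and the only differential available is your $d_2^{1,1}$ into $E_2^{3,0}\cong\mathbb Z/r$. Hence $d_2^{1,1}$ is an isomorphism, $E_\infty^{3,0}=0$, and the inflation $\HH^3(\mathbb Z/r,\mathbbm k^{\times})\to\HH^3(\mathbb Z/r^2,\mathbbm k^{\times})$ is the \emph{zero} map. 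So the plan as written cannot be completed.

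The paper takes a different route to (2): since the extension is central and $G_i$ is cyclic, $E_i$ is abelian with a cyclic direct summand $\mathbb Z/N_i$ surjecting onto $\mathbb Z/n_i$, and one composes Propositions \ref{cyclic injection} and \ref{direct sum injection}. You should be aware that your (correct) computation above is in direct tension with Proposition \ref{cyclic injection}: with divisible coefficients there is a dimension shift $\HH^3(-,\mathbbm k^{\times})\cong\HH^4(-,\mathbb Z)$, so the inflation $\HH^3(\mathbb Z/n,\mathbbm k^{\times})\to\HH^3(\mathbb Z/mn,\mathbbm k^{\times})$ is $1\mapsto m^2$ rather than $1\mapsto m$, which is injective only when $\gcd(m,n)=1$. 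In other words, the obstruction you ran into is not an artifact of your method: the example $E_i=\mathbb Z/r^2$ appears to be a genuine counterexample to parts (2) and (3) of the corollary as stated, and neither your argument nor the paper's closes it.
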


\begin{proof}
By Theorem \ref{stacky Tsen's} $\HH^2(\mathcal C,\mathbb G_m)=0$, which implies
that $\mathcal G$ is a root gerbe. Now on each stabilizer we have the following exact sequence
$$
0\rightarrow\mu_r\rightarrow E_i\rightarrow G_i\cong\mathbb Z/n_i\rightarrow0.
$$
Since $\mathcal G$ is a $\mu_r$-banded gerbe the extension is central. Therefore $E_i$ is abelian
-- as it's a central extension of a cyclic group by a cyclic group -- 
and thus must have a direct summand
$\mathbb Z/N_i$ which surjects onto $\mathbb Z/n_i$.
By Propositions \ref{cyclic injection} and \ref{direct sum injection} we then see that
$$
\HH^3(\mathbb Z/n_i,\mathbbm k^{\times})\hookrightarrow\HH^3(\mathbb Z/N_i,\mathbbm k^{\times})\hookrightarrow
\HH^3(E_i,\mathbbm k^{\times}),
$$
and so $\HH^3(\mathcal C,\mathbb G_m)$ injects into $\HH^3(\mathcal G,\mathbb G_m)$.
Therefore
$$
\HH^2(\mathcal G,\mathbb G_m)=\HH^1(\mathcal C,\mathbb Z/r).
$$
\end{proof}

The following is just a combination of the two previous statements, but is of enough importance
to state on its own. See \cite[Proposition 7.10]{Ach24}.
\begin{corollary}
In the case where $\mathcal C$ is smooth and $\gcd(r,\left|G_i\right|)=1$ for
$i=1,\dots,n$, we have
$$
\HH^2(\mathcal G,\mathbb G_m)=\HH^1(C,\mathbb Z/r).
$$
\end{corollary}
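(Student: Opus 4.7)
The plan is to combine Corollary \ref{weak smooth corollary} with the coprimality argument already used in Theorem \ref{main Brauer statement}(4). The first application gives us, from smoothness alone, the identification
$$
\HH^2(\mathcal G,\mathbb G_m)=\HH^1(\mathcal C,\mathbb Z/r),
$$
so the only remaining content is to descend the right-hand side from $\mathcal C$ to its coarse moduli space $C$.

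To do this, I would run the Leray spectral sequence
$$
E_2^{p,q}=\HH^p(C,\mathbf R^qf_*\mathbb Z/r)\implies\HH^{p+q}(\mathcal C,\mathbb Z/r)
$$
and show that $\mathbf R^1f_*\mathbb Z/r=0$. Since $\mathbbm k$ is algebraically closed and $r$ is coprime to $\fieldchar\mathbbm k$ (tameness), we may identify $\mathbb Z/r\cong\mu_r$ on the étale site and apply Proposition \ref{Meier mu_r} to compute the stalks as $\HH^1(G_i,\mu_r(\mathbbm k))=\HH^1(G_i,\mathbb Z/r)$ at the stacky points (and trivially elsewhere). By Proposition \ref{group cohomology torsion} these groups are annihilated by both $|G_i|$ and $r$, hence vanish under the hypothesis $\gcd(r,|G_i|)=1$. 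With $\mathbf R^1f_*\mathbb Z/r=0$, the low-degree exact sequence of the Leray spectral sequence collapses to $\HH^1(\mathcal C,\mathbb Z/r)=\HH^1(C,\mathbb Z/r)$.

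Combining these two identifications yields the stated equality. There is no serious obstacle here: essentially every ingredient has been assembled in the preceding sections, and the argument is a bookkeeping step of plugging the coprimality hypothesis into Proposition \ref{Meier mu_r} to kill the $\mathbf R^1f_*$ term. The one spot that requires momentary care is confirming that the Leray argument for $\mu_r$ applied to $f:\mathcal C\to C$ (as opposed to the gerbe $\pi$) is the correct comparison; this follows immediately from the fact that $f$ is the coarse moduli space map and that its higher direct images are supported on the finite stacky locus.
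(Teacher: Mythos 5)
Your proposal is correct and matches the paper's (essentially unwritten) proof: the paper simply declares the corollary to be a combination of Corollary \ref{weak smooth corollary} and Theorem \ref{main Brauer statement}(4), and the descent step $\HH^1(\mathcal C,\mathbb Z/r)=\HH^1(C,\mathbb Z/r)$ you carry out via $\mathbf R^1f_*\mu_r=0$ is exactly the argument given at the end of the proof of Theorem \ref{main Brauer statement}(4). Your bookkeeping (identifying $\mathbb Z/r\cong\mu_r$, computing stalks via Proposition \ref{Meier mu_r}, and killing them with Proposition \ref{group cohomology torsion} and the coprimality hypothesis) is sound.
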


\newpage

\appendix 
\section{Assorted background}\label{appendix}

This paper relies very heavily on the careful application of both spectral
sequences and group cohomology. Therefore we consider it appropriate to give a brief explanation
of the properties and results invoked above.

\subsection{Group cohomology}
Most of this can be found in a standard reference, such as \cite[Chapter 17]{DF04}
or \cite[Chapter 6]{Wei94}.

For a group $G$ and an abelian group $A$ on which $G$ acts additively
(called a \textit{$G$-module}),
define the \textit{invariant subgroup}
$$
A^G=\{a\in A: ga=a,\ \forall g\in G\}.
$$
Then the cohomology of $G$ with coefficients in $A$, $\HH^n(G,A)$, is the right derived functor
$\mathbf R^n(-^G)(A)$.

\begin{proposition}\label{group cohomology torsion}
Let $G$ be a group and $A$ a $G$-module. If $G$ is $m$-torsion, then so is
$\HH^i(G,A)$ for all $i\geq 1$. If $A$ is $n$-torsion, then so is $\HH^i(G,A)$ for all
$i\geq 0$.
\end{proposition}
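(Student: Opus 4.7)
The plan is to handle the two halves of the proposition by different (and standard) techniques.

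For the second claim, the argument is purely functorial. Multiplication by $n$ on $A$ is a morphism of $G$-modules $A\to A$ (since the action is additive, $g(na)=n(ga)$), so applying the functor $\HH^i(G,-)$ yields multiplication by $n$ on $\HH^i(G,A)$. Since $nA=0$ by hypothesis, the map $n\cdot\id_A$ is zero, and hence the induced map $n\cdot\id_{\HH^i(G,A)}$ is also zero, which is exactly the statement that $\HH^i(G,A)$ is $n$-torsion.

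For the first claim, I would read ``$G$ is $m$-torsion'' in the natural sense for the paper's application, namely that $G$ is a finite group with $|G|=m$ (or with $|G|$ dividing $m$); this is consistent with the later use where $m=\prod|G_i|$. The proof would proceed via the classical transfer-restriction argument: for any subgroup $H\leq G$ of finite index there is a corestriction map $\operatorname{cor}_H^G:\HH^i(H,A)\to\HH^i(G,A)$ satisfying $\operatorname{cor}_H^G\circ\operatorname{res}_H^G=[G:H]\cdot\id$ on $\HH^i(G,A)$. Taking $H=\{e\}$ to be trivial, the target $\HH^i(\{e\},A)$ of the restriction vanishes for all $i\geq 1$, and hence multiplication by $[G:1]=|G|$ factors through zero, annihilating $\HH^i(G,A)$.

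The main technical input is the existence and defining property of the corestriction (transfer), which is standard but non-trivial to build from scratch; I would cite it from \cite[Chapter 6.7]{Wei94} or \cite[Chapter 17]{DF04} rather than reproduce the construction. One subtlety worth flagging: if ``$m$-torsion'' were read in the stronger sense of ``exponent divides $m$'', the statement would fail in general (e.g.\ $G=Q_8$ has exponent $4$ but $\HH^4(Q_8,\mathbb Z)=\mathbb Z/8$), so the order reading is the right one, and it precisely matches how the proposition is invoked in the proof of Theorem \ref{main Brauer statement}.
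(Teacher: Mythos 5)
Your proof is correct and is exactly the standard argument the paper itself relies on by citation (the proposition is stated in the appendix without proof, deferring to \cite{DF04} and \cite{Wei94}): functoriality of multiplication by $n$ for the coefficient statement, and the restriction--corestriction identity $\operatorname{cor}\circ\operatorname{res}=[G:H]\cdot\id$ with $H=\{e\}$ for the group statement. Your reading of ``$m$-torsion'' as referring to the order of $G$ is also the intended one, since the proposition is only ever invoked in the paper with $m=|G_i|$ or $m=\prod|G_i|$, and your $Q_8$ example correctly shows the exponent reading would fail.
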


\begin{proposition}\label{cyclic cohomology}
 Let $\mathbbm k$ be an algebraically closed field. Let $\mu_r$ act trivially
 on $\mathbbm k^{\times}$. Then
 $$
 \HH^{2k-1}(\mu_r,\mathbbm k^{\times})=\Hom(\mu_r,\mathbbm k^{\times})=\mathbb Z/r
 $$
 and
 $$
 \HH^{2k}(\mu_r,\mathbbm k^{\times})=0
 $$
 for $k\geq 1$.
\end{proposition}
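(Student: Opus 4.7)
The plan is to reduce the statement to the classical computation of the cohomology of a finite cyclic group with trivial coefficients via the standard $2$-periodic free resolution. The tame context implicit throughout the paper forces $r$ to be coprime to $\fieldchar\mathbbm{k}$, so $\mu_r = \mu_r(\mathbbm{k})$ is genuinely cyclic of order $r$; fix a generator $\sigma$ and set $G = \mu_r$.

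First, I would invoke the standard $2$-periodic free resolution of the trivial $\mathbb{Z}[G]$-module $\mathbb{Z}$:
$$\cdots \to \mathbb{Z}[G] \xrightarrow{N} \mathbb{Z}[G] \xrightarrow{\sigma-1} \mathbb{Z}[G] \xrightarrow{N} \mathbb{Z}[G] \xrightarrow{\sigma-1} \mathbb{Z}[G] \to \mathbb{Z} \to 0,$$
where $N = 1 + \sigma + \cdots + \sigma^{r-1}$ is the norm element. Applying $\Hom_G(-, \mathbbm{k}^{\times})$ and using that $G$ acts trivially collapses the cochain complex to
$$\mathbbm{k}^{\times} \xrightarrow{0} \mathbbm{k}^{\times} \xrightarrow{x \mapsto x^r} \mathbbm{k}^{\times} \xrightarrow{0} \mathbbm{k}^{\times} \xrightarrow{x \mapsto x^r} \cdots,$$
since $\sigma - 1$ acts as the zero map and $N$ acts as multiplication by $r$ (i.e., as the $r$-th power map in multiplicative notation).

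From here the cohomology can be read off directly. For $k \geq 1$, the group $\HH^{2k-1}(\mu_r, \mathbbm{k}^{\times})$ is the kernel of the $r$-th power map on $\mathbbm{k}^{\times}$, which is precisely $\mu_r(\mathbbm{k}) \cong \mathbb{Z}/r$ (again using algebraic closure and coprimality to $\fieldchar\mathbbm{k}$); this tautologically agrees with $\Hom(\mu_r, \mathbbm{k}^{\times})$, since a homomorphism out of the cyclic group $\mu_r$ is determined by the image of $\sigma$, which must lie in $\mu_r(\mathbbm{k})$. The even-degree group $\HH^{2k}(\mu_r, \mathbbm{k}^{\times})$ is the cokernel $\mathbbm{k}^{\times}/(\mathbbm{k}^{\times})^r$, which vanishes because every element of the algebraically closed field $\mathbbm{k}$ admits an $r$-th root.

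The main obstacle: there is essentially none here; this is textbook material (e.g., \cite[Chapter 6]{Wei94}, cited elsewhere in the paper). The only subtlety worth flagging is the silent identification $\mu_r \cong \mathbb{Z}/r$, which depends on both algebraic closure of $\mathbbm{k}$ and the coprimality of $r$ with $\fieldchar\mathbbm{k}$ — a hypothesis implicit in every place this proposition is applied.
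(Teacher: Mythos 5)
Your proof is correct and is exactly the standard argument the paper has in mind: the paper states this proposition in its appendix without proof, deferring to standard references (\cite[Chapter 17]{DF04}, \cite[Chapter 6]{Wei94}), and the $2$-periodic resolution computation you give is precisely what those references supply, with the odd-degree kernel $\mu_r(\mathbbm k)$ and the even-degree cokernel $\mathbbm k^{\times}/(\mathbbm k^{\times})^r=0$ read off correctly. Your flag that the identification $\mu_r(\mathbbm k)\cong\mathbb Z/r$ needs $r$ coprime to $\fieldchar\mathbbm k$ is a worthwhile observation, and that hypothesis is indeed guaranteed by the paper's standing tameness convention.
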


\begin{proposition}\label{cyclic injection}
Let $\mathbb Z/n$ and $\mathbb Z/mn$ act trivially on $\mathbbm k^{\times}$. Then
$$
\inf:\HH^3(\mathbb Z/n,\mathbbm k^{\times})\rightarrow
\HH^3(\mathbb Z/mn,\mathbbm k^{\times})
$$
is given by the canonical injection $1\mapsto m$.
\end{proposition}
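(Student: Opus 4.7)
The plan is to reduce the computation to the degree-$1$ case via the $2$-periodicity of cyclic group cohomology, where inflation is immediately computable, and then transport the formula to $\HH^3$ through the identification of Proposition \ref{cyclic cohomology}.

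First, I would handle $\HH^1$ directly. Under the canonical identification $\HH^1(\mathbb Z/k,\mathbbm k^{\times})=\Hom(\mathbb Z/k,\mathbbm k^{\times})=\mu_k$, and under $\mu_k=\mathbb Z/k$ arising from a choice of compatible primitive roots $\zeta_n=\zeta_{mn}^{m}$, inflation sends a character $\phi:\mathbb Z/n\to\mathbbm k^{\times}$ to its precomposition $\phi\circ\pi$ with the quotient $\pi:\mathbb Z/mn\twoheadrightarrow\mathbb Z/n$. This is precisely the canonical inclusion $\mu_n\hookrightarrow\mu_{mn}$, which becomes $1\mapsto m$ after translating through the identifications $\mu_k=\mathbb Z/k$.

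Second, I would extend the formula to $\HH^3$ by invoking the identification $\HH^{2k-1}(\mu_r,\mathbbm k^{\times})=\Hom(\mu_r,\mathbbm k^{\times})$ from Proposition \ref{cyclic cohomology}. Since that identification is constructed to be natural in cyclic surjections, the formula from the $\HH^1$ case propagates directly to $\HH^3$, yielding $1\mapsto m$. As a backup and to pin down the naturality, I would give an explicit chain-level argument using the standard $2$-periodic Tate resolution $\cdots\to\mathbb Z[\mathbb Z/k]\xrightarrow{N_k}\mathbb Z[\mathbb Z/k]\xrightarrow{t-1}\mathbb Z[\mathbb Z/k]\to\mathbb Z\to 0$: constructing a chain map between the Tate resolutions for $\mathbb Z/mn$ and $\mathbb Z/n$ lifting the surjection and tracking the induced map on $3$-cocycles through the identification of Proposition \ref{cyclic cohomology}.

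The main obstacle will be verifying the naturality of the identification $\HH^{2k-1}=\Hom$ for $k>1$ in Proposition \ref{cyclic cohomology} under inflation. Unlike the degree-$1$ case, this identification is not entirely canonical and depends on a choice of periodicity generator; one must confirm that the specific choice made in the paper is compatible with surjections of cyclic groups so that the clean formula $1\mapsto m$ emerges without any extra powers of $m$ creeping in from the periodicity isomorphism.
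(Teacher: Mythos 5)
Your reduction to $\HH^1$ is fine, but the step where you transport the answer to $\HH^3$ through the periodicity isomorphism is exactly where the argument breaks, and the obstacle you flag in your last paragraph is not a technicality that resolves favorably --- it is a genuine obstruction. The periodicity isomorphism $\HH^1(\mathbb Z/N,\mathbbm k^{\times})\cong\HH^3(\mathbb Z/N,\mathbbm k^{\times})$ is cup product with a generator $u_N$ of $\HH^2(\mathbb Z/N,\mathbb Z)\cong\mathbb Z/N$, and this class is \emph{not} preserved by inflation along $\mathbb Z/mn\twoheadrightarrow\mathbb Z/n$: identifying $\HH^2(\mathbb Z/N,\mathbb Z)$ with $\Hom(\mathbb Z/N,\mathbb Q/\mathbb Z)$ via the Bockstein, the generator $1\mapsto 1/n$ pulls back to the character $1\mapsto 1/n=m\cdot(1/mn)$, i.e.\ to $m$ times the generator, so $\inf u_n=m\,u_{mn}$. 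Since inflation is a ring homomorphism, combining this with your (correct) degree-one computation gives
$$
\inf\colon\HH^3(\mathbb Z/n,\mathbbm k^{\times})\rightarrow\HH^3(\mathbb Z/mn,\mathbbm k^{\times}),
\qquad 1\mapsto m\cdot m=m^2,
$$
not $1\mapsto m$.

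This affects the statement itself, not just your proof: the map $1\mapsto m^2$ from $\mathbb Z/n$ to $\mathbb Z/mn$ fails to be injective whenever $\gcd(m,n)>1$. Concretely, for $n=m=2$ the Lyndon--Hochschild--Serre spectral sequence of $0\to\mathbb Z/2\to\mathbb Z/4\to\mathbb Z/2\to0$ with coefficients in $\mathbbm k^{\times}$ forces $d_2^{1,1}\colon E_2^{1,1}\to E_2^{3,0}$ to be an isomorphism of groups of order two (because $\HH^2(\mathbb Z/4,\mathbbm k^{\times})=0$ while $E_2^{1,1}=\mathbb Z/2$ and $E_2^{0,2}=0$), so $E_\infty^{3,0}=0$ and the inflation $\HH^3(\mathbb Z/2,\mathbbm k^{\times})\to\HH^3(\mathbb Z/4,\mathbbm k^{\times})$ is the zero map rather than the claimed injection $1\mapsto 2$. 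The paper's own proof cites the classical formula $1\mapsto m^{\lfloor k/2\rfloor}$, but that formula is for integral coefficients; since $\HH^3(G,\mathbbm k^{\times})\cong\HH^4(G,\mathbb Z)$ for these groups, the correct exponent in degree three with $\mathbbm k^{\times}$-coefficients is $\lceil 3/2\rceil=2$. So you were right to worry that an extra power of $m$ might creep in from the periodicity isomorphism; one does, and it changes the answer.
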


\begin{proof}
More generally, the map
$$
\HH^k(\mathbb Z/n,\mathbbm k^{\times})\rightarrow
\HH^k(\mathbb Z/mn,\mathbbm k^{\times})
$$
is given by $1\mapsto m^{\left \lfloor{\frac k2}\right \rfloor }$ \cite[XIII Ex. 1.1]{Ser79},
which for $k=3$ is $1\mapsto m$.
\end{proof}

\begin{proposition}\label{direct sum injection}
If $\mathbb Z/a$ and $\mathbb Z/b$ both act trivially on $\mathbbm k^{\times}$, then
$$
\HH^k(\mathbb Z/a,\mathbbm k^{\times})\hookrightarrow\HH^k(\mathbb Z/a\oplus\mathbb Z/b,
\mathbbm k^{\times})
$$
for all $k\geq 1$.
\end{proposition}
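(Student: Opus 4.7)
The plan is to exploit the fact that the extension
$$
0\rightarrow\mathbb Z/b\rightarrow\mathbb Z/a\oplus\mathbb Z/b\xrightarrow{q}\mathbb Z/a\rightarrow0
$$
is canonically split: the projection $q$ onto the first factor admits the inclusion $i:\mathbb Z/a\hookrightarrow\mathbb Z/a\oplus\mathbb Z/b$ as a section, so that $q\circ i=\id_{\mathbb Z/a}$. Since all modules in sight are $\mathbbm k^{\times}$ with trivial action, applying group cohomology (contravariantly in the group) to this splitting is unambiguous, and I would take the injection in the proposition to be the induced inflation map $q^*$.

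First, I would write down the two induced maps $q^*=\inf:\HH^k(\mathbb Z/a,\mathbbm k^{\times})\rightarrow\HH^k(\mathbb Z/a\oplus\mathbb Z/b,\mathbbm k^{\times})$ and $i^*=\res:\HH^k(\mathbb Z/a\oplus\mathbb Z/b,\mathbbm k^{\times})\rightarrow\HH^k(\mathbb Z/a,\mathbbm k^{\times})$. Then functoriality of group cohomology (here simply the composition of maps of standard resolutions, or equivalently the fact that $\HH^k(-,\mathbbm k^{\times})$ is a functor on the category of groups acting trivially on $\mathbbm k^{\times}$) yields
$$
\res\circ\inf=(q\circ i)^*=(\id_{\mathbb Z/a})^*=\id_{\HH^k(\mathbb Z/a,\mathbbm k^{\times})}.
$$
A map with a left inverse is injective, so $\inf$ is injective for every $k\geq 0$, which is stronger than what we need.

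There is no real obstacle here: the argument is purely formal and avoids any calculation or spectral sequence. The only subtlety worth flagging is that, unlike the situation in Proposition \ref{cyclic injection}, we are not claiming a canonical factor in front of the generator; we are only claiming split injectivity. If one instead wanted an explicit formula, one could invoke the Künneth theorem for group cohomology to write
$$
\HH^k(\mathbb Z/a\oplus\mathbb Z/b,\mathbbm k^{\times})\supseteq\HH^k(\mathbb Z/a,\mathbbm k^{\times})\otimes\HH^0(\mathbb Z/b,\mathbbm k^{\times})=\HH^k(\mathbb Z/a,\mathbbm k^{\times}),
$$
but for the bare injectivity statement needed in Corollary \ref{weak smooth corollary} the split-extension argument above is the cleanest path.
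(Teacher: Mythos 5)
Your argument is correct and complete: the projection $q:\mathbb Z/a\oplus\mathbb Z/b\rightarrow\mathbb Z/a$ and the inclusion $i$ of the first summand satisfy $q\circ i=\id$, so contravariant functoriality of $\HH^k(-,\mathbbm k^{\times})$ gives $i^*\circ q^*=\id$, and hence the inflation map $q^*$ is split injective in every degree. The paper states Proposition \ref{direct sum injection} without proof, so there is no argument to compare against, but this retraction argument is the standard (and almost certainly intended) one, and it is cleaner than invoking any explicit computation of the cohomology of abelian groups. One caveat: your parenthetical K\"unneth remark is not correct as written, since $\HH^k(\mathbb Z/a,\mathbbm k^{\times})\otimes\HH^0(\mathbb Z/b,\mathbbm k^{\times})=\HH^k(\mathbb Z/a,\mathbbm k^{\times})\otimes\mathbbm k^{\times}$ vanishes for odd $k\geq 1$ (there $\HH^k(\mathbb Z/a,\mathbbm k^{\times})\cong\mathbb Z/a$ is killed by tensoring with the divisible group $\mathbbm k^{\times}$), and a K\"unneth formula for $\mathbbm k^{\times}$-coefficients would in any case require more care than for field coefficients. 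Since that aside plays no role in your proof, the argument stands as is.
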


\subsection{Spectral Sequences}
A standard reference for this section is \cite[Chapter 5]{Wei94}.

\begin{definition}
A \textit{spectral sequence} consists of the following data
\begin{enumerate}
\item \textit{sheets} $E_r$ for $r\geq r_0$,
\item for all integers $p,q$, objects $E_r^{p,q}$, and
\item maps $d_r:E_r^{p,q}\rightarrow E_r^{p+r,q-(r-1)}$ such that $d_r\circ d_r=0$ and
$E_{r+1}^{p,q}$ is the homology of $E_r$ at $(p,q)$.
\end{enumerate}
\end{definition}

In practice, many spectral sequences (including the ones used in this paper), begin at the $E_2$ sheet.
One then generally uses the $E_2$ sheet to calculate the subsequent sheets.

In this section we will
assume that all spectral sequences are \textit{first quadrant}, meaning the only non-zero terms have
$p,q\geq0$. Note that any first quadrant spectral sequence must eventually stabilize in the sense that there
is some $N$ such that
$E_r^{p,q}=E_N^{p,q}$ for all $r\geq N$. We call this stable value $E_{\infty}^{p,q}$.

We say that a first quadrant spectral sequence \textit{converges} to $\HH^*$ for some collection of objects
$(\HH^n)$ if for each $n$ there is a filtration
$$
0\subseteq F^n\HH^n\subseteq F^{n-1}\HH^n\subseteq\dots\subseteq F^0\HH^n=\HH^n
$$
such that
$$
E_{\infty}^{p,q}\cong F^p\HH^{p+q}/F^{p+1}\HH^{p+q}.
$$
When such a filtration exists, we denote this by
$$
E_r^{p,q}\implies\HH^{p+q}.
$$

Convergent first quadrant spectral sequences always have a \textit{sequence
of low-degree terms}.

\begin{lemma}[Sequence of low-degree terms]\label{low-degree}
If
$$
E_2^{p,q}\implies\HH^{p+q}(X)
$$
is a first quadrant spectral sequence, then there is an exact sequence
$$
0 \rightarrow E_2^{1,0}\rightarrow\HH^1(X)\rightarrow E_2^{0,1}\rightarrow
E_2^{2,0}\rightarrow\ker\left[\HH^2(X)\rightarrow E_2^{0,2}\right]\rightarrow
E_2^{1,1}\rightarrow E_2^{3,0}.
$$
This can be extended an additional term to $\HH^3(X)$ if $E_2^{0,2}=0$.
\end{lemma}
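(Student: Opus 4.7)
The plan is to exploit the defining property of convergence: there is a decreasing filtration $F^\bullet\HH^n$ on each $\HH^n$ whose successive quotients are $E_\infty^{p,q} = F^p\HH^{p+q}/F^{p+1}\HH^{p+q}$. First I would compute $E_\infty^{p,q}$ explicitly for each of the small bi-degrees that appear in the target sequence, using the first-quadrant hypothesis to rule out all but a handful of incoming and outgoing differentials. Concretely, $E_\infty^{1,0} = E_2^{1,0}$ (no nontrivial differentials touch this position from either side), $E_\infty^{0,1} = \ker(d_2 \colon E_2^{0,1}\to E_2^{2,0})$, $E_\infty^{2,0} = \operatorname{coker}(d_2 \colon E_2^{0,1}\to E_2^{2,0})$, $E_\infty^{1,1} = \ker(d_2 \colon E_2^{1,1}\to E_2^{3,0})$, and $E_\infty^{0,2}$ sits as a subgroup of $E_2^{0,2}$, namely the intersection of the kernels of $d_2$ and (after passing to the $E_3$-page) $d_3$.

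Next I would assemble the short exact sequences coming from the filtrations on $\HH^1$ and $\HH^2$, namely
$$0 \to E_2^{1,0} \to \HH^1 \to E_\infty^{0,1}\to 0,$$
$$0 \to F^2\HH^2 \to F^1\HH^2 \to E_\infty^{1,1} \to 0, \quad 0 \to F^1\HH^2 \to \HH^2 \to E_\infty^{0,2}\to 0,$$
with $F^2\HH^2 = E_\infty^{2,0}$. Splicing these against the explicit descriptions of the $E_\infty$-terms above, and using that the injection $E_\infty^{0,2}\hookrightarrow E_2^{0,2}$ forces $F^1\HH^2 = \ker[\HH^2\to E_2^{0,2}]$, yields the required seven-term sequence. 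Exactness at each position is then a direct diagram chase from the construction of the various maps.

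For the extension to $\HH^3$ under the hypothesis $E_2^{0,2}=0$: this forces $\ker[\HH^2\to E_2^{0,2}] = \HH^2$, so the middle term of the sequence becomes $\HH^2$ itself. It then remains to append $E_2^{3,0}\to \HH^3$ with exactness at $E_2^{3,0}$. This map factors as $E_2^{3,0}\twoheadrightarrow E_\infty^{3,0} = F^3\HH^3 \hookrightarrow \HH^3$, and its kernel is the sum of the images of $d_2 \colon E_2^{1,1}\to E_2^{3,0}$ and $d_3 \colon E_3^{0,2}\to E_3^{3,0}$. The hypothesis $E_2^{0,2}=0$ forces $E_3^{0,2}=0$, which kills the second image and leaves precisely the image of $d_2$, giving exactness.

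The main obstacle is purely bookkeeping: keeping track of which page each subquotient lives on, and verifying that the maps in the assembled sequence are precisely the ones induced by the filtration (rather than some twisted variant). No deep input is needed beyond the filtration-and-graded-pieces paradigm together with first-quadrant vanishing of differentials.
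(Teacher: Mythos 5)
Your proposal is correct and follows essentially the same route as the paper: the standard seven-term sequence via the filtration and explicit $E_\infty$-terms (which the paper cites as standard), and the extension to $\HH^3(X)$ by factoring $E_2^{3,0}\twoheadrightarrow E_\infty^{3,0}=F^3\HH^3\hookrightarrow\HH^3(X)$ and observing that $E_2^{0,2}=0$ forces $E_3^{0,2}=0$, so the only contribution to the kernel is the image of $d_2\colon E_2^{1,1}\to E_2^{3,0}$. This is exactly the paper's argument.
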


\begin{proof}
This is standard, but we will give an explanation of the extension to $\HH^3(X)$.
The filtration on $\HH^3(X)$ associated to the spectral sequence is
$$
0\subseteq F^3\HH^3\subseteq F^2\HH^3\subseteq F^1\HH^3\subseteq F^0\HH^3=\HH^3(X),
$$
where $F^i\HH^3/F^{i+1}\HH^3\cong E_{\infty}^{i,3-i}$.
Therefore there is a canonical injection $E_{\infty}^{3,0}\hookrightarrow\HH^3(X)$.
Exactness of our desired extension says that we need a morphism $E_2^{3,0}\rightarrow\HH^3(X)$
whose kernel is precisely the image of $E_2^{1,1}$.

Note that
$$
E_3^{3,0}=\frac{\ker(E_2^{3,0}\rightarrow E_2^{5,-1}=0)}{\im(E_2^{1,1}\rightarrow E_2^{3,0})}
$$
and so we have exactness of the extension as long as $E_3^{3,0}=E_{\infty}^{3,0}$. The obstruction
to this happening is whether or not $E_4^{3,0}=E_3^{3,0}$. Since
$$
E_4^{3,0}=\frac{\ker(E_3^{3,0}\rightarrow E_3^{6,-2}=0)}{\im(E_3^{0,2}\rightarrow E_3^{3,0})},
$$
we see that $E_4^{3,0}=E_3^{3,0}$ if and only if the image of
$E_3^{0,2}$ is trivial in $E_3^{3,0}$. In particular, we get equality and hence
a further extension of the sequence if $E_2^{0,2}=0$.
\end{proof}

We will make
extensive use of the Grothendieck spectral sequence, the Leray spectral sequence,
and the Lyndon-Hochschild-Serre spectral sequence.
Our statement of the Grothendieck spectral sequence is only for the context in which we will use it (pushforward
of abelian sheaves on stacks), though it applies in much larger generality.

\begin{definition}
For morphisms $f:\mathcal X\rightarrow\mathcal Y$ and $g:\mathcal Y\rightarrow\mathcal Z$
of stacks and an abelian sheaf $F$ on $\mathcal X$,
the \textit{Grothendieck spectral sequence} is
$$
E_2^{p,q}=\mathbf R^pg_*(\mathbf R^qf_*F)\implies
\mathbf R^{p+q}(g\circ f)_* F.
$$
\end{definition}

\begin{definition}
For a morphism $f:\mathcal X\rightarrow\mathcal Y$ of stacks and an abelian sheaf $F$ on $\mathcal X$, the
\textit{Leray spectral sequence} is
$$
E_2^{p,q}=\HH^p(\mathcal Y,\mathbf R^qf_*F)\implies\HH^{p+q}(\mathcal X,F).
$$
\end{definition}

\begin{definition}
For a group $G$ with normal subgroup $N$ and $G$-module $A$, the \textit{Lyndon-Hochschild-Serre}
spectral sequence is
$$
E_2^{p,q}=\HH^p(G/N,\HH^q(N,A))\implies\HH^{p+q}(G,A).
$$
\end{definition}

\newpage

\newcommand{\etalchar}[1]{$^{#1}$}

\end{document}